\documentclass[a4paper,11pt]{amsart}
\usepackage{graphicx}
\usepackage{amsmath}
\usepackage{amsfonts}
\usepackage{tikz-cd}
\usepackage{mathrsfs}
\usepackage{multirow}
\usepackage{multicol}
\usepackage{float}
\usepackage{caption}
\usepackage{amssymb}
\usepackage{tikz}
\usepackage{xcolor}
\usepackage{hyperref}
\usepackage{faktor}

\usepackage[
  a4paper,
  left=2.5cm,
  right=2.5cm,
  top=3cm,
  bottom=3cm
]{geometry}

\newtheoremstyle{introductionplain}
  {\topsep}     
  {\topsep}     
  {\itshape}    
  {0pt}        
  {\bfseries}  
  {.}          
  {0.5em}      
  {}           

\theoremstyle{introductionplain}
\newtheorem*{definition*}{Definition}
\newtheorem*{theorem*}{Theorem}
\newtheorem*{proposition*}{Proposition}
\newtheorem*{corollary*}{Corollary}
\theoremstyle{plain}

\usepackage{needspace}

\newtheorem{theorem}{Theorem}[section]

\usepackage{algpseudocode}

\newtheorem{corollary}[theorem]{Corollary}

\newtheorem{definition}[theorem]{Definition}
\newtheorem{example}[theorem]{Example}

\newtheorem{lemma}[theorem]{Lemma}
\newtheorem{notation}[theorem]{Notation}

\newtheorem{proposition}[theorem]{Proposition}

\newtheorem{remark}[theorem]{Remark}

\newtheorem{observation}[theorem]{Observation}

\newenvironment{definitionA}
  {\par\medskip\noindent\textbf{Definition 1.}\itshape}
  {\par\medskip}

  \newenvironment{definitionB}
  {\par\medskip\noindent\textbf{Definition 2.}\itshape}
  {\par\medskip}

\newenvironment{theoremA}
  {\par\medskip\noindent
   \textbf{Theorem A.} (Theorem \ref{thm defect})\ \itshape}
  {\par\medskip}

  \newenvironment{theoremB}
  {\par\medskip\noindent\textbf{Theorem B.} (Theorem \ref{teorema formula}) \itshape}
  {\par\medskip}

   \newenvironment{theoremC}
  {\par\medskip\noindent\textbf{Theorem C.} (Theorem \ref{teorema cuadrado}) \itshape}
  {\par\medskip}

    \newenvironment{theoremD}
  {\par\medskip\noindent\textbf{Theorem D.} (Proposition \ref{prop:cota})\itshape}
  {\par\medskip}

\newcommand\CC{{\mathcal C}}

\newcommand\Tan{\operatorname{Tan}}

\title{Degrees and Directional Defects of Embedded Algebraic Vector Bundles}

\author{Leonardo Lanciano}

\date{}

\address{ Universidad de Buenos Aires, Facultad de Ciencias Exactas y Naturales,
Departamento de Matemática, Buenos Aires, Argentina.}
\email{llanciano@dm.uba.ar}

\begin{document}
\begin{abstract}
We develop a unified defect theory for embedded algebraic vector
bundles which places several classical constructions within a common
framework. The theory recovers tangential, dual, join, and secant
defects. We first prove an effective numerical criterion characterizing defectivity by the vanishing of a bidegree. This also allows us to recover the exact defect from the vanishing pattern of the bidegrees, extending a theorem of Holme. Using van der Waerden's theorem on bidegrees, we obtain a formula that decomposes the geometric degree of an embedded vector bundle into contributions
from the direction varieties of its general linear restrictions. This
leads to a characterization of embedded algebraic vector bundles of minimal degree.

As a main application, we establish the sharp universal bound $\deg(TV)\leq\deg(V)^2$
for every smooth irreducible affine variety \(V \subseteq \mathbb{A}^n\) and $TV\subseteq \mathbb{A}^{2n}$ its tangent bundle. Finally, under suitable
regularity assumptions, we apply the quadratic bound to prolongation
varieties of differential algebraic systems, obtaining uniform degree estimates
and thereby providing a partial answer to an open problem in
differential algebra posed by Pogudin.
\end{abstract}
\maketitle
\begin{small}{\textbf{Keywords.}
Vector bundles, Defectivity, Geometric degree, Prolongation varieties, Tangential variety.}
\end{small}

\section{Introduction}
The geometry of varieties swept out by linear spaces is a classical and relevant topic in algebraic geometry. Tangential varieties, projective dual varieties, join and secant varieties are notable examples of varieties of this kind. These arise naturally in the study of projective embeddings, projections and incidence problems. In particular, the study of tangential varieties played a fundamental role in Zak's remarkable contribution on Hartshorne's conjecture on linear normality (see \cite[Corollary 3]{Zak1983} and \cite[Conjecture 4.2]{Hartshorne1974}).

The origins of these ideas can be traced back at least to the works of Terracini \cite{Terracini1911} and Severi \cite{Severi1902}. The theory was subsequently developed from several different perspectives. Secant and tangential varieties were systematically studied for instance in \cite[Chapters 1-2]{Zak1993}. On the other hand, projective dual varieties were studied using Chern classes where several formulas for their dimension and degrees were obtained by Holme \cite{Holme1979DualSmooth,Holme1988Duality} (see \cite{GKZ1994} for a modern approach). Lastly, joins were extensively studied in \cite[Chapter 4]{flenner1999joins}. In recent years, these classical constructions have received renewed attention, with much of the literature focusing on the explicit computations of dimensions and degrees of tangential or secant varieties in particular families such as Segre-Veronese varieties (see for instance \cite{HernandezGomezRusso2026,ChiantiniCiliberto2002,ChiantiniCiliberto2010,AboVannieuwenhoven2018,BernardiEtAl2018,ChiantiniCoppens2001,Russo2016} and the references in there).

Across all these settings, one of the central questions is whether the relevant family of linear spaces (for instance tangent spaces or secant lines) sweeps out a variety having the dimension predicted by the natural parameter count. When the answer to that question is negative, the notion of \emph{defectivity} arises. Despite the extensive literature devoted to the individual constructions mentioned above, defectivity itself has rarely been studied from such a general point of view.

We start by introducing a general notion of defectivity following \cite{MezzettiTommasi2004,DePoiMezzetti2007,ChiantiniCiliberto2010} and \cite[Section 4.2.3]{EisenbudHarris2016}.

\medskip

\begin{definitionA}
   Let $X \subseteq \mathbb{P}^m$ be an irreducible projective variety.
    \begin{itemize}
        \item  We say $X$ is \emph{swept out by linear spaces} if there exist integers $n \in \mathbb{N}$, $k \in \mathbb{N}_0$, an irreducible quasi-projective variety $B \subseteq \mathbb{P}^n$ and a regular map $\gamma_X: B \rightarrow \mathbb{G}(m+1,k+1)$ such that: $$X = \overline{\left \{ [v] \in \mathbb{P}^m \mid \exists \ p \in B , v \in \gamma_X(p) \setminus \{0\}\right \}} = \overline{\bigcup_{p \in B} \mathbb{P}(\gamma_X(p))},$$ where the bars denote the Zariski closure and $\mathbb{G}(m,r)$ denotes the Grassmannian of $r$-dimensional vector subspaces of $\mathbb A^m$
        \item  We define the defect of \((X,\gamma_X)\), by
$$
\delta(X,\gamma_X):=
\min\{\dim B+k,m\}-\dim X.
$$
    \end{itemize}

\end{definitionA}

Most available techniques for detecting defectivity are restricted to a specific geometric situation and exploit the particular structure of the variety under consideration. For instance, in the setting of dual varieties, defectivity can be detected through the vanishing of suitable Chern classes (see, for example, \cite{Holme1988Duality,GKZ1994}) or polar degrees (see \cite{https://doi.org/10.1112/blms.12379,KOHN2021157,HOLME2001363}). Nevertheless, to our knowledge, there is no general systematic framework that turns the defectivity problem for an arbitrary family of linear spaces into an effective computation.

\medskip

The first aim of this paper is to develop a common numerical
framework for these defectivity problems through embedded
algebraic vector bundles. We begin by introducing the basic objects and invariants motivated by \cite{MezzettiTommasi2002,DePoiMezzetti2007}, \cite[II, Exercise 5.18]{hartshorne1977algebraic} and \cite[Remark 3.4]{https://doi.org/10.1112/blms.12379}. 
 
\medskip

\begin{definitionB}
Let \(V\subseteq\mathbb A^n\) be an irreducible algebraic variety of dimension \(d\), and let
\(E\subseteq V\times\mathbb A^m\) be an algebraic variety.

\begin{enumerate}
\item We say that \(E\) is an {embedded algebraic vector bundle of rank \(r\) over \(V\)} if the canonical first projection $\pi_1:E \rightarrow V$ is Zariski locally trivial and the fibers are of the form $\pi_1^{-1}(p):=\{p\} \times L_p$ where $L_p$ is an \(r\)-dimensional linear subspace of \(\mathbb A^m\) for every $p \in V$.

\item The {variety of directions} of \(E\), $\mathscr{D}(E) \subseteq \mathbb{A}^m,$ is the variety swept out by its fibers:
\[
\mathscr{D}(E):=\overline{\bigcup_{p\in V}L_p}\subseteq\mathbb A^m.
\]
\item The Grassmann classifying map of $E$ is
\[
 \Gamma_E:V\longrightarrow\mathbb{G}(m,r),
 \qquad p\longmapsto L_p,
\]
\item The {directional defect} of \(E\) and the {Grassmann} defect of $E$ are:
\[
\Delta(E):=\min\{d+r,m\}-\dim\mathscr{D}(E), \qquad \Delta_G(E):=d-\dim\overline{\Gamma_E(V)} .
\]

\end{enumerate}
\end{definitionB}

\medskip

The geometric meaning of these two defects is complementary. The directional defect measures the failure of the fibers of \(E\) to sweep out a variety of the expected dimension, while the Grassmann defect measures the failure of the fibers as points in the Grassmannian to vary with the expected number of parameters. The Grassmann classifying map is a natural generalization of the
Gauss map to vector bundles and has been studied from different perspectives in the literature (see \cite{MezzettiTommasi2004,Ran1984,Ran2024}).

On the other hand, the varieties of directions $\mathscr{D}(E)$ are the affine analogues from the vector bundle point of view, of varieties $X \subseteq \mathbb{P}^n$ swept out by linear spaces. In particular, it is easy to see that the defect theories from Definitions $1$ and $2$ agree in the sense that there is a correspondence between varieties swept out by linear spaces and embedded algebraic vector bundles that sends $\delta(X,\gamma_X)$ to $\Delta(E)$ and $\gamma_X$ to $\Gamma_E$ (see Proposition \ref{defectia}). This leads to the following natural question:
$$\textit{What is gained by translating the problem of defectivity into this language?}$$

The geometry behind varieties swept out by linear spaces can be
viewed as a projection problem, where the incidence information
is often forgotten. From the point of view of equations, this is exactly an
\emph{elimination} problem, which is algebraically difficult even
in specific examples (see \cite{LandsbergOttaviani2013,OedingRaicu2014}) and can also be computationally very expensive. The vector bundle point of view preserves the incidence structure and when restricted to the classical examples (e.g. secants, tangents and duals) the corresponding vector bundle has \emph{concrete} equations we can easily obtain from the equations of $V$ and hence, avoids this elimination step. 

By this means, the problem of defectivity can be translated into an effective numerical criterion (see Theorem A below). In the conormal case, Theorem~A recovers the polar-degree
vanishing criterion of Holme recalled in
\cite[Theorem~3.3]{https://doi.org/10.1112/blms.12379}.
Jorgenson uses this criterion, together with additional
numerical constraints provided by Huh (see \cite[Theorem 21]{Huh2012}), to establish several cases of the duality
defect conjecture posed through explicit computations.
Our framework provides a common setting for developing analogous
approaches to other defectivity problems using techniques from
polynomial equation solving (see Remark \ref{remark approach}). These include the
Abo-Ottaviani-Peterson conjecture on secant varieties of Segre
varieties (see \cite{AboOttavianiPeterson2009}) and the
Baur-Draisma-de Graaf conjecture on secant varieties of
Grassmannians (see \cite{BaurDraismaDeGraaf2007}).

\subsection{Main Results} \ 

\medskip

The paper contains three types of results. The first consists of general theorems on embedded algebraic vector bundles, their geometric degree, and their defects. The second concerns applications of these general results to classical constructions. In particular, we obtain new optimal degree bounds for the geometric degree of the tangent bundle of a smooth algebraic variety, giving a positive answer to a previously posed conjecture (see \cite[Question 33]{jeronimo2025geometric}). Finally, these estimates are applied to give a partial answer to a conjecture posed by Pogudin concerning the degree of the prolongation variety (see \cite{PogudinOpenProblems2025}).

\subsubsection{Degree formulas and numerical criteria} 
\ \medskip

We first obtain an effective numerical criterion for directional defectivity applying \cite[Theorem A]{CASTILLO2020107382} to the biprojective interpretation (see Definition \ref{def X'}) of the projective closure of $E$:

\medskip

\begin{theoremA}\
Let $V\subseteq\mathbb A^n$ be an irreducible algebraic variety of dimension $d$, and let $E\subseteq V\times\mathbb A^m$ be an embedded algebraic vector bundle of rank $r>0$. Let $\overline{E}'$ be the biprojective interpretation of its projective closure, and set $c:=\max\{d+r-m,0\}.$ Then
\[
\Delta(E)>0
\quad\Longleftrightarrow\quad
\deg_{c,d+r-1-c}(\overline{E}')=0,
\]
where $\deg_{c,d+r-1-c}(\overline{E}')$ denotes the bidegree of the biprojective variety $\overline{E}'$ (see Definitions \ref{def X'} and \ref{def:bidegree}).
\end{theoremA}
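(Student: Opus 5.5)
We need to guess the setup. $\overline{E}'$ is presumably a subvariety of $\mathbb{P}^n\times\mathbb{P}^{m-1}$, or perhaps of $\mathbb{P}^n \times \mathbb{P}^m$. Its dimension is $d+r-1$: the projectivized fiber lines give dimension $d+(r-1)$. The second projection has image $\mathbb{P}(\mathscr{D}(E))$, since $\mathscr{D}(E)$ is a cone. The cited result \cite[Theorem A]{CASTILLO2020107382} says that for an irreducible $X\subseteq \mathbb{P}^{n}\times\mathbb{P}^{m'}$, the bidegree $\deg_{a,b}(X)$ is nonzero exactly when $a,b$ lie in a range determined by the dimensions of the projections; in particular the largest $b$ with nonzero bidegree equals $\dim \pi_2(X)$.

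The plan is to reduce Theorem A to this support description, keeping track of the convention for the indices $(c,d+r-1-c)$. I take it as: the second index is paired with hyperplanes from the second factor $\mathbb{P}^{m-1}$, so that $\deg_{a,b}(X)=\#\,X\cap L_1\cap L_2$ with $L_1$ of codimension $\dim X - b$ in the first factor and $L_2$ of codimension $b$ in the second.

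\textbf{Step 1: the basic data of $\overline{E}'$.}
\begin{itemize}
\item Irreducibility follows because $E$ is irreducible: it is locally trivial over the irreducible $V$ with irreducible fibers.
\item The dimension is $\dim \overline{E}'=d+r-1$.
\item The image of the projection to $\mathbb{P}^{m-1}$ is the closure of $\bigcup_p \mathbb{P}(L_p)$, which equals $\mathbb{P}(\mathscr{D}(E))$. It has dimension $\dim\mathscr{D}(E)-1$.
\item The projection to the first factor has image $\overline{V}$, of dimension $d$.
\end{itemize}

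\textbf{Step 2: apply the Castillo et al. criterion.} It gives $\deg_{a,b}(\overline{E}')\neq 0$ if and only if
\[
b\le \dim\pi_2 = \dim\mathscr{D}(E)-1 \quad\text{and}\quad a\le \dim\pi_1 = d,
\]
together with the fiber-dimension conditions coming from their theorem. Here $a+b=d+r-1$.

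Now set $b=d+r-1-c$ and consider the two cases.
\begin{itemize}
\item If $d+r\le m$, then $c=0$ and $b=d+r-1$. The condition $b\le \dim\mathscr{D}(E)-1$ reads $\dim\mathscr{D}(E)\ge d+r$, i.e. $\Delta(E)\le 0$. Since the defect is always $\ge0$, this means $\Delta(E)=0$.
\item If $d+r>m$, then $c=d+r-m$ and $b=m-1$. The condition becomes $\dim \mathscr{D}(E)\ge m$, i.e. $\mathscr{D}(E)=\mathbb{A}^m$, which is again $\Delta(E)=0$.
\end{itemize}
In both cases the complementary condition on $a=c$ is satisfied, since $c\le d$ because $r\le m$. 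Therefore the bidegree vanishes exactly when $\Delta(E)>0$.

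\textbf{Main obstacle.} The hard step is matching the precise statement of \cite[Theorem A]{CASTILLO2020107382}, whose support condition involves projections to subsets of factors and dimension inequalities of the form $\dim \pi_J(X)\ge \sum_{j\in J} a_j$. For two factors these conditions are $b\le\dim\pi_2(X)$, $a\le \dim\pi_1(X)$ and $a+b=\dim X$. I would verify that only the $\pi_2$ condition is binding in our case.

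Compatibility with the paper's index conventions also has to be checked; if the roles of the indices are swapped, the same argument runs with the factors interchanged. A further point is to confirm that taking the projective closure does not enlarge the second projection. This holds because the points added at infinity lie in the closure, so the image of the closure is contained in the closure of the image.
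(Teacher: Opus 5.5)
Your argument is correct, and it is more direct than the paper's.

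\textbf{The paper's route.} It never applies \cite[Theorem~A]{CASTILLO2020107382} to $\overline E'$ itself. In Lemma~\ref{observación positividad} it first cuts the base by a general linear space, rewriting $\deg_{d-a,a+r-1}(\overline E')=\deg_{0,a+r-1}(\overline E_a')$. It then uses only the second-factor inequality of the positivity criterion, which gives $\deg_{d-a,a+r-1}(\overline E')>0\iff\dim\mathscr D(E_a)=a+r$. For $d+r\le m$ it takes $a=d$. For $d+r>m$ it takes $a=m-r$, and to get back from $\mathscr D(E_{m-r})$ to $\mathscr D(E)$ it needs Proposition~\ref{prop:recovery-successive-sections}. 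That is an incidence and fiber-dimension argument showing that general sections of the base do not shrink the direction variety. The cases $r=m$ and $d=0$ are treated separately.

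\textbf{Your route.} You apply the two-factor criterion to $\overline E'$ with type $(c,d+r-1-c)$. The first-factor inequality $c\le d=\dim\pi_1(\overline E')$ is automatic. The second-factor inequality $d+r-1-c\le\dim\mathscr D(E)-1$ reads $\dim\mathscr D(E)\ge\min\{d+r,m\}$, which is exactly $\Delta(E)=0$. So you need no slicing, no Proposition~\ref{prop:recovery-successive-sections}, and no separate degenerate cases.

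\textbf{Two inputs to state explicitly.} First, $\overline E'$ must be irreducible, since the criterion is for irreducible varieties and the paper only records equidimensionality. This holds because $\overline E'$ is the image of $Z(I(\overline E))\cap\bigl((\mathbb A^{n+1}\setminus 0)\times(\mathbb A^m\setminus 0)\bigr)$. That set is a nonempty open subset of an irreducible cone, since $I(\overline E)$ is prime and $r>0$. Saying only that $E$ is irreducible does not quite cover this step. Second, $\pi_2(\overline E')=\mathbb P(\mathscr D(E))$; your density argument gives this, and it is the case $L=\mathbb A^n$ of \eqref{ecuacion cono}.

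\textbf{What each approach buys.} Your computation gives more for free: $\deg_{d-a,a+r-1}(\overline E')>0$ if and only if $a\le\dim\mathscr D(E)-r=\min\{d,m-r\}-\Delta(E)$. This yields Corollary~\ref{cor:defect-bidegrees} and the no-internal-zeros part of Proposition~\ref{proposition huh} without appealing to Huh. The paper's longer route produces the interpretation of each bidegree through $\mathscr D(E_a)$ and the equality $\mathscr D(E_a)=\mathscr D(E)$ for $a\ge s$. It reuses both in Proposition~\ref{producto} and Theorem~\ref{teorema formula}.
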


Hence, this theorem reduces the problem of defectivity to the vanishing of a single bidegree. Moreover, it follows immediately that the defect itself can be recovered from the number of consecutive vanishing bidegrees (see Corollary~\ref{cor:defect-bidegrees}). In particular, when $E$ is the conormal bundle (see Example \ref{ejemplo fibrados}), this recovers a classical Theorem from Holme (see \cite[Theorem~1.1]{HOLME2001363}), which characterizes the duality defect in terms of the associated polar degrees.

\medskip

Our second main contribution is a degree formula for arbitrary embedded algebraic vector bundles, extending \cite[Theorem 21]{jeronimo2025geometric} from tangent bundles of smooth curves to embedded algebraic vector bundles of arbitrary rank over varieties of arbitrary dimension. Van der Waerden's theorem on bidegrees (see \cite{VANDERWAERDEN1978303}) expresses the geometric degree of a projective variety defined by bihomogeneous equations as the sum of the bidegrees of its biprojective interpretation. We apply this theorem to the projective closure of an embedded algebraic vector bundle and give a geometric meaning to each summand in terms of direction varieties of restricted bundles $E_a$ and associated multiplicities $\xi_{d-a+1}(E)$ by generalizing \cite[Lemma 4.3]{CaminataCidRuizConca2023} (see Proposition \ref{producto} and Remark \ref{rmk: w(E)}). This leads to the following Theorem:

\vspace{2cm}

\begin{theoremB}
Let $V\subseteq\mathbb A^n$ be an irreducible algebraic variety of dimension $d$, and let $E\subseteq V\times\mathbb A^m$ be an embedded algebraic vector bundle of rank $r>0$. For $1\leq a\leq d$, let $E_a$ be the restriction of $E$ to a general $a$-dimensional affine linear section of $V$. Set $s:= \min{\{d,m-r \}}-\Delta(E)$, then:
\[
\deg(E)
=
\deg(V)
+
\xi_{d-s+1}(E)\deg\bigl(\mathscr D(E)\bigr)
+
\sum_{a=1}^{s-1}
\xi_{d-a+1}(E)\deg\bigl(\mathscr D(E_a)\bigr),
\]
where $\xi_{d+1}(E)=0$ and for $1 \leq a \leq s,$ $\xi_{d-a+1}(E)$ is the degree of the projection $E_a\to\mathscr{D}(E_a)$ when this map is generically finite, and is zero otherwise.  
\end{theoremB}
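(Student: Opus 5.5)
Our plan is to combine van der Waerden's theorem on bidegrees with a geometric interpretation of each bidegree of $\overline{E}'$. Write $\overline{E}'\subseteq\mathbb P^n\times\mathbb P^m$ for the biprojective interpretation of the projective closure of $E$. Since $E$ is the affine cone over $\overline{E}'$ in the second factor, each point of $\overline{E}'$ lies in a fiber $\{p\}\times\mathbb P(L_p)$, and $E$ has pure dimension $d+r$. So $\overline{E}'$ is irreducible of dimension $d+r-1$. Van der Waerden's theorem gives
\[
\deg(E)=\sum_{i+j=d+r-1}\deg_{i,j}(\overline{E}'),
\]
where $\deg_{i,j}$ is the number of points of $\overline{E}'$ cut out by $i$ general hyperplanes pulled back from $\mathbb P^n$ and $j$ pulled back from $\mathbb P^m$. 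The index $i$ ranges over $0\le i\le d$, because the first projection has image of dimension $d$. We then organize the sum by the number $i$ of first-factor hyperplanes. Setting $a:=d-i$, cutting $V$ by $i$ general hyperplanes gives a general $a$-dimensional affine linear section $V_a$, and over it we obtain the restricted bundle $E_a$, whose biprojective interpretation has dimension $a+r-1$.

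\textbf{Identifying the summands.} For $i=d$, i.e.\ $a=0$, we get finitely many points of $V$, and over each a fiber $\mathbb P^{r-1}$. Cutting by $r-1$ general second-factor hyperplanes leaves one point per fiber, so $\deg_{d,r-1}=\deg(V)$. For $1\le a\le d$ we have $\deg_{d-a,a+r-1}(\overline{E}')=\deg_{0,a+r-1}(\overline{E_a}')$. This counts the points of $\overline{E_a}'$ lying over a general linear section of $\mathbb P^m$ of codimension $a+r-1$. By the projection formula, in the spirit of Proposition~\ref{producto} and \cite[Lemma 4.3]{CaminataCidRuizConca2023}, this count equals (degree of $\overline{E_a}'\to\mathbb P(\mathscr D(E_a))$) times $\deg\mathscr D(E_a)$ when the map is generically finite, and it is $0$ otherwise. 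Passing from the projectivization to the affine cone preserves both the degree of $\mathscr D$ and the degree of the map. This yields the terms $\xi_{d-a+1}(E)\deg\mathscr D(E_a)$, with $\xi_{d-a+1}(E)$ defined as in the statement.

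\textbf{Range of nonvanishing terms.} It remains to show three things: the terms with $a>s$ vanish, $\mathscr D(E_s)=\mathscr D(E)$ whenever $\xi_{d-s+1}\neq0$, and $\xi_{d+1}(E)=0$ (the last is the convention covering the case $s=d$ or $m-r$ small). We will show that $\dim\mathscr D(E_a)=\min\{a+r,\dim\mathscr D(E)\}$ for general sections. This is the main obstacle. The plan is a Bertini/incidence dimension count: $\mathscr D(E_a)$ is swept by the fibers $L_p$ with $p$ in a general $a$-dimensional section. The fibers of $E\to\mathscr D(E)$ have dimension $d+r-\dim\mathscr D(E)$, and a general codimension-$(d-a)$ section meets each such fiber in the expected dimension. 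Hence $\dim\mathscr D(E_a)=a+r$ exactly when $a+r\le\dim\mathscr D(E)=\min\{d+r,m\}-\Delta(E)$, i.e.\ when $a\le s$. In that range, for $a=s$ the closure $\mathscr D(E_s)$ is an irreducible subvariety of $\mathscr D(E)$ of the same dimension, so the two coincide. For $a>s$ the map $E_a\to\mathscr D(E_a)$ has positive-dimensional fibers, so that term is $0$. Theorem~A, which characterizes when the first-factor bidegree vanishes, serves as a consistency check on this threshold. Summing the surviving terms gives the stated formula.
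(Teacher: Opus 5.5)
Your proposal is correct and follows essentially the paper's route: van der Waerden's decomposition, the extremal bidegrees of Lemma~\ref{lemacancelacion}, the product interpretation of Proposition~\ref{producto}, and an incidence/fiber-dimension count that is exactly Proposition~\ref{prop:recovery-successive-sections}, giving $\mathscr D(E_a)=\mathscr D(E)$ for $a\geq s$. Only the range $a\geq s$ is needed for the formula. Your stronger claim $\dim\mathscr D(E_a)=a+r$ for $a<s$ is true, but the ``general fiber'' argument does not prove it as phrased: points of $\mathscr D(E_a)$ are not general in $\mathscr D(E)$, so you would need a local dimension count at a general point of the incidence. Also, the convention $\xi_{d+1}(E)=0$ is what covers the case $s=0$, not $s=d$.
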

When nonzero, the coefficient $\xi_{d-a+1}(E)$ counts the points of the linear section whose fibers contain a general direction in $\mathscr{D}(E_a)$. When considering $E$ to be the appropriate bundle, these quantities recover familiar enumerative invariants placing them within the same framework. Examples of this are, the tangent degree $\tau(X)$ studied in \cite{HernandezGomezRusso2026} (see Remark \ref{remark seto}), the secant degree $\mu(X)$ (see Definition \ref{def:secantdegree} and Proposition \ref{prop:secant-multiplicities}) and the invariant $\omega(V)$ introduced in \cite[Definition 17]{jeronimo2025geometric} (see Subsection \ref{multiplicities}) which counts the number of points from a smooth curve which have a sufficiently general tangent line. 

Finally we show that embedded triviality is a property characterized by the minimality of the degree of $E$ (see Corollary \ref{coro: triviales}). This is also an extension of \cite[Theorem 35]{jeronimo2025geometric} into the setting of embedded algebraic vector bundles.

\subsubsection{Sharp bounds for the geometric degree of the tangent bundle} \ \medskip

Let $V\subseteq\mathbb A^n$ be a smooth irreducible affine
variety, and let $X\subseteq\mathbb P^n$ be its projective
closure. When $X$ is smooth, the bidegree formula and its
interpretation in terms of Severi's ceti
(see Proposition~\ref{proposetomultig}), together with
Severi's double point formula, yield precise degree estimates.
A deformation argument (see Lemma~\ref{Lema defomracion})
establishes the quadratic bound without assuming that $X$
is smooth. Together, these results give the following theorem.

\begin{theoremC}
Let $V\subseteq\mathbb A^n$ be a smooth irreducible affine
variety, with its tangent bundle naturally embedded as
$TV\subseteq\mathbb A^{2n}$ and let $X \subseteq \mathbb{P}^n$ be its projective closure. Then
$$\deg(TV)\leq \deg(V)^2.$$
Moreover: \begin{enumerate}
    \item For every $d,D \in \mathbb{N}$ such that $d+1 \leq n$ there exists an irreducible smooth affine variety $V_{D,d}$ with $\deg(V_{D,d})=D$ and $\dim(V_{D,d}) = d$ such that the previous inequality is an equality.
    \item If $X$ is also smooth and $2\dim(V) \leq n$, then the stronger bound holds:
    $$\deg(TV) \leq \deg(V)^2-2\mu(X)\deg(\operatorname{Sec}(X)),$$
where $\mu(X)$ is the secant degree and $\operatorname{Sec}(X)$ is the secant variety. (See Definitions \ref{def tangt} and \ref{def:secantdegree}) \end{enumerate}
\end{theoremC}
This extends the estimate for varieties defined by generic polynomials
in \cite[Corollary~32]{jeronimo2025geometric} and settles
\cite[Question 33]{jeronimo2025geometric} in full generality. Finally, general effective degree bounds for the tangential and secant varieties can be deduced from Theorem C (see Corollary \ref{cor:tangential-degree-bounds}).

\subsubsection{Applications to prolongation varieties}

\ \medskip

One of the natural motivations for studying the geometric degree of the tangent bundle comes from differential algebra (see \cite[Section~1]{jeronimo2025geometric}). Standard effective methods for deciding whether a system of differential algebraic equations \(\Sigma\) admits a solution, rely on successive differentiations, usually referred to as \emph{prolongations}, with the objective of reducing this differential problem to an algebraic one after a finite amount of steps. Motivated by the fact that the complexity of solving a polynomial system is closely related to the geometric degree of the underlying algebraic variety (see \cite{GIUSTI19971223,Castro2003TheHO}), it is therefore natural to study the degrees of prolongation varieties. This problem has been explicitly raised by Pogudin as a general open problem in differential algebra, even for polynomial dynamical systems (see \cite[\S2, Question~5]{PogudinOpenProblems2025}). The next theorem gives a partial answer to this question under reasonable hypotheses.
\begin{theoremD}
Let $\Sigma$ be a system of differential equations over an algebraically closed field of constants of characteristic $0$. Assume that its equations generate a prime ideal and that its algebraic zero set $Z(\Sigma)$ is smooth. Then
$$ \deg\bigl(Z(\operatorname{Prol}(\Sigma))\bigr)
 \leq\deg\bigl(Z(\Sigma)\bigr)^2.$$
\end{theoremD}
The estimate can be iterated whenever the same hypotheses hold at the successive stages.

\subsection*{Outline}

The paper is organized as follows. Section~2 collects the preliminary material. Section~3 introduces embedded algebraic vector bundles and their varieties of directions. Section~4 develops the general defect theory and its numerical criteria. Section~5 establishes the degree formula and studies bundles of minimal degree. Section~6 applies the theory to tangent and secant constructions and proves the quadratic bound for tangent bundle degrees. Finally, Section~7 applies these results to prolongation varieties.

\section{Preliminaries}

Throughout the paper, $K$ denotes an algebraically closed field of
characteristic zero, and all varieties and morphisms are defined over $K$. We will use $\mathbb{A}^n$ to refer to the Zariski $n$-dimensional affine space over $K$ and $\mathbb{P}^n$ for the $n$-dimensional projective space over $K$.
The term \emph{generic} means outside a proper Zariski-closed subset of the
relevant parameter space.

We write
\[
    \mathbf x=(x_1,\ldots,x_n),\qquad
    \overline{\mathbf x}=(x_0,\mathbf x)=(x_0,x_1,\ldots,x_n),
    \qquad
    \mathbf y=(y_1,\ldots,y_m).
\]

For a family of polynomials $\mathcal F\subseteq K[\mathbf x]$,
we denote its common zero locus in $\mathbb A^n$ by $Z(\mathcal F)$.
For an affine variety $V\subseteq\mathbb A^n$, we write $I(V)$
for the ideal of polynomials vanishing on $V$.
We use the same notation in the projective setting, where zero
loci are defined by homogeneous polynomials in
$K[\overline{\mathbf x}]$ and $I(X)$ denotes the homogeneous
vanishing ideal of $X\subseteq\mathbb P^n$.

\medskip

All closures are taken in the Zariski topology. The
\emph{projective closure} of an affine variety
$V\subseteq\mathbb A^n$, denoted by $\overline V$, is its closure
in $\mathbb P^n$ under the standard embedding
\[
    \mathbb A^n\hookrightarrow\mathbb P^n,
    \qquad
    (p_1,\ldots,p_n)\longmapsto[1:p_1:\cdots:p_n].
\]

We write $\mathbb G(n,k)$ for the Grassmannian of $k$-dimensional linear subspaces of $K^n$. For a nonzero subspace $W \subseteq K^n$, we denote by
$\mathbb P(W)$ its projectivization. We use the same convention for conical subsets of $\mathbb{A}^n$.

For a nonempty subset $S\subseteq\mathbb P^n$, we denote by
$\langle S\rangle$ its \emph{projective linear span}, namely
the smallest projective linear subspace containing $S$.
In particular, for distinct points $p,q\in\mathbb P^n$,
$\langle p,q\rangle$ denotes the projective line through them.
A projective variety $X\subseteq\mathbb P^n$ is called
\emph{non degenerate} if it is not contained in a hyperplane $H \subseteq \mathbb{P}^n$.

The hyperplanes of $\mathbb P^n$ themselves form a projective
space of dimension $n$, called the \emph{dual projective space}
and denoted by $(\mathbb P^n)^\vee$. Its structure
is obtained by reversing that of $\mathbb P^n$, so that points
and hyperplanes exchange roles. We denote by $[\ell]$ the point
of $(\mathbb P^n)^\vee$ corresponding to the hyperplane
$Z(\ell)\subseteq\mathbb P^n$, where $\ell$ is a nonzero linear
form, determined up to multiplication by a nonzero scalar.

\subsection{Fiber dimension and generic fibers} 
\ \medskip

Recall that for a projective variety $X$ a function $f:X \rightarrow \mathbb{A}^1$ is regular if it can be written locally in affine charts as a quotient of polynomials with a nonvanishing denominator. A \emph{morphism of varieties} $\varphi:X\to Y$ is a continuous map in the Zariski topology such that on every open set $U \subseteq Y$ the composition of every regular function on $U$ with $\varphi$ is regular in $\varphi^{-1}(U)$. We now state some useful results about fibers of morphisms that we will need later.

\begin{theorem}
\label{teorema de la dimensión de la fibra}
Let $V\subseteq\mathbb A^n$ and $W\subseteq\mathbb A^m$ be irreducible
varieties, let $G\subseteq V$ be a dense open subset, and let
$\varphi\colon G\to W$ be a dominant morphism. Then
\[
    \dim \varphi^{-1}(p)\geq \dim V-\dim W
\]
for every $p\in\varphi(G)$. Moreover, there exists a nonempty open subset
$U\subseteq W$ such that equality holds for every $p\in U\cap\varphi(G)$.
\end{theorem}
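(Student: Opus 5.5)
This is the classical fiber dimension theorem, and I would derive it from the standard statement for dominant morphisms of irreducible varieties, reducing to that case by replacing $V$ with the open set $G$.

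\textbf{Step 1: reduce to irreducible quasi-affine varieties.} Since $G$ is a dense open subset of the irreducible variety $V$, it is itself irreducible with $\dim G=\dim V$. Also $\varphi$ is dominant onto the irreducible $W$, so the pullback on coordinate rings induces an injection of function fields $K(W)\hookrightarrow K(G)=K(V)$. This gives
\[
\operatorname{trdeg}_{K(W)}K(V)=\dim V-\dim W=:e\geq 0 .
\]

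\textbf{Step 2: the lower bound.} Fix $p\in\varphi(G)$ and let $Z$ be an irreducible component of $\varphi^{-1}(p)$. The point $p$ is cut out locally in $W$ by $\dim W$ functions: by Noether normalization (or Krull's principal ideal theorem applied inductively), choose an affine neighborhood $W_0$ of $p$ and functions $g_1,\dots,g_{\dim W}$ on $W_0$ whose common zero locus has $p$ as an isolated point. Then, on an affine open subset of $G$ meeting $Z$, the component $Z$ is a component of $Z(g_1\circ\varphi,\dots,g_{\dim W}\circ\varphi)$. Krull's Hauptidealsatz then gives $\dim Z\geq\dim V-\dim W$.

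\textbf{Step 3: generic equality.} I would shrink $W$ to an affine open set and $G$ to an affine open set $G_0$ mapping into it; this is harmless after removing a proper closed set. Writing $A=K[W_0]\subseteq B=K[G_0]$, I would apply generic freeness/Noether normalization over $A$. After inverting some nonzero $f\in A$, $B_f$ becomes finite over a polynomial ring $A_f[t_1,\dots,t_e]$. Then for every $p$ in the open set $D(f)$, the fiber of $G_0$ over $p$ is finite over $\mathbb A^e$, so it has dimension $\leq e$.

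The main obstacle is the points of $G\setminus G_0$, whose fibers are not controlled by this construction. To handle them, I would apply the same argument by Noetherian induction to each irreducible component $Y$ of $G\setminus G_0$, which satisfies $\dim Y<\dim V$, using the restricted map $\varphi|_Y$. If $\varphi|_Y$ is not dominant, its image closure is a proper closed subset, and I remove it. If it is dominant, then for generic $p$ the fibers of $\varphi|_Y$ have dimension $\dim Y-\dim W<e$. Intersecting the finitely many resulting nonempty opens yields $U$ on which $\dim\varphi^{-1}(p)\leq e$. Combined with Step 2, this gives equality for every $p\in U\cap\varphi(G)$.
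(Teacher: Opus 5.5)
Your proof is correct and is essentially the standard textbook argument that the paper delegates to by citing Shafarevich (Ch.~I, \S6.3, Thm.~1.25): local parameters at $p$ together with Krull's theorem give the lower bound, and generic Noether normalization of $K[G_0]$ over $K[W]$ gives the upper bound on a nonempty open set. As a minor simplification, you can avoid the induction over the components of $G\setminus G_0$ by covering $G$ with finitely many affine opens $G_i$, each dense in $G$ and hence dominant onto $W$, running your Step~3 on each, and taking $U=\bigcap_i D(f_i)$.
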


\begin{proof}
See \cite[Chapter~I, Section~6.3, Theorem~1.25]{shafarevich2013basic}.
\end{proof}

\begin{corollary}\label{coro:fibra}
Let $f\colon V\to W$ be a dominant morphism between irreducible affine
varieties of the same dimension. There exists a nonempty open subset
$U\subseteq W$ such that, for every $y\in U$, the fiber $f^{-1}(y)$ is
finite and
\[
    \#f^{-1}(y)=[K(V):K(W)]_f.
\]
\end{corollary}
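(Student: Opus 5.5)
Since $\dim V=\dim W$, Theorem~\ref{teorema de la dimensión de la fibra} already gives a nonempty open set on which every fiber has dimension $0$, so those fibers are finite. The remaining task is to show the cardinality of a general fiber equals the separable degree $[K(V):K(W)]_f$. As $K$ has characteristic zero, this is the full degree $[K(V):K(W)]$ of the finite extension $f^*\colon K(W)\hookrightarrow K(V)$. That extension is finite because $f$ is dominant and the transcendence degrees agree.

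The plan is the standard primitive-element argument.

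\textbf{Step 1: reduce to a hypersurface.} Write $K(V)=K(W)(\theta)$ with $\theta$ a regular function on $V$. This is possible because $K[V]$ generates $K(V)$ over $K(W)$, and a suitable $K$-linear combination of the coordinate functions gives a primitive element. Let $P(T)\in K(W)[T]$ be the minimal polynomial of $\theta$, of degree $N=[K(V):K(W)]$.

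\textbf{Step 2: localize.} Pick a nonzero $h\in K[W]$ clearing all denominators, so the following hold:
\begin{itemize}
\item the coefficients of $P$ lie in $K[W]_h$;
\item the leading coefficient is invertible;
\item the discriminant of $P$ is invertible, which is possible since $P$ is separable in characteristic zero;
\item each coordinate function $x_i$ of $V$ can be written as $x_i=\sum_j c_{ij}\theta^j$ with $c_{ij}\in K[W]_h$.
\end{itemize}
Then $K[V]_{h}=K[W]_h[T]/(P)$. Concretely, the map $(f,\theta)\colon f^{-1}(D(h))\to D(h)\times\mathbb A^1$ is an isomorphism onto the hypersurface $\{(y,t)\colon P(y,t)=0\}$.

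\textbf{Step 3: count fiber points.} For $y\in U:=D(h)$, the fiber $f^{-1}(y)$ is in bijection with the roots of $P(y,T)$. This polynomial has degree exactly $N$ (leading coefficient nonzero) and nonzero discriminant, so it has exactly $N$ distinct roots. Hence $\#f^{-1}(y)=N$.

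\textbf{Main obstacle.} The delicate point is Step 2: we must check that the localization really identifies $K[V]_h$ with $K[W]_h[T]/(P)$, i.e.\ that the fiber points correspond bijectively to roots.
\begin{itemize}
\item Injectivity: points of $V$ over $y$ are determined by their $\theta$-value, because the coordinates are polynomials in $\theta$ over $K[W]_h$.
\item Surjectivity: every root $t$ of $P(y,T)$ comes from a point of $V$. For this, use that $K[W]_h[T]/(P)$ is a domain (since $P$ is irreducible over $K(W)$) whose fraction field is $K(V)$. By the choice of $h$ it coincides with $K[V]_h$. A root then gives a maximal ideal of this ring lying over $y$, i.e.\ a point of $V$.

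Alternatively, one may simply cite \cite[Chapter~II, Section~6.3, Theorem~2.28]{shafarevich2013basic} for the equality of the general fiber cardinality with the degree of the map.
\end{itemize}
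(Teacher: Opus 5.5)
Your proof is correct. It differs from the paper only in that the paper gives no argument of its own: it cites \cite[Corollary~6]{jeronimo2025geometric} and treats the statement as the known fact about dominant, generically finite maps (the one you mention as an alternative citation). You instead reprove that fact directly.

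You choose a primitive element $\theta=\sum\lambda_i x_i$ with $\lambda_i\in K$, which is possible because $K$ is infinite and the extension is separable. You then localize at an $h$ that makes $P$ monic over $K[W]_h$ with unit discriminant and puts each $x_i$ in $K[W]_h[\theta]$. The identification $K[V]_{f^*h}\cong K[W]_h[T]/(P)$ holds for the reasons you indicate, but it is cleaner to state them:
\begin{itemize}
\item Surjectivity holds because the $x_i$ and $1/f^*h$ lie in the image.
\item Injectivity holds because, $P$ being monic over $K[W]_h$, division by $P$ in $K[W]_h[T]$ together with minimality of $P$ shows the kernel is exactly $(P)$.
\end{itemize}
In particular, the domain property of $K[W]_h[T]/(P)$ comes from monicity, which your localization ensures: $K[W]_h[T]/(P)$ is then free over $K[W]_h$ and embeds in $K(W)[T]/(P)=K(V)$. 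Irreducibility over $K(W)$ alone would not suffice; for example, $yT$ is irreducible over $K(y)$, yet $K[y][T]/(yT)$ is not a domain.

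Your route gives an explicit open set $U=D(h)$ over which $f$ is surjective and in fact finite \'etale of degree $N$. It also isolates exactly where characteristic zero enters, namely the nonvanishing of the discriminant. The citation buys only brevity.

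Two small remarks:
\begin{itemize}
\item Your opening appeal to the fiber dimension theorem is redundant, since Step~3 already shows that every fiber over $D(h)$ has exactly $N$ points.
\item The subscript in $[K(V):K(W)]_f$ records the morphism inducing the extension $f^*\colon K(W)\hookrightarrow K(V)$ (compare the subscript $F$ in the paper's definition of geometric degree), not a separable degree. This is harmless here, since in characteristic zero the two agree.
\end{itemize}
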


\begin{proof}
See \cite[Corollary~6]{jeronimo2025geometric}.
\end{proof}

\subsection{Tangent spaces and associated varieties} \ \medskip

In this subsection, we introduce the different notions of tangent
space and fix the notation used throughout the paper. We then define the tangential, secant, and dual varieties.
For background on smoothness, we refer the reader to
\cite{matsumura1980commutative,eisenbud1995commutative}.

\begin{notation}
For an affine variety $V\subseteq\mathbb A^n$ and a projective
variety $X\subseteq\mathbb P^n$, we denote their smooth loci by
$V_{\operatorname{reg}}$ and $X_{\operatorname{reg}}$, respectively.
\end{notation}

The Jacobian criterion (see \cite[\S.29]{matsumura1980commutative}) easily shows that $X_{\operatorname{reg}}$ and $V_{\operatorname{reg}}$ are Zariski open dense subsets of $X$ and $V$ respectively.

\begin{definition}
Let $V\subseteq\mathbb A^n$ be an affine variety, let
$X\subseteq\mathbb P^n$ be its projective closure, and let $p\in V_{\operatorname{reg}}$.

\begin{itemize}
    \item The {Zariski tangent space} to $V$ at $p$ (equivalently, to $X$ at $[1:p]$) is
    \[
        T_pV
        :=
        \left\{
            v\in K^n
            \;\middle|\;
            \sum_{i=1}^n
            \frac{\partial f}{\partial x_i}(p)v_i=0
            \text{ for every }f\in I(V)
        \right\} =: T_{[1:p]}X.
    \]

    \item The {affine tangent space} to $V$ at $p$ is
    \[
        T_pV+p
        :=
        \{v+p\mid v\in T_pV\}
        \subseteq\mathbb A^n.
    \]

    \item The {projective tangent space} to $V$ at $p$ (equivalently, to $X$ at $[1:p]$) is
    \[
        \mathbb T_pV
        :=
        \overline{p+T_pV}
        =
        \mathbb P\left(
            \operatorname{Span}((1,p))
            \oplus
            \bigl(\{0\}\times T_pV\bigr)
        \right):= \mathbb{T}_{[1:p]}X.
    \]
    
\end{itemize}
For any $q\in X_{\operatorname{reg}}$, the spaces $T_qX$ and
$\mathbb T_qX$ are defined analogously using any affine chart
containing $q$.
\end{definition}

Using this notation, we introduce three classical projective
varieties associated with $X$: the tangential, dual, and secant
varieties.

\begin{definition}\label{def tangt}
Let $X\subsetneq\mathbb P^n$ be an irreducible projective variety
of dimension $d \geq 1$.
\begin{itemize}
    \item The {tangential variety} of $X$, denoted by $\Tan(X) \subseteq \mathbb{P}^n$ is: $$\displaystyle \operatorname{Tan}(X)
        :=
        \overline{
            \bigcup_{p\in X_{\operatorname{reg}}}\mathbb T_pX
        }.$$

    \item The {dual variety} of $X$, denoted by $X^\vee \subseteq (\mathbb{P}^n)^\vee$ is
    \[
        X^\vee
        :=
        \overline{
            \left\{
                H\in(\mathbb P^n)^\vee
                \;\middle|\;
                \mathbb T_pX\subseteq H
                \text{ for some }p\in X_{\operatorname{reg}}
            \right\}
        },
    \]
    \item The {secant variety} of $X$, denoted by $\operatorname{Sec}(X) \subseteq \mathbb{P}^n$ is $$\operatorname{Sec}(X)
        := \displaystyle
        \overline{
            \bigcup_{\substack{p,q\in X\\p\neq q}}
            \langle p,q\rangle
        }.$$
      \item The {Gauss map} of $X$ is the morphism
$g:X_{\operatorname{reg}}\longrightarrow\mathbb G(n+1,\dim(X)+1),$
which assigns to each smooth point $p\in X$ the $(d+1)$-dimensional
linear subspace of $K^{n+1}$ whose projectivization is
$\mathbb T_pX$.
\end{itemize}
\end{definition}

We now recall a classical result relating the tangential and
secant varieties of a smooth projective variety.

\begin{theorem}\label{Teorema Zak}
Let $X\subseteq\mathbb P^n$ be a smooth irreducible non degenerate
projective variety of dimension $d$. Then either
\[
    \dim\Tan(X)=2d
    \text{ and }
    \dim\operatorname{Sec}(X)=2d+1,
\text{ or }
    \Tan(X)=\operatorname{Sec}(X).
\]
\end{theorem}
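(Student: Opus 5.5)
This is Zak's classical theorem relating tangential and secant varieties, so I will follow Zak's original argument via Terracini's lemma and the Fulton–Hansen connectedness theorem.

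\textbf{Step 1 (a priori dimension bounds).} Since $\Tan(X)$ is swept out by a $d$-dimensional family of $d$-planes, $\dim\Tan(X)\le 2d$. Since $\operatorname{Sec}(X)$ is the image of the abstract secant variety (a $\mathbb P^1$-bundle over an open subset of $X\times X$), $\dim\operatorname{Sec}(X)\le 2d+1$. Moreover $\Tan(X)\subseteq\operatorname{Sec}(X)$, because tangent lines are limits of secant lines.

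\textbf{Step 2 (the basic dichotomy for the secant variety).} Since $X$ is non degenerate, $\operatorname{Sec}(X)\neq X$ unless $X$ is linear. Non degeneracy would then force $X=\mathbb P^n$, and this case is excluded because $X\subsetneq\mathbb P^n$. By the standard fiber dimension count (Theorem~\ref{teorema de la dimensión de la fibra}) applied to the incidence correspondence, I get $\dim\operatorname{Sec}(X)\ge \dim\Tan(X)+1$ unless $\Tan(X)=\operatorname{Sec}(X)$. To see this, use Terracini's lemma: the tangent space to $\operatorname{Sec}(X)$ at a general point $z\in\langle p,q\rangle$ is $\langle\mathbb T_pX,\mathbb T_qX\rangle$. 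So if $\operatorname{Sec}(X)\ne\Tan(X)$, the variety $\Tan(X)$ is a proper closed subvariety of the irreducible variety $\operatorname{Sec}(X)$, and hence $\dim\Tan(X)\le\dim\operatorname{Sec}(X)-1$.

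\textbf{Step 3 (the key step: $\dim\Tan(X)=2d$ whenever $\Tan(X)\neq\operatorname{Sec}(X)$).} This is where smoothness enters, and I expect it to be the main obstacle. I would use Zak's theorem on tangencies, which follows from Fulton–Hansen connectedness. Let $L\subseteq\mathbb P^n$ be a linear subspace of dimension $m$ containing the tangent spaces $\mathbb T_xX$ for $x$ in a subvariety $Y\subseteq X$. Then $\dim Y\le m-d$.

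Now suppose $\dim\Tan(X)<2d$. Then a general point $z\in\Tan(X)$ lies on $\mathbb T_pX$ for a positive-dimensional family of points $p$. Projecting from a general point of $\operatorname{Sec}(X)$, or comparing with $\dim\operatorname{Sec}(X)\le 2d+1$, I would derive that the general secant span $\langle\mathbb T_pX,\mathbb T_qX\rangle$, a linear space of dimension less than $2d+1$, is tangent along a subvariety whose dimension violates the tangency bound. This forces $\operatorname{Sec}(X)=\Tan(X)$.

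I expect the technical heart to be the invocation of Fulton–Hansen: for $f:X\times X\to\mathbb P^n\times\mathbb P^n$ and the diagonal, the connectedness of the preimage yields the tangency bound. I would cite it rather than reprove it.

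\textbf{Step 4 (conclusion).} If $\Tan(X)\neq\operatorname{Sec}(X)$, Step 3 gives $\dim\Tan(X)=2d$. Steps 1 and 2 then give $2d+1\ge\dim\operatorname{Sec}(X)\ge 2d+1$. This yields the first alternative of the dichotomy.

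Since this is classical, in the paper I would ultimately refer to \cite[Chapter~1]{Zak1993}, with the above as the underlying argument.
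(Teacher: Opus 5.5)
Citing Zak at the end, as you do, is exactly what the paper does (Zak, Chapter I, Theorem 1.4), and Steps 1, 2 and 4 are fine. Step 2 only uses that a proper closed subvariety of the irreducible variety $\operatorname{Sec}(X)$ has smaller dimension, so Terracini is not needed there; also $X=\mathbb P^n$ is not excluded by the hypotheses, but it trivially satisfies the second alternative. However, the argument you offer as the underlying proof has a genuine gap in Step 3, the implication $\dim\Tan(X)<2d\Rightarrow \Tan(X)=\operatorname{Sec}(X)$, which is the entire content of the theorem. The tangency mechanism you sketch cannot produce a contradiction. Write $t=\dim\Tan(X)$ and $s=\dim\operatorname{Sec}(X)$.
\begin{itemize}
\item Your claim that the span $\langle\mathbb T_pX,\mathbb T_qX\rangle$ has dimension $<2d+1$ presupposes $s\le 2d$, which you do not know. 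If $s=2d+1$, Terracini's span has dimension $2d+1$, the entry locus is finite, and the theorem on tangencies says nothing.
\item Even in the secant-defective case, the tangency bound only gives inequalities. Along the entry locus it gives $2d+1-s\le s-d$. Along the contact locus $\{p : z\in\mathbb T_pX\}$ of a general $z\in\Tan(X)$ it gives $2d-t\le t-d$. For $d=2$, $t=3$, $s\in\{4,5\}$ all of these hold, so such dimension counts cannot yield a contradiction.
\item Projecting from a single point of $\operatorname{Sec}(X)$ does not help either: the target $\mathbb P^{n-1}$ is in general too large for any connectedness statement to apply.
\end{itemize}

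The missing ingredient is the Fulton--Hansen theorem in its embedding form: a finite unramified morphism $f\colon X\to\mathbb P^m$ from an irreducible projective variety with $m<2\dim X$ is a closed embedding. This follows from connectedness of $(f\times f)^{-1}(\Delta_{\mathbb P^m})$, in which $\Delta_X$ is open and closed because $f$ is unramified. It must be applied to a linear projection, not routed through the tangency bound. Suppose $t<2d$ and $\Tan(X)\neq\operatorname{Sec}(X)$, so that $t<s\le n$. Take a general linear subspace $\Lambda$ of codimension $t+1$.
\begin{itemize}
\item $\Lambda$ misses $\Tan(X)$, which, since $X$ is smooth, contains $X$ and every $\mathbb T_xX$.
\item $\Lambda$ meets $\operatorname{Sec}(X)$ in some point $z\notin\Tan(X)$. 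Such a $z$ lies on a secant line $\langle p,q\rangle$ with $p\neq q$, because limits of secant lines through a diagonal point lie in the tangent space.
\item The projection $\pi_\Lambda\colon X\to\mathbb P^{t}$ is therefore finite and unramified, yet $\pi_\Lambda(p)=\pi_\Lambda(q)$.
\end{itemize}
Since $t<2d$, this contradicts Fulton--Hansen, which proves your Step 3; your Step 4 then finishes. This projection argument is the standard Fulton--Hansen proof of the result the paper cites.
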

\begin{proof}
See \cite[Chapter~I, Theorem~1.4]{Zak1993}.
\end{proof}

\subsection{Geometric degree}\label{subsec:deg geom}
\ \medskip 

In this subsection, we recall the notion of geometric degree using the approach from \cite{Heintz1983}.

\medskip

Fix $d\leq n$. A pair
$(H,\xi)\in\mathbb A^{d\times n}\times\mathbb A^d$, with
$\operatorname{rank}(H)=d$, determines the affine linear subspace
\[
    H_\xi:=\{p\in\mathbb A^n\mid Hp=\xi\}
\]
of dimension $n-d$. Let $V\subseteq\mathbb A^n$ be irreducible of dimension $d$, and consider
the morphism
\[
F\colon\mathbb A^{d\times n}\times V
\longrightarrow
\mathbb A^{d\times n}\times\mathbb A^d,
\qquad
(G,p)\longmapsto(G,Gp).
\]
For every $(H,\xi)$,
\[
    F^{-1}(H,\xi)=\{H\}\times(V\cap H_\xi).
\]
It can be easily seen that $F$ is
dominant and generically finite (see \cite{Heintz1983}). This gives the following equivalent
descriptions of the geometric degree.

\begin{definition}\label{def: grado geom}
The geometric degree of $V$ is
\[
\deg(V)
:=
\left[
K(\mathbb A^{d\times n}\times V):
K(\mathbb A^{d\times n}\times\mathbb A^d)
\right]_F
=\#(V\cap H_\xi),
\]
where, $H_\xi$ is a \emph{generic} affine linear subspace  of dimension $n-d$.
\end{definition}

Equivalently, $\deg(V)$ is the maximum finite cardinality obtained by
intersecting $V$ with affine linear subspaces of complementary dimension;
see \cite[Lemma~1 and Proposition~1]{Heintz1983}. A linear variety has degree
$1$, the degree of a hypersurface is the degree of its square-free defining
polynomial

For a reducible variety, we define the degree as the sum of the degrees of
its irreducible components. In the nonequidimensional case this convention
does not admit a description by a single general linear section, but it is
the convention used in the affine B\'ezout inequalities below.

\begin{proposition}\label{prop:bezout}
Let $V,V_1,\ldots,V_r\subseteq\mathbb A^n$ be algebraic varieties. Then
\[
\deg(V\cap V_1\cap\cdots\cap V_r)
\leq
\deg(V)\min\left\{
\prod_{i=1}^r\deg(V_i),
\left(\max_{1\leq i\leq r}\deg(V_i)\right)^{\dim V}
\right\}.
\]
In particular, if $f_1,\ldots,f_r\in K[\mathbf x] \setminus \{0\}$, then
\[
\deg Z(f_1,\ldots,f_r)
\leq
\min\left\{
\prod_{i=1}^r\deg(f_i),
\left(\max_{1\leq i\leq r}\deg(f_i)\right)^n
\right\}.
\]
\end{proposition}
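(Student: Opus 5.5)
The plan is to prove the general inequality first and derive the polynomial case from it. This is the affine B\'ezout inequality of Heintz. The main tool is the geometric characterization of degree in Definition~\ref{def: grado geom}, combined with Heintz's reduction of intersections to linear sections.

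\textbf{Step 1: the product bound for two factors.} I will show $\deg(V\cap W)\le \deg(V)\deg(W)$ for varieties $V,W\subseteq\mathbb A^n$. Since degree is additive over irreducible components and every component of $V\cap W$ lies in some $V_i\cap W_j$, we may assume $V$ and $W$ are irreducible. Then realize $V\cap W$ as $(V\times W)\cap\Delta$, where $\Delta\subseteq\mathbb A^{2n}$ is the diagonal. The diagonal is a linear subspace cut out by $n$ linear equations. Two facts are then needed:
\begin{itemize}
\item[(a)] $\deg(V\times W)=\deg(V)\deg(W)$, which follows by taking generic linear sections of product form.
\item[(b)] For any variety $Y$ and any affine linear subspace $L$, $\deg(Y\cap L)\le\deg(Y)$.
\end{itemize}

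\textbf{Proof of (b), the main obstacle.} The difficulty is that $Y\cap L$ may be non-equidimensional, so its degree is a sum over components of different dimensions. Heintz's argument handles this. It suffices to treat a single hyperplane $L$ and induct. For each component $C$ of $Y\cap L$ of dimension $e$, choose a generic affine subspace $M_C$ of codimension $e$ inside $L$. The points of $C\cap M_C$ are isolated points of $Y\cap L\cap M_C$. One then perturbs to a single linear space $M$ of suitable dimension that simultaneously isolates points from all components. This is done by taking a generic family and using Definition~\ref{def: grado geom} together with the maximality characterization, namely that $\deg$ equals the maximum finite cardinality of a complementary linear section (\cite[Lemma 1, Proposition 1]{Heintz1983}). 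The outcome is that the total number of isolated points is at most $\deg(Y)$. I would simply cite Heintz for this step.

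\textbf{Step 2: the first bound.} Iterating Step 1 gives $\deg(V\cap V_1\cap\cdots\cap V_r)\le\deg(V)\prod_i\deg(V_i)$.

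\textbf{Step 3: the second bound.} For the bound $\deg(V)\,(\max_i\deg V_i)^{\dim V}$, I intersect successively and control the dimension drop. Let $V^{(0)}=V$ and $V^{(j)}=V^{(j-1)}\cap V_j$. Split each $V^{(j)}$ into two parts:
\begin{itemize}
\item components contained in $V_{j+1}$, which survive unchanged and keep their degree;
\item components not contained in $V_{j+1}$, whose intersection with $V_{j+1}$ has strictly smaller dimension and degree at most $\deg\cdot\max_i\deg V_i$ by Step 1.
\end{itemize}
A component of the final intersection can be traced back along a chain through the successive intersections. Along this chain, every multiplication by $D=\max_i\deg V_i$ comes with a drop in dimension. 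Since the dimension can drop at most $\dim V$ times, bookkeeping by dimension gives at most $\dim V$ such factors. Summing over components, with an induction on $\dim V$ that keeps track of the degree of each dimensional stratum, yields the bound $\deg(V)D^{\dim V}$.

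\textbf{The polynomial case.} This follows by taking $V=\mathbb A^n$, which has degree $1$ and dimension $n$, and $V_i=Z(f_i)$. A hypersurface has degree $\deg Z(f_i)\le\deg f_i$, since its degree is the degree of the square-free part of $f_i$.
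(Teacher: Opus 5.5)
Your outline is the standard argument behind the references the paper cites; the paper's own proof is only the citations \cite[Theorem~1]{Heintz1983} and \cite[Proposition~2.3]{HS82}. It uses the diagonal trick $V\cap W\simeq(V\times W)\cap\Delta$ for the product bound and a dimension count for the second bound, and the overall structure is sound.

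However, you have put the difficulty in the wrong place, and your justification of (a) does not give the inequality Step~1 needs. Product-form sections $A\times B$ are not generic linear sections of $\mathbb A^{2n}$. Combined with the maximality characterization, they only give $\deg(V\times W)\geq\deg(V)\deg(W)$, whereas Step~1 uses $\deg(V\times W)\leq\deg(V)\deg(W)$. To get this you must show that product-form sections lose no points at infinity. One way: the asymptotic cone of $V\times W$ is the product of those of $V$ and $W$, so the center $\mathbb P(\ker A\times\ker B)$ at infinity misses $\overline{V\times W}$. The projection from that center is then finite of degree $\deg(V\times W)$, while its general fiber is a product-form section with $\deg(V)\deg(W)$ points. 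This is \cite[Proposition~2]{Heintz1983}, which the paper itself cites in the proof of Theorem~\ref{teorema cuadrado}, and you should cite it here.

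By contrast, (b) is elementary and needs no simultaneous perturbation. Reduce to $Y$ irreducible and $L=H$ a hyperplane, then induct componentwise on the number of hyperplanes. Either $Y\subseteq H$, or $Y\cap H$ is equidimensional of dimension $\dim Y-1$. In the latter case, a generic affine subspace $M\subseteq H$ of dimension $n-\dim Y$ gives a finite complementary section $Y\cap M=(Y\cap H)\cap M$ with $\deg(Y\cap H)$ points. Maximality then gives $\deg(Y\cap H)\leq\deg(Y)$.

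Step~3 is correct in substance, and the bookkeeping can be made precise with a potential. Set $\Phi(Y):=\sum_C\deg(C)\,D^{\dim C}$, summing over the irreducible components $C$ of $Y$, where $D:=\max_i\deg(V_i)\geq 1$ (if $D=0$ the intersection is empty). Components of $V^{(j)}$ contained in $V_{j+1}$ contribute the same amount to $\Phi(V^{(j+1)})$. For any other component $C$, every component of $C\cap V_{j+1}$ has dimension at most $\dim C-1$, and by Step~1 their degrees add up to at most $D\deg(C)$. Their total contribution is therefore at most $D^{\dim C}\deg(C)$. Hence $\Phi(V^{(j+1)})\leq\Phi(V^{(j)})$, and $\deg(V^{(r)})\leq\Phi(V^{(r)})\leq\Phi(V)\leq\deg(V)D^{\dim V}$. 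This formulation automatically handles dimension drops larger than one and components reached along several chains, which your chain-tracing description leaves implicit.
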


\begin{proof}
See \cite[Theorem~1]{Heintz1983} and
\cite[Proposition~2.3]{HS82}.
\end{proof}

The notion of degree introduced above for affine varieties extends naturally to the projective and quasi-projective settings. If \(X\subseteq \mathbb{P}^n\) is a projective variety and $V:= \{x_0 \neq 0\} \cap X $ is a dense affine open subset, we define
\[
\deg(X):=\deg(V),
\]
which geometrically equals the number of intersection points of \(X\) with a general collection of projective hyperplanes of complementary codimension. We finish this section with a deformation Lemma that will be a key in the proof of Theorem \ref{teorema cuadrado} (Theorem C of the introduction). For a quasi-projective variety the degree can be defined as the degree of its closure.

The notion of degree extends naturally to modules and projective schemes via the Hilbert polynomial. We refer the reader to \cite[Chapter I, §7 and Chapter III, Exercise 5.2]{hartshorne1977algebraic} for the relevant definitions and basic properties. 

\begin{lemma}\label{Lema defomracion}
Let \(Z\subseteq\mathbb A^n\times\mathbb A^1\) be an irreducible
quasi-affine algebraic variety of dimension \(d+1\), and let
 $\vartheta\colon Z\longrightarrow\mathbb A^1$
be the projection onto the last coordinate. Assume that \(\vartheta\) is
surjective and that every fiber \(\vartheta^{-1}(s)\) is irreducible of
dimension \(d\). Then
\[
    \deg\bigl(\vartheta^{-1}(0)\bigr)
    \leq
    \deg\bigl(\vartheta^{-1}(s)\bigr) \text{ for general }s\in\mathbb A^1.
\]
\end{lemma}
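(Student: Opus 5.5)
Write $Z_s:=\vartheta^{-1}(s)$ and $D:=\deg\bigl(\overline Z\bigr)$, where $\overline Z\subseteq\mathbb A^{n+1}$ is the Zariski closure. The plan is to compare both fiber degrees with linear sections of $\overline Z$ by hyperplanes $\{t=s\}$, where $t$ is the last coordinate. Working in $\mathbb A^{n+1}$, a generic affine linear subspace of codimension $d$ in the hyperplane $\{t=s\}\cong\mathbb A^n$ has the form $L\times\{s\}$, with $L\subseteq\mathbb A^n$ an affine subspace of codimension $d$. Since $Z_s$ is irreducible of dimension $d$, Definition \ref{def: grado geom} gives $\deg(Z_s)=\#(\overline{Z_s}\cap L)$ for generic $L$, where $\overline{Z_s}$ is the closure of $Z_s$ in $\mathbb A^n$.

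\textbf{Step 1 (general fibers).} Consider the projection of $W_L:=\overline Z\cap(L\times\mathbb A^1)$ to $\mathbb A^1$. For generic $L$, $W_L$ has dimension $1$. Use the family version of Heintz's construction: the map
\[
F\colon\mathbb A^{d\times n}\times\overline Z\to\mathbb A^{d\times n}\times\mathbb A^d\times\mathbb A^1,\qquad (G,p,t)\mapsto(G,Gp,t),
\]
is dominant and generically finite. Let $N$ be its degree. By Corollary \ref{coro:fibra} there is a dense open set of triples $(G,\xi,s)$ over which the fiber has exactly $N$ points, namely $\overline Z\cap(H_\xi\times\{s\})$. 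Intersecting this open set with the condition that $s$ lies in the open set of ``good'' $s$ shows $N=\deg(Z_s)$ for general $s$. One detail to check is that $\overline Z\cap\{t=s\}$ and $\overline{Z_s}$ agree generically: extra components of $\overline Z\cap\{t=s\}$ lie in $\overline Z\setminus Z$, which has dimension $\le d$ and meets only finitely many fibers in dimension $d$.

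\textbf{Step 2 (special fiber).} Fix a generic $(H,\xi)$ with $\#(\overline{Z_0}\cap H_\xi)=\deg(Z_0)=:e$, all intersection points isolated. I want to show that each point $q\in\overline{Z_0}\cap H_\xi$ is a limit of points of $\overline Z\cap(H_{\xi'}\times\{s\})$ for $(H',\xi',s)\to(H,\xi,0)$, and that distinct $q$ give distinct nearby points. Then the general fiber of $F$ has at least $e$ points, so $N\ge e$.

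\textbf{Step 2a (existence of nearby points).} Consider the curve $C:=\overline Z\cap(H_\xi\times\mathbb A^1)$. Since $\dim\overline Z=d+1$ and we cut by $d$ equations, every component of $C$ through a point $(q,0)$ has dimension at least $1$ by Krull's principal ideal theorem. Moreover $(q,0)$ is isolated in $C\cap\{t=0\}$, because it is isolated in $\overline{Z_0}\cap H_\xi$ and, by Step 1's argument, $\overline Z\cap\{t=0\}$ coincides with $\overline{Z_0}$ up to lower-dimensional pieces avoided by a generic $H_\xi$. Hence some component of $C$ through $(q,0)$ is a curve not contained in $\{t=0\}$, so it dominates $\mathbb A^1$. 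This produces points of $\overline Z\cap(H_\xi\times\{s\})$ near $(q,0)$ for general $s$.

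\textbf{Step 2b (distinctness).} Distinct $q$ lie on distinct branches, so by upper semicontinuity of fiber cardinality in the finite part of the restriction of $\vartheta$ to $C$ we get $\#\bigl(C\cap\{t=s\}\bigr)\ge e$ for general $s$. Counting points over an open set requires care. Rather than a topological limit argument, I would state this step as: the restriction $C\to\mathbb A^1$, taken on the union of components through the points $(q,0)$, is a quasi-finite dominant map near $t=0$. Its generic fiber cardinality equals its degree, and that degree is at least the number of points over $0$ counted without multiplicity. This holds because the degree of a finite map onto a normal curve dominates the number of points in any fiber; to apply it I would pass to a finite model.

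\textbf{Step 3 (conclusion).} Combining, $\deg(Z_0)=e\le\#\bigl(\overline Z\cap(H_\xi\times\{s\})\bigr)$ for general $s$. By Step 1, with $(H,\xi)$ generic, this equals $\deg(Z_s)$, since a generic choice of $(H,\xi)$ works simultaneously for general $s$.

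\textbf{Main obstacle.} The hardest part is Step 2b, making the semicontinuity rigorous given that $Z$ is only quasi-affine and points can escape to infinity. I would handle it by replacing the map $C\to\mathbb A^1$ near $0$ with its normalization/finite part via Zariski's Main Theorem. The surjectivity of $\vartheta$ and the irreducibility and equidimensionality of the fibers are exactly what ensure no component of $C$ is contained in $\{t=0\}$.
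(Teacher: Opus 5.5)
Your argument is correct, but it takes a genuinely different route from the paper.

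\textbf{The paper's route.} The paper compactifies $Z$ inside $\mathbb P^n\times\mathbb A^1$ and uses that the closure is flat over the smooth curve $\mathbb A^1$. All scheme-theoretic fibers therefore share a Hilbert polynomial, hence a common degree $D$. Since $\overline{\vartheta^{-1}(0)}$ is one component of the reduced special fiber, $\deg(\vartheta^{-1}(0))\le D$. Equality $D=\deg(\vartheta^{-1}(s))$ for general $s$ then comes from a dimension count on the boundary together with openness of geometric reducedness from EGA IV.

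\textbf{Your route.} You stay affine and work directly with Heintz's point-counting definition of degree. You fix one section $H_\xi$ that is generic both for $\overline{Z_0}$ and for the family map $F$. This reduces everything to a one-dimensional fact: a dominant map from an affine curve to $\mathbb A^1$ has at most its degree many points in any fiber, via normalization or a finite model.

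\textbf{Comparison.} Your approach avoids flatness, Hilbert polynomials and the EGA reducedness result, and it matches the paper's own definition of degree. Points escaping to infinity are harmless for you, since they only lower the special count. The price is the genericity bookkeeping in Steps 1 and 3, which does go through: the set of $(H,\xi)$ whose $s$-slice of the open set of triples from Step 1 is nonempty is the projection of a dense open set, hence dense open, and each such slice is cofinite in $\mathbb A^1$. You also pay with the curve-fiber bound in Step 2b. The paper's route is shorter given the machinery, and it exhibits $\overline{\vartheta^{-1}(0)}$ structurally as a component of the flat limit.

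\textbf{A justification to correct in Step 2a.} Step 1 only shows $\overline Z\cap\{t=s\}=\overline{Z_s}$ for general $s$. The value $0$ may be exceptional, with extra components of full dimension $d$ rather than lower-dimensional pieces. For example, take $\overline Z=\{y(y-1)=tx\}\subseteq\mathbb A^3$ and remove the line $\{y=1,\ t=0\}$. This satisfies all hypotheses, yet $\overline Z\cap\{t=0\}$ still contains the line $\{y=1\}$.

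What you actually need is weaker and true. $\overline Z\cap\{t=0\}$ is a proper hypersurface section of the irreducible $(d+1)$-dimensional variety $\overline Z$, so it has pure dimension $d$. A generic $H_\xi$ therefore meets it in finitely many points, so each $(q,0)$ is isolated in $C\cap\{t=0\}$. Any extra points over $0$ only enlarge that fiber, which does not affect the inequality.
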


\begin{proof}
Embed \(\mathbb A^n\) into \(\mathbb P^n\), and let
\[
    \mathcal Z:=\overline Z
    \subseteq\mathbb P^n\times\mathbb A^1.
\]
The projection \(\vartheta\) extends to a projective morphism
\[
    \overline\vartheta\colon
    \mathcal Z\longrightarrow\mathbb A^1.
\]
Since \(\mathcal Z\) is irreducible and
\(\overline\vartheta\) is dominant, the morphism
\(\overline\vartheta\) is flat by
\cite[III, Proposition~9.7]{hartshorne1977algebraic}. Consequently, its
scheme-theoretic fibers have a common Hilbert polynomial and hence a common
degree, say \(D\).

For \(s\in\mathbb A^1\), denote by \(\mathcal Z_s\) the scheme-theoretic
fiber of \(\overline\vartheta\), by \((\mathcal Z_s)_{\mathrm{red}}\) its
underlying reduced variety, and by
\(\overline{\vartheta^{-1}(s)}\subseteq\mathbb P^n\) the projective closure
of the corresponding fiber of \(Z\). The fibers of
\(\overline\vartheta\) are equidimensional of dimension \(d\) (see
\cite[III, Corollary~9.6]{hartshorne1977algebraic}). Since
\(\overline{\vartheta^{-1}(s)}\) is irreducible of dimension \(d\), it is
an irreducible component of \((\mathcal Z_s)_{\mathrm{red}}\). Therefore
\begin{equation}\label{eq:deformation-upper-bound}
    \deg\bigl(\vartheta^{-1}(s)\bigr)
    =
    \deg\bigl(\overline{\vartheta^{-1}(s)}\bigr)
    \leq
    \deg(\mathcal Z_s)
    =
    D
\end{equation}
for every \(s\in\mathbb A^1\).

It remains to prove that equality holds in
\eqref{eq:deformation-upper-bound} for general \(s\). Since \(Z\) is a
dense open subset of \(\mathcal Z\), its boundary
\[
    B:=\mathcal Z\setminus Z
\]
has dimension at most \(d\). The fiber dimension theorem therefore gives
\[
    \dim\bigl(B\cap(\mathbb P^n\times\{s\})\bigr)\leq d-1
\]
for general \(s\). Thus no irreducible component of
\((\mathcal Z_s)_{\mathrm{red}}\) is contained in \(B\). Since
\(\vartheta^{-1}(s)\) is irreducible, it follows that, for general \(s\) we have the equality:
\[
    (\mathcal Z_s)_{\mathrm{red}}
    =
    \overline{\vartheta^{-1}(s)}
\]

Finally, the generic fiber of \(\overline\vartheta\) is integral and hence,
in characteristic zero, geometrically reduced. Openness of geometric
reducedness in a flat morphism of finite presentation
\cite[Théorème~12.2.4(v)]{EGAIV3} implies that \(\mathcal Z_s\) is reduced
for general \(s\). Hence
\[
    D
    =
    \deg(\mathcal Z_s)
    =
    \deg\bigl(\overline{\vartheta^{-1}(s)}\bigr)
    =
    \deg\bigl(\vartheta^{-1}(s)\bigr)
\]
for general \(s\). Applying
\eqref{eq:deformation-upper-bound} to \(s=0\) proves the result.
\end{proof}

\subsection{The biprojective interpretation and its bidegrees}
\ \medskip

In this section we introduce the bidegrees associated to a biprojective variety, following the strategy from \cite{VANDERWAERDEN1978303}.

\begin{definition}\label{def X'}
Let $X\subseteq\mathbb P^{n+m}$ be an irreducible projective variety whose
ideal $I(X)\subseteq K[\overline{\mathbf x},\mathbf y]$ is bihomogeneous.
The \emph{biprojective interpretation} of $X$ is the variety
\[
X':=
\left\{
([\overline{\mathbf x}],[\mathbf y])
\in\mathbb P^n\times\mathbb P^{m-1}
\ \middle|\
F(\overline{\mathbf x},\mathbf y)=0
\text{ for every }F\in I(X)
\right\}.
\]
\end{definition}

Provided that $X$ is not contained in either coordinate subspace, $X'$ is
equidimensional and
\[
    \dim X'=\dim X-1.
\]

The following lemma explains why this construction will apply to the varieties considered in Section \ref{section emb}.

\begin{lemma}\label{asd}
Let $Y\subseteq\mathbb A^{n+m}$ be an algebraic variety such that
$(\mathbf x,\lambda\mathbf y)\in Y$ for every
$(\mathbf x,\mathbf y)\in Y$ and every $\lambda\in K$. If
$\overline Y\subseteq\mathbb P^{n+m}$ is the projective closure of $Y$, then
$I(\overline Y)\subseteq K[\overline{\mathbf x},\mathbf y]$ is
bihomogeneous.
\end{lemma}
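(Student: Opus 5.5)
The plan is to show that $I(\overline Y)$ is homogeneous with respect to the grading by $\mathbf y$-degree; it is already homogeneous in the total grading, being the ideal of a projective variety. A polynomial that is homogeneous in total degree and whose $\mathbf y$-homogeneous components all lie in the ideal decomposes into components that are bihomogeneous in $(\overline{\mathbf x},\mathbf y)$. So it suffices to prove: if $F\in I(\overline Y)$ is homogeneous of total degree $D$ and we write $F=\sum_{j} F_j$ with $F_j$ homogeneous of degree $j$ in $\mathbf y$ (hence of degree $D-j$ in $\overline{\mathbf x}$), then every $F_j\in I(\overline Y)$.

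First I will use the standard description of the ideal of the projective closure. A homogeneous $F$ lies in $I(\overline Y)$ if and only if its dehomogenization $f(\mathbf x,\mathbf y):=F(1,\mathbf x,\mathbf y)$ lies in $I(Y)$. This holds because $Y$ is dense in $\overline Y$ under the embedding $(\mathbf x,\mathbf y)\mapsto[1:\mathbf x:\mathbf y]$, and $F$ vanishes on that image exactly when $f$ vanishes on $Y$.

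Next I take such an $F$ and set $f_j(\mathbf x,\mathbf y):=F_j(1,\mathbf x,\mathbf y)$. Each $f_j$ is homogeneous of degree $j$ in $\mathbf y$, and $f=\sum_j f_j$. Fix $(\mathbf x,\mathbf y)\in Y$. By hypothesis $(\mathbf x,\lambda\mathbf y)\in Y$ for every $\lambda\in K$, so
\[
0=f(\mathbf x,\lambda\mathbf y)=\sum_j \lambda^j f_j(\mathbf x,\mathbf y)\qquad\text{for all }\lambda\in K.
\]
Since $K$ is infinite, this polynomial in $\lambda$ is identically zero. Hence $f_j(\mathbf x,\mathbf y)=0$ for every $j$, so each $f_j\in I(Y)$. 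Because $F_j$ is homogeneous of total degree $D$ and its dehomogenization is $f_j$, the criterion above gives $F_j\in I(\overline Y)$.

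Finally I conclude as follows. Every element of $I(\overline Y)$ is a sum of its total-degree homogeneous components, which lie in the ideal because it is homogeneous. Each of those components splits further into bihomogeneous pieces $F_j$ lying in the ideal. Therefore $I(\overline Y)$ is generated by bihomogeneous polynomials.

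The only delicate point is the correspondence $F\in I(\overline Y)\iff F(1,\mathbf x,\mathbf y)\in I(Y)$ for homogeneous $F$. I will justify it by the density of $Y$ in its closure: the zero set of $F$ is closed, so it contains $\overline Y$ as soon as it contains $Y$. The rest is the routine Vandermonde-type argument above.
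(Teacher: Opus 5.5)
Your proof is correct and complete. The paper gives no argument of its own here; it only cites \cite[Lemma~22]{jeronimo2025geometric}. So there is no internal proof to compare against, and your argument supplies a self-contained justification of this standard fact.

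The delicate points all hold. The criterion $F\in I(\overline Y)\iff F(1,\mathbf x,\mathbf y)\in I(Y)$ is valid for every homogeneous $F$, not only for homogenizations of affine polynomials. That is exactly what you need, since $x_0$ may divide some of the pieces $F_j$. The Vandermonde step uses only that $K$ is infinite, and it does not require $Y$ to be irreducible.

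Conceptually, your argument unpacks an elementary fact. The set $\overline Y$ is invariant under the $\mathbb{G}_m$-action $[x_0:\mathbf x:\mathbf y]\mapsto[x_0:\mathbf x:\lambda\mathbf y]$, and the vanishing ideal of a closed set invariant under such an action is graded for the induced $\mathbf y$-grading. Working on the affine chart, as you do, avoids checking separately that the closure inherits the invariance.
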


\begin{proof}
See \cite[Lemma 22]{jeronimo2025geometric}.
\end{proof}

This bihomogeneous structure allows us to pass from the affine to the corresponding biprojective setting where we will use bidegrees to study the geometric degree of embedded algebraic vector bundles.

\begin{definition}\label{def:bidegree}
Let $X\subseteq\mathbb P^{n+m}$ be an irreducible projective variety whose ideal $I(X)$ is bihomogeneous and
$\dim X=d$. Let $\alpha,\beta\in\mathbb N_0$ satisfy $\alpha+\beta=d-1$. The
\emph{bidegree} $\deg_{\alpha,\beta}(X')$ is
\[
    \deg_{\alpha,\beta}(X')
    :=\#\bigl(X'\cap(L_1\times L_2)\bigr),
\]
where $L_1\subseteq\mathbb P^n$ and
$L_2\subseteq\mathbb P^{m-1}$ are general linear subspaces of dimensions
$n-\alpha$ and $m-1-\beta$, respectively. 
\end{definition}
We set $\deg_{\alpha,\beta}(X')=0$ whenever
$\alpha>n$ or $\beta>m-1$. Van der Waerden's formula relates the bidegree of $X'$ to the geometric degree of the projective variety $X$.

\begin{theorem}[Van der Waerden]\label{thm:vdw}
Let $X\subseteq\mathbb P^{n+m}$ be an irreducible projective variety defined
by a bihomogeneous ideal such that $X$ is not contained in either coordinate subspace, and let
$X'\subseteq\mathbb P^n\times\mathbb P^{m-1}$ be its biprojective
interpretation. Then
\begin{equation*}\label{eq:multideg}
    \deg(X)=
    \sum_{\alpha+\beta=\dim X-1}\deg_{\alpha,\beta}(X').
\end{equation*}
\end{theorem}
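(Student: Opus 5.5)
The idea is to compute $\deg(X)$ by slicing with hyperplanes whose equations mix the two groups of variables, and then let those equations degenerate until each one involves only the $\overline{\mathbf x}$ variables or only the $\mathbf y$ variables. Set $D:=\dim X$, so that $\dim X'=D-1$. By the discussion after Definition~\ref{def X'}, $X'$ is equidimensional, because $X$ is not contained in either coordinate subspace.

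First I would pass to the affine cone. Let $C\subseteq\mathbb A^{n+1}\times\mathbb A^m$ be the affine cone over $X$, of dimension $D+1$. Since $I(X)$ is bihomogeneous, $C$ is stable under the action of $K^*\times K^*$ that scales $\overline{\mathbf x}$ and $\mathbf y$ separately. Consider the open set $C^\circ\subseteq C$ where $\overline{\mathbf x}\neq0$ and $\mathbf y\neq 0$. The quotient map $C^\circ\to X'$ is a $(K^*)^2$-bundle, and $C^\circ$ is dense in $C$ because $X$ is not contained in either coordinate subspace.

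Next I would express $\deg(X)$ on the cone. It equals the number of points of $C$ lying in a general codimension-$(D+1)$ affine slice that has one affine equation (say $\ell_0=1$) and $D$ linear equations. I would instead use the degeneration in which each of the $D$ linear forms is split into a form $\lambda_i(\overline{\mathbf x})$ plus a form $\mu_i(\mathbf y)$. The cleanest route is the classical multilinear version: the multihomogeneous Hilbert polynomial. Let $h(s,t)=\dim K[\overline{\mathbf x},\mathbf y]_{(s,t)}/I(X)_{(s,t)}$ for $s,t\gg0$. Its top-degree part is
\[
\sum_{\alpha+\beta=D-1}\frac{\deg_{\alpha,\beta}(X')}{\alpha!\,\beta!}\,s^\alpha t^\beta ,
\]
and identifying these coefficients with the intersection numbers $\#(X'\cap(L_1\times L_2))$ follows by successive general hyperplane sections in each factor, as in the standard Bertini and Hilbert-polynomial argument. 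The ordinary Hilbert function of $X$ in degree $k$ is $\sum_{s+t=k}h(s,t)$. Summing the leading terms over $s+t=k$ and using
\[
\sum_{s+t=k}s^\alpha t^\beta\sim \frac{\alpha!\,\beta!}{(\alpha+\beta+1)!}\,k^{\alpha+\beta+1}
\]
(a Beta integral), the leading coefficient becomes $\frac{1}{D!}\sum_{\alpha+\beta=D-1}\deg_{\alpha,\beta}(X')$. Since the leading coefficient of the Hilbert polynomial of $X$ is $\deg(X)/D!$, the formula follows.

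The main obstacle is justifying the interchange of limits when summing $h(s,t)$ over $s+t=k$. The bigraded Hilbert function agrees with a polynomial only for $s,t\ge s_0$. The boundary strips $s<s_0$ or $t<t_0$ contribute $O(k^{D-1})$, because each $h(s,\cdot)$ is bounded by a polynomial in $t$ of degree at most $\dim$ of a fiber, which is at most $D-1$. I would control this with a uniform bound coming from a Noether normalization compatible with the bigrading. Finally, the link between Definition~\ref{def:bidegree} (counting points) and the Hilbert coefficients uses that a general $L_1\times L_2$ meets $X'$ transversally, which holds in characteristic zero by Bertini.
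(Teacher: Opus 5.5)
The paper does not prove this statement: it quotes the result from \cite{VANDERWAERDEN1978303} and uses it as a black box, which keeps the exposition short. Your Hilbert-function argument is a correct, self-contained proof by a different route.

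The core is sound. Since $I(X)$ is bihomogeneous, $\dim_K R_k=\sum_{s+t=k}h(s,t)$ for $R=K[\overline{\mathbf x},\mathbf y]/I(X)$. The Beta-integral asymptotic then turns the weights $1/(\alpha!\,\beta!)$ of the bigraded Hilbert polynomial into the uniform weight $1/D!$. This shows why the \emph{unweighted} sum of bidegrees appears; summing along the diagonal $s=t$ instead would give the Segre degree $\sum_{\alpha+\beta=D-1}\binom{D-1}{\alpha}\deg_{\alpha,\beta}(X')$.

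Your boundary estimate needs two small repairs. First, for fixed $s$, $R_{(s,\bullet)}$ is a finitely generated graded $K[\mathbf y]$-module supported in $C\cap\{\overline{\mathbf x}=0\}$, the affine cone over $\pi_2(X')$. That cone has dimension $\dim\pi_2(X')+1\leq\dim X'+1=D$. So the relevant quantity is the dimension of this projection, not of a fiber. Second, $s_0$ and $t_0$ are fixed, so the strips consist of finitely many rows and columns. No uniform bound, and hence no Noether normalization, is needed.

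The one nontrivial input is the identification of the top coefficients of the bigraded Hilbert polynomial with the point counts of Definition~\ref{def:bidegree}. This needs two things:
\begin{itemize}
\item at each step, a general form of bidegree $(1,0)$ or $(0,1)$ that is a nonzerodivisor modulo irrelevant torsion;
\item at the end, Kleiman transversality in characteristic zero, so that the zero-dimensional section is reduced.
\end{itemize}
This is standard. It is also the same identification the paper tacitly relies on when it applies \cite[Theorem~A]{CASTILLO2020107382} to point-count bidegrees.

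The opening degeneration sketch and the $(K^*)^2$-bundle remark play no role in the argument and can be dropped.
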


\begin{proof}
See \cite{VANDERWAERDEN1978303}.
\end{proof}

\section{Embedded Algebraic Vector Bundles and Direction Varieties}\label{section emb}

This section introduces embedded algebraic vector bundles and their
associated direction varieties. We establish the basic properties that will
be used throughout the paper.

\subsection{Embedded Algebraic Vector Bundles}
 \
 \medskip

The vector bundles considered in this paper will come equipped with a fixed embedding into a trivial ambient bundle. This additional structure will allow us to regard each fiber as a linear subspace of a fixed vector space and, in particular, to study how these subspaces vary with the base point. We begin by formalizing this notion.

\begin{definition}\label{definicion 1}
Let $V\subseteq\mathbb A^n$ be an irreducible algebraic variety and let
$E\subseteq\mathbb A^{n+m}$ be an algebraic variety. Let
$\pi_1\colon E\to V$ be the restriction of the projection onto the first
$n$ coordinates. We say that $E$ is an \emph{embedded algebraic vector bundle over
$V$} if the following conditions hold:
\begin{enumerate}
    \item\label{cond:fibrado-fibras} For every $p\in V$,
    \[
        \pi_1^{-1}(p)=\{p\}\times L_p,
    \]
    where $L_p\subseteq\mathbb A^m$ is a linear subspace of constant
    dimension $r$.
    \item\label{cond:fibrado-trivialidad-local} The morphism
    $\pi_1\colon E\to V$ is Zariski locally trivial: every point of $V$ has
    an open neighborhood $U$ for which there is an isomorphism
    \[
        \pi_1^{-1}(U)\simeq U\times\mathbb A^r
    \]
    over $U$ whose restriction to each fiber is linear.
\end{enumerate}
Under these conditions, we say that $E$ has rank $r$ and write
$\operatorname{rk}(E)=r$.
\end{definition}

This is the affine specialization of the notion of a geometric vector bundle
introduced in \cite[II, Exercise~5.18]{hartshorne1977algebraic}.  Unless otherwise stated, all embedded algebraic vector bundles considered
below have positive rank.

\begin{example}\label{ex:trivially-embedded-bundle}
Let $V\subseteq\mathbb A^n$ be an irreducible algebraic variety and let
$L\subseteq\mathbb A^m$ be a linear subspace of dimension $r$. The variety
$V\times L$ is an embedded algebraic vector bundle of rank $r$ over $V$. We call it a
\emph{trivially embedded bundle}.
\end{example}

The following observation gives a practical criterion to determine if a variety is an embedded algebraic vector bundle or not.

\begin{observation}\label{obs:localmente-trivial}
Let $V\subseteq\mathbb A^n$ be an irreducible algebraic variety, and let
\[
E=
Z\bigl(
I(V)+
\langle
\ell_1(\mathbf x,\mathbf y),\ldots,
\ell_s(\mathbf x,\mathbf y)
\rangle
\bigr)
\subseteq \mathbb A^{n+m},
\]
where the $\ell_i\in K[\mathbf x,\mathbf y]$ are homogeneous linear forms
in the variables $\mathbf y$. If the fibers of the projection $\pi_1:E\longrightarrow V$
have constant dimension $r$, then $E$ is an embedded algebraic vector bundle
of rank $r$ over $V$.
\end{observation}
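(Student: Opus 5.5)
Condition (1) of Definition \ref{definicion 1} holds by construction. For $p\in V$ the fiber $\pi_1^{-1}(p)$ is $\{p\}\times L_p$ with
\[
L_p=\{\mathbf y\in\mathbb A^m \mid \ell_1(p,\mathbf y)=\cdots=\ell_s(p,\mathbf y)=0\}.
\]
Each $\ell_i(p,\cdot)$ is a homogeneous linear form in $\mathbf y$, so $L_p$ is a linear subspace. By hypothesis $\dim L_p=r$ for every $p$. So the only real work is Zariski local triviality, condition (2).

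\textbf{Setup.} Write $\ell_i(\mathbf x,\mathbf y)=\sum_{j=1}^m a_{ij}(\mathbf x)y_j$ with $a_{ij}\in K[\mathbf x]$, and let $A(\mathbf x)=(a_{ij}(\mathbf x))$ be the $s\times m$ polynomial matrix. Then $L_p=\ker A(p)$, and the hypothesis says $\operatorname{rank}A(p)=m-r$ for every $p\in V$.

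\textbf{Local chart at a point.} Fix $p_0\in V$. Choose an $(m-r)\times(m-r)$ minor of $A$, with row set $I$ and column set $J$, that is nonzero at $p_0$. Let $U=V\cap\{\det A_{I,J}\neq 0\}$; this is an open neighbourhood of $p_0$ in $V$. Put $J^c=\{1,\dots,m\}\setminus J$, so $|J^c|=r$.

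On $U$ the rows indexed by $I$ have rank $m-r$, which is the full rank of $A(p)$. Hence, for $p\in U$, the kernel of $A(p)$ equals the kernel of the submatrix $A_I(p)$. The system $A_I(p)\mathbf y=0$ is solved by Cramer's rule: the coordinates $\mathbf y_J$ are determined by
\[
\mathbf y_J=-A_{I,J}(p)^{-1}A_{I,J^c}(p)\,\mathbf y_{J^c}.
\]
The entries of $A_{I,J}(p)^{-1}$ are regular functions on $U$, being polynomials divided by $\det A_{I,J}$.

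\textbf{The trivialization.} Define $\Phi\colon U\times\mathbb A^r\to\pi_1^{-1}(U)$ by
\[
\Phi(p,\mathbf t)=\bigl(p,\mathbf y\bigr),\qquad \mathbf y_{J^c}=\mathbf t,\quad \mathbf y_J=-A_{I,J}(p)^{-1}A_{I,J^c}(p)\,\mathbf t .
\]
By the previous paragraph $\Phi(p,\mathbf t)\in E$, so $\Phi$ is a morphism into $\pi_1^{-1}(U)$. Its inverse is $(p,\mathbf y)\mapsto(p,\mathbf y_{J^c})$, which is a restriction of a coordinate projection and hence a morphism. Both maps commute with $\pi_1$ and are linear on each fiber, so $\Phi$ is the required isomorphism over $U$.

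Covering $V$ by such opens $U$, one for each $p_0$, proves condition (2).

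\textbf{Main obstacle.} The only subtle step is using constant rank to replace the full system $A(p)\mathbf y=0$ by the $I$-rows on all of $U$. This needs the rank to be exactly $m-r$ everywhere on $V$, not merely generically. That is precisely the constant fiber dimension hypothesis.

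A secondary point is that $E$ is defined as a zero set, possibly non-reduced as a scheme. Since we work with varieties, $\pi_1^{-1}(U)$ is simply the reduced set described above, and $\Phi$ is an isomorphism of varieties.
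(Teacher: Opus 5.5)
Your proposal is correct and follows essentially the same argument as the paper. Both write the equations as $A(\mathbf x)\mathbf y^t=0$, use the constant rank $m-r$ to pick a maximal minor that is nonzero near each point, and apply Cramer's rule on the corresponding principal open set to get a trivialization by projecting onto the $r$ free coordinates. You also spell out a step the paper leaves implicit: on that open set, the kernel of $A(p)$ equals the kernel of the submatrix formed by the rows indexed by $I$.
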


\begin{proof}
It is clear condition \ref{cond:fibrado-fibras} is satisfied. So we only need to verify condition \ref{cond:fibrado-trivialidad-local} holds. For this, write the equations $\ell_i(\mathbf x,\mathbf y)$ as
\[
    A(\mathbf x)\mathbf y^t=0,
\]
where $A(\mathbf x)$ is an $s\times m$ matrix with entries in
$K[\mathbf x]$. Since every fiber has dimension $r$, one has
\[
    \operatorname{rank}A(p)=m-r
    \qquad\text{for every }p\in V.
\]

Fix $p\in V$ and choose an $(m-r)\times(m-r)$ minor
$h(\mathbf x)$ of $A(\mathbf x)$ such that $h(p)\neq0$. After reordering
the equations and the variables $y_i$, we may assume that this is the minor
corresponding to the last $m-r$ variables. Set
\[
    U:=\{q\in V\mid h(q)\neq0\}.
\]
For every $q\in U$, Cramer's rule gives regular functions
$a_{ij}:U \rightarrow \mathbb{A}^1$ such that the system
$A(q)\mathbf y^t=0$ is equivalent to
\[
    y_{r+j}=\sum_{i=1}^r a_{ij}(q)y_i,
    \qquad 1\leq j\leq m-r.
\]
Consequently, projection onto the first $r$ coordinates of $\mathbf y$
induces an isomorphism
\[
    \pi_1^{-1}(U)\simeq U\times\mathbb A^r
\]
over $U$ which is linear on the fibers. Since $p$ was arbitrary, $E$ is Zariski locally trivial.
\end{proof}
We are now ready to give an important example:
\begin{example}
Let $V\subseteq\mathbb A^n$ be a smooth irreducible algebraic variety. By the Jacobian criterion (see \cite[\S.29]{matsumura1980commutative}) and
Observation~\ref{obs:localmente-trivial}, its tangent bundle
$$TV\subseteq\mathbb A^{2n}, \quad TV :=\{(p,q) \in \mathbb{A}^{2n} \mid p \in V, \ q \in T_p V\}$$ is an embedded algebraic vector bundle of rank
$\dim V$ over $V$.
\end{example}

The dimension of the tangent bundle $TV$ has been studied extensively (see
\cite[Proposition~8]{jeronimo2025geometric} or \cite{Kunz1999} for a more general result). The following proposition will extend \cite[Proposition~8]{jeronimo2025geometric} from tangent bundles to embedded algebraic vector bundles.

\begin{proposition}\label{proposición completa}
Let $V\subseteq\mathbb A^n$ be an irreducible algebraic variety and let $E$
be an embedded algebraic vector bundle over $V$. Then $E$ is irreducible and
\[
    \dim E=\dim V+\operatorname{rk}(E).
\]
\end{proposition}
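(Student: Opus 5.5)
The plan is to use local triviality to produce a dense open piece of $E$ that is visibly irreducible of the right dimension, and then show that this piece is dense in all of $E$. Write $d=\dim V$ and $r=\operatorname{rk}(E)$. By condition \ref{cond:fibrado-trivialidad-local} of Definition~\ref{definicion 1}, $V$ is covered by open sets $U_i$ with isomorphisms $\pi_1^{-1}(U_i)\simeq U_i\times\mathbb A^r$ over $U_i$. Since $V$ is irreducible, each nonempty $U_i$ is irreducible of dimension $d$. Hence $U_i\times\mathbb A^r$ is irreducible of dimension $d+r$, and so is each open subset $E_i:=\pi_1^{-1}(U_i)$ of $E$. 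These $E_i$ form an open cover of $E$.

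Next I glue the pieces. Since $V$ is irreducible, any two nonempty opens $U_i,U_j$ meet, so $\pi_1^{-1}(U_i\cap U_j)$ is nonempty; the fibers are nonempty, as they contain the zero of $L_p$. This set is a nonempty open subset of both $E_i$ and $E_j$. Because $E_i$ is irreducible, it is dense in its closure $\overline{E_i}$ (closure taken in $E$), and the nonempty open subset $E_i\cap E_j$ is dense in $E_i$. Therefore
\[
\overline{E_i}=\overline{E_i\cap E_j}=\overline{E_j}.
\]
So all the $\overline{E_i}$ coincide with a single irreducible closed set $C$. Since the $E_i$ cover $E$, we get $E=\bigcup_i E_i\subseteq C$, hence $E=C$ is irreducible.

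For the dimension, $E$ is irreducible and contains the nonempty open subset $E_i$ of dimension $d+r$, so $\dim E=\dim E_i=d+r$. Alternatively, one can apply Theorem~\ref{teorema de la dimensión de la fibra} to $\pi_1$, which is surjective with all fibers of dimension $r$.

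The only real subtlety is the irreducibility step: knowing that each fiber is irreducible and that the map is surjective is not enough in general. Local triviality is exactly what rules out a component of $E$ lying over a proper closed subset of $V$, and the gluing argument above is where that hypothesis is used. Everything else is standard: products of irreducible varieties are irreducible (over algebraically closed $K$), and a nonempty open subset of an irreducible space is irreducible, dense, and of the same dimension.
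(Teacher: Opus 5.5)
Your proof is correct, and it takes a different route from the paper's.

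\textbf{Your argument.} You use the standard topological fact that a space covered by irreducible open sets that pairwise intersect is irreducible. Irreducibility of $V$ makes any two nonempty trivializing opens $U_i,U_j$ meet. Nonemptiness of the fibers (they contain $0\in L_p$) makes $\pi_1^{-1}(U_i\cap U_j)$ nonempty. Hence all the closures $\overline{E_i}$ coincide.

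\textbf{The paper's argument.} It starts from the irreducible components $C_1,\ldots,C_k$ of $E$. It uses local triviality to show that each $C_l$ contains the preimage of a trivializing neighborhood of each of its points. This gives $\dim C_l=d+r$ and shows the components are pairwise disjoint. It then deduces that each fiber $\{p\}\times L_p$ lies in a single component, so the images $\pi_1(C_j)$ are pairwise disjoint. Finally it invokes the fiber dimension theorem and Chevalley's theorem to see that each $\pi_1(C_j)$ contains a dense open subset of $V$, which forces $k=1$.

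\textbf{Comparison.} Your argument is shorter and essentially purely topological. It uses the irreducibility of $V$ directly on the trivializing cover, rather than indirectly through dominance and constructible images. So it needs neither the fiber dimension theorem nor Chevalley's theorem. The paper's route yields some extra structural information along the way, such as disjointness of components and each fiber lying in one component, but none of it is needed for the statement. The dimension count is the same in both: it is read off from a trivializing open $\pi_1^{-1}(U)\simeq U\times\mathbb A^r$.
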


\begin{proof}
Set $r:=\operatorname{rk}(E)$, and let $C_1,\ldots,C_k$ be the
irreducible components of $E$. Fix $(p,q)\in C_l$. By local triviality,
there is an irreducible open neighborhood $U\subseteq V$ of $p$ such that
\[
    \pi_1^{-1}(U)\simeq U\times\mathbb A^r.
\]
In particular, $\pi_1^{-1}(U)$ is an irreducible open subset of $E$ of
dimension $\dim V+r$, and hence it is contained in an irreducible component
of $E$. Since $\pi_1^{-1}(U)\cap C_l$ is a nonempty open subset of $C_l$,
this component must be $C_l$. Therefore
\[
    \pi_1^{-1}(U)\subseteq C_l
    \quad \text{and} \quad
    \dim C_l=\dim V+r.
\]

The same argument shows that the components are pairwise disjoint. Indeed,
if a point belonged to $C_i\cap C_j$, a trivializing open subset containing
that point would be contained in both components. Since it is nonempty and
open in each of them, this would imply $C_i=C_j$.

For every $p\in V$, the irreducible fiber
\[
    \pi_1^{-1}(p)= \{ p\} \times  L_p
\]
is the finite union of the closed subsets $(\{ p\} \times L_p)\cap C_j$. Hence, $\{p \} \times L_p$ is
contained in one of the components $C_j$. Since the components are pairwise
disjoint, this component is unique. It follows that the constructible sets
$\pi_1(C_j)$ are pairwise disjoint.

On the other hand, every fiber of
$\pi_1|_{C_j}\colon C_j\to V$ has dimension at most $r$. Since
$\dim C_j=\dim V+r$, the fiber dimension theorem gives
\[
    \dim\overline{\pi_1(C_j)}=\dim V.
\]
Thus every $\pi_1|_{C_j}$ is dominant, and Chevalley's theorem (see \cite[Chapter I, \S 5.3, Theorem 1.14]{shafarevich2013basic}) implies that
$\pi_1(C_j)$ contains a dense open subset of $V$. If $k>1$, two such dense
open subsets would intersect, contradicting the pairwise disjointness of the
sets $\pi_1(C_j)$. Therefore $k=1$, and $E$ is irreducible of dimension
$\dim V+r$.
\end{proof}

\subsection{Direction Varieties}
\ \medskip

As noted above, an embedding of the total space of a vector bundle into a
trivial ambient bundle allows us to view its fibers as linear subspaces of a
fixed vector space. This leads naturally to the following fundamental notion.

\begin{definition}\label{def: direcciones}
Let $V\subseteq\mathbb A^n$ be an irreducible algebraic variety and let
$E\subseteq\mathbb A^{n+m}$ be an embedded algebraic vector bundle over $V$. Let
$\pi_2\colon E\to\mathbb A^m$ be the projection onto the
$\mathbf y$-coordinates. We define
\[
    \mathscr{D}(E):=\overline{\pi_2(E)}
\]
and call it the \emph{variety of directions} of $E$.
\end{definition}

\begin{notation}
    For a smooth algebraic variety $V \subseteq \mathbb{A}^n$ we write
    $$\operatorname{Tan}_0(V) = \mathscr{D}(TV),$$
    and call it the variety of tangent directions of $V$.
\end{notation}

The variety of tangent directions was studied in
\cite{jeronimo2025geometric} in the case of smooth curves. It should not be
confused with the tangential variety, which will be defined later.

\medskip

The next lemma shows that the direction variety inherits irreducibility from
the total space and gives sharp bounds for its dimension.

\begin{lemma}\label{lema 7}
Let $V\subseteq\mathbb A^n$ be an irreducible algebraic variety and let
$E\subseteq\mathbb A^{n+m}$ be an embedded algebraic vector bundle of rank $r$ over
$V$. Then $\mathscr{D}(E)$ is an irreducible algebraic variety and
\[
    r\leq\dim\mathscr{D}(E)\leq\min\{r+\dim V,m\}.
\]
Moreover, $\dim\mathscr{D}(E)=r$ if and only if $E$ is trivially embedded.
\end{lemma}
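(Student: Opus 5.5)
Irreducibility and the upper bound come directly from Proposition~\ref{proposición completa} and the definition $\mathscr D(E)=\overline{\pi_2(E)}$. Since $E$ is irreducible of dimension $\dim V+r$, its image under the morphism $\pi_2$ is irreducible. Hence its closure $\mathscr D(E)$ is irreducible as well, and
\[
\dim\mathscr D(E)\leq\dim E=\dim V+r .
\]
As $\mathscr D(E)\subseteq\mathbb A^m$, we also have $\dim\mathscr D(E)\leq m$, which gives the upper bound $\min\{r+\dim V,m\}$. For the lower bound, fix any $p\in V$. Then $L_p=\pi_2(\pi_1^{-1}(p))\subseteq\mathscr D(E)$, and $L_p$ is a linear space of dimension $r$, so $\dim\mathscr D(E)\geq r$.

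For the characterization, one direction is immediate. If $E=V\times L$, then $\pi_2(E)=L$ is closed and $\mathscr D(E)=L$ has dimension $r$. Conversely, assume $\dim\mathscr D(E)=r$. For every $p\in V$, $L_p$ is an irreducible closed subset of the irreducible variety $\mathscr D(E)$ of the same dimension $r$, so $L_p=\mathscr D(E)$. Thus all fibers coincide with a single $r$-dimensional linear subspace $L:=\mathscr D(E)$. Since condition~\eqref{cond:fibrado-fibras} says $E=\bigcup_{p\in V}\{p\}\times L_p$, we get $E=V\times L$, which is trivially embedded in the sense of Example~\ref{ex:trivially-embedded-bundle}.

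No step is a serious obstacle. The only point needing care is the converse: the equality $L_p=\mathscr D(E)$ must hold for \emph{every} $p$, not just a general one. This is why we use the set-theoretic description of $E$ by its fibers, rather than a generic-fiber argument.
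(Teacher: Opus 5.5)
Your proof is correct and follows the paper's argument essentially step for step. The only cosmetic difference is in the converse: you conclude directly that $L_p=\mathscr D(E)$ for every $p$, whereas the paper argues by contradiction that two distinct fibers would be two distinct irreducible components of the irreducible variety $\mathscr D(E)$.
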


\begin{proof}
By Proposition~\ref{proposición completa}, $E$ is irreducible, and hence so
is $\mathscr{D}(E)=\overline{\pi_2(E)}$. The upper bound follows from
\[
    \dim\mathscr{D}(E)\leq\min\{\dim E,m\}
    =\min\{r+\dim V,m\},
\]
whereas the lower bound follows because $\mathscr{D}(E)$ contains every fiber
$L_p$, each of which has dimension $r$.

If $E$ is trivially embedded, then $E=V\times L$ for some
$r$-dimensional linear subspace $L\subseteq\mathbb A^m$, and therefore
$\mathscr{D}(E)=L$. Conversely, suppose that $\dim\mathscr{D}(E)=r$ and that $E$ is not
trivially embedded. Then there exist $p_1,p_2\in V$ such that, writing
\[
    \pi_1^{-1}(p_i)=\{p_i\}\times L_i,
    \qquad i=1,2,
\]
one has $L_1\neq L_2$. Since each $L_i$ is a closed irreducible subvariety
of $\mathscr{D}(E)$ and
\[
    \dim (L_i)=\dim(\mathscr{D}(E))=r,
\]
each $L_i$ is an irreducible component of $\mathscr{D}(E)$. This contradicts the
irreducibility of $\mathscr{D}(E)$.
\end{proof}

\subsection{Generic Linear Sections}
\ \medskip

In Sections \ref{asdasdasd} and \ref{asdff}, we will repeatedly restrict an
embedded algebraic vector bundle to the intersection of its base with a
general affine linear subspace. In order for the resulting invariants to be
well defined, we need to ensure that the dimension and degree of the
corresponding direction variety are independent of the chosen general
section. In this subsection, we establish this generic constancy first in the biprojective setting (see Lemma \ref{lema tecnico}) and then apply it to embedded algebraic vector bundles.

\begin{notation}
Let $T'\subseteq\mathbb P^n\times\mathbb P^{m-1}$ be a biprojective
variety, and let $\alpha,\beta\in\mathbb N_0$ satisfy $\alpha\leq n$ and $\beta\leq m-1$.
We use the following notation:
\begin{itemize}
    \item We denote the two projections from $\mathbb G(n+1,n+1-\alpha)\times\mathbb G(m,m-\beta)$
    by $\Pi_1$ and $\Pi_2$.

    \item The morphisms
    \[
        \pi_1\colon T'\to\mathbb P^n,
        \qquad
        \pi_2\colon T'\to\mathbb P^{m-1}
    \]
    denote the two natural projections.

    \item For $L\in\mathbb G(n+1,n+1-\alpha)$ and
    $H\in\mathbb G(m,m-\beta)$, we write
    \[
    T'\barwedge L
    :=T'\cap\bigl(\mathbb P(L)\times\mathbb P^{m-1}\bigr),
    \qquad
    T'\barwedge H
    :=T'\cap\bigl(\mathbb P^n\times\mathbb P(H)\bigr).
    \]
\end{itemize}
\end{notation}

A projective $k$-plane in $\mathbb P^n$ can be represented as the common
zero locus of $n-k$ homogeneous linear forms. Equivalently, it is
parametrized by a full-rank matrix in
$\mathbb A^{(n-k)\times(n+1)}$. The natural morphism from the open set of
full-rank matrices to $\mathbb G(n+1,k+1)$ is surjective and open. Hence the
notion of a general linear subspace obtained from matrices agrees with the
intrinsic notion obtained from the Grassmannian.

\begin{lemma}\label{lema tecnico}
Let $T\subseteq\mathbb P^{n+m}$ be an irreducible projective variety defined
by a bihomogeneous ideal, and let
$T'\subseteq\mathbb P^n\times\mathbb P^{m-1}$ be its biprojective
interpretation. Let $\alpha,\beta\in\mathbb N_0$ satisfy
\[
    \alpha+\beta=\dim T-1=\dim T'.
\]
Assume that:
\begin{enumerate}
    \item[(a)] for a general
    $L\in\mathbb G(n+1,n+1-\alpha)$,
    \[
        \dim(T'\barwedge L)=\dim T'-\alpha=\beta;
    \]

    \item[(b)] $\deg_{\alpha,\beta}(T')>0$.
\end{enumerate}
Then there exists a nonempty open subset
$W\subseteq\mathbb G(n+1,n+1-\alpha)$ such that, for every $L\in W$,
 $\dim\pi_2(T'\barwedge L)=\beta,$
and the degree of $\pi_2(T'\barwedge L)$ is independent of $L\in W$.
\end{lemma}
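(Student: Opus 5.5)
The plan is to build a universal family over the Grassmannian and reduce both claims to generic fiber statements for morphisms, using Theorem~\ref{teorema de la dimensión de la fibra} and Corollary~\ref{coro:fibra}. Set $G:=\mathbb G(n+1,n+1-\alpha)$ and consider the incidence variety
\[
\Sigma:=\{(L,x,y)\in G\times\mathbb P^n\times\mathbb P^{m-1}\mid (x,y)\in T',\ x\in\mathbb P(L)\}.
\]
Its fiber over $L$ under $\Pi_1$ is $T'\barwedge L$. Let $\Sigma_2\subseteq G\times\mathbb P^{m-1}$ be the closure of the image of $\Sigma$ under $(L,x,y)\mapsto(L,y)$. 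Over a dense open subset of $G$, the fiber of $\Sigma_2$ over $L$ agrees with $\pi_2(T'\barwedge L)$; this uses that $\pi_2$ is closed on the projective variety $T'\barwedge L$, and that generically the closure of the image commutes with taking fibers (Chevalley plus generic fiber dimension).

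\textbf{Step 1 (dimension).} Fix a general $L$ as in hypothesis (a). Then $\dim\pi_2(T'\barwedge L)\le\dim(T'\barwedge L)=\beta$. For the reverse inequality I will use (b). For general $(L,H)\in G\times\mathbb G(m,m-\beta)$, the set
\[
(T'\barwedge L)\cap(\mathbb P^n\times\mathbb P(H))=T'\cap(\mathbb P(L)\times\mathbb P(H))
\]
is finite and nonempty, of cardinality $\deg_{\alpha,\beta}(T')>0$. Since $\pi_2(T'\barwedge L)\cap\mathbb P(H)$ is the image of this set, it is nonempty. A projective variety that meets a general linear subspace of codimension $\beta$ has dimension at least $\beta$. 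Hence $\dim\pi_2(T'\barwedge L)=\beta$ on an open set of $L$.

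\textbf{Step 2 (degree constancy).} Let $\Sigma_2^{\beta}$ be the union of the irreducible components of $\Sigma_2$ that dominate $G$ with relative dimension $\beta$. Components with smaller fiber dimension do not affect the degree in dimension $\beta$; if needed, the paper's convention of summing degrees over components lets me restrict to $\Sigma_2^\beta$. Form
\[
\Omega:=\{(L,H,y)\mid (L,y)\in\Sigma_2^\beta,\ y\in\mathbb P(H)\}\longrightarrow G\times\mathbb G(m,m-\beta).
\]
By Step 1 this map is dominant and generically finite between varieties of the same dimension, componentwise. Corollary~\ref{coro:fibra}, applied after passing to affine charts and summing over the components, gives an open set on which the fiber cardinality is constant. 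For fixed general $L$ and general $H$, this cardinality equals $\#(\pi_2(T'\barwedge L)\cap\mathbb P(H))=\deg\pi_2(T'\barwedge L)$ in the Heintz sense. I then take $W$ to be the intersection of the projection to $G$ of this open set with the opens from Step 1. The projection of a dense open set contains a dense open set, and for $L\in W$ a general $H$ lies in the corresponding slice.

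\textbf{Main obstacle.} The delicate point is that $T'\barwedge L$, and so $\pi_2(T'\barwedge L)$, may be reducible or nonequidimensional. One must also check that the fiber of the closure $\Sigma_2$ coincides with $\pi_2(T'\barwedge L)$, with no spurious components, for $L$ in an open set. For this I would first invoke generic flatness (or generic constancy of fiber structure) for $\Sigma\to G$ and $\Sigma_2\to G$. Then I would check that intersecting with a general $\mathbb P(H)$ sees exactly the $\beta$-dimensional components, since lower-dimensional components miss a general $\mathbb P(H)$ of codimension $\beta$.
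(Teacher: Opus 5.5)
Your proposal is correct and is essentially the paper's proof. Positivity of $\deg_{\alpha,\beta}(T')$ gives $\dim\pi_2(T'\barwedge L)=\beta$ for general $L$. Then an incidence over $\mathbb G(n+1,n+1-\alpha)\times\mathbb G(m,m-\beta)$, whose fiber over $(L,H)$ is $\pi_2(T'\barwedge L)\cap\mathbb P(H)$, is handled componentwise via Corollary~\ref{coro:fibra} and sliced over $L$ to get a constant degree on an open set.

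There are two differences from the paper. First, you obtain $\dim\pi_2(T'\barwedge L)\geq\beta$ from the elementary fact that a variety meeting a general codimension-$\beta$ linear subspace has dimension at least $\beta$. The paper instead cites \cite[Theorem~A]{CASTILLO2020107382}, of which only this easy direction is used, and your version does not need $T'\barwedge L$ to be irreducible. Second, your ``main obstacle'' is not one. Since $\mathbb G(n+1,n+1-\alpha)$ and $\mathbb P^n$ are projective, the image of $\Sigma$ in $\mathbb G(n+1,n+1-\alpha)\times\mathbb P^{m-1}$ is already closed, and its fiber over every $L$ is exactly $\pi_2(T'\barwedge L)$, so no generic flatness is needed.
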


\begin{proof}
By the definition of the bidegree, there exists a nonempty open subset
\[
W_1\subseteq
\mathbb G(n+1,n+1-\alpha)\times\mathbb G(m,m-\beta)
\]
such that
\[
    \deg_{\alpha,\beta}(T')
    =\#(T'\barwedge L\barwedge H)
\]
for every $(L,H)\in W_1$. Since the projection $\Pi_1$ is open,
$\Pi_1(W_1)$ is a nonempty open subset of
$\mathbb G(n+1,n+1-\alpha)$. By hypothesis~(a), there is also a nonempty open
subset $\widehat U\subseteq\mathbb G(n+1,n+1-\alpha)$ such that
\[
    \dim(T'\barwedge L)=\beta
\]
for every $L\in\widehat U$. Set $U_2:=\Pi_1(W_1)\cap\widehat U $
and fix $L\in U_2$. Since the fiber of $W_1$ over $L$ is a nonempty open
subset of $\mathbb G(m,m-\beta)$, one has
\[
    \deg_{0,\beta}(T'\barwedge L)
    =\deg_{\alpha,\beta}(T')>0.
\]
Applying \cite[Theorem~A]{CASTILLO2020107382} to
$T'\barwedge L$, we obtain
\[
    \beta \leq\dim\pi_2(T'\barwedge L).
\]
The reverse inequality follows from
$\dim(T'\barwedge L)=\beta$. Therefore
\[
    \dim\pi_2(T'\barwedge L)=\beta
    \qquad\text{for every }L\in U_2.
\]

We now prove the constancy of the degree. Consider the incidence variety
\[
\mathcal I:=
\left\{
(\xi,(L,H))\in
\pi_2(T')\times\Pi_1^{-1}(U_2)
\ \middle|\
\text{there exists }p\in\mathbb P^n
\text{ such that }(p,\xi)\in T'\barwedge L\barwedge H
\right\}.
\]
The variety $\mathcal I$ is closed because it is obtained by projecting a
closed incidence through the projective $p$-coordinate. Consider the morphism
\[
\Psi\colon\mathcal I\longrightarrow\Pi_1^{-1}(U_2),
\qquad
\Psi(\xi,(L,H))=(L,H).
\]
For every $(L,H)\in\Pi_1^{-1}(U_2)$, its fiber is
\[
    \Psi^{-1}(L,H)
    =\pi_2(T'\barwedge L)\cap\mathbb P(H).
\]

Since $\pi_2(T'\barwedge L)$ has dimension $\beta$ for every $L\in U_2$, the
fibers of $\Psi$ are finite and nonempty for a general pair
$(L,H)\in\Pi_1^{-1}(U_2)$.
Thus $\Psi$ is dominant and generically finite. Discarding the components
that do not dominate $\Pi_1^{-1}(U_2)$ and arguing componentwise, we may
assume without loss of generality that $\mathcal I$ is irreducible. In the
general case, after avoiding the images of the pairwise intersections, the
corresponding generic cardinalities are simply added.

Applying Corollary~\ref{coro:fibra} on suitable affine open subsets and
shrinking the parameter space, there exist a nonempty open subset
$W_3\subseteq\Pi_1^{-1}(U_2)$ and an integer $N$ such that
\[
    \#\Psi^{-1}(L,H)=N
\]
for every $(L,H)\in W_3$. Set
\[
    W:=\Pi_1(W_3).
\]
Since $\Pi_1^{-1}(U_2)$ is a product and $W_3$ is open, $W$ is a nonempty
open subset of $U_2$. For every $L\in W$, the fiber of $W_3$ over $L$ is a
nonempty open subset of $\mathbb G(m,m-\beta)$. Hence, for a general $H$ in
this fiber,
\[
    \#\bigl(\pi_2(T'\barwedge L)\cap\mathbb P(H)\bigr)=N.
\]
By the definition of the degree of a projective variety, it follows that
\[
    \deg\bigl(\pi_2(T'\barwedge L)\bigr)=N
\]
for every $L\in W$.
\end{proof}

We now specialize the preceding lemma to embedded algebraic vector bundles.
Let $V\subseteq\mathbb A^n$ be an irreducible algebraic variety and let
$E\subseteq\mathbb A^{n+m}$ be an embedded algebraic vector bundle over $V$.

Denote by $\overline E\subseteq\mathbb P^{n+m}$ its projective closure and by
$\overline E'\subseteq\mathbb P^n\times\mathbb P^{m-1}$ its biprojective
interpretation. Let $L\subseteq\mathbb A^n$ be a general affine linear
subspace of codimension $\alpha$ with $0 \leq \alpha \leq d$, and denote its projective closure by
$\overline L\subseteq\mathbb P^n$. Then
\[
    \left(\overline{E\cap(L\times\mathbb A^m)}\right)'
    =\overline E'\cap
    \bigl(\overline L\times\mathbb P^{m-1}\bigr),
\]
and this variety is equidimensional of dimension
$\dim E-\alpha-1=\dim\overline E'-\alpha.$
Moreover,
\begin{equation}\label{ecuacion cono}
    \mathscr{D}\bigl(E\cap(L\times\mathbb A^m)\bigr)
    =C\left(
        \pi_2\left(
            \overline E'\cap
            \bigl(\overline L\times\mathbb P^{m-1}\bigr)
        \right)
    \right),
\end{equation}
where $C$ denotes the affine cone over a projective variety. Lemma~\ref{lema tecnico} therefore gives the following consequence:
\medskip

\begin{corollary}\label{observación del lema tecnico}
Let $V\subseteq\mathbb A^n$ be an irreducible algebraic variety and let
$E\subseteq\mathbb A^{n+m}$ be an embedded algebraic vector bundle over $V$. Let
$\alpha,\beta\in\mathbb N_0$ satisfy $\alpha+\beta=\dim V+\operatorname{rk}(E)-1$
and assume that $\deg_{\alpha,\beta}(\overline E')>0$. Then the dimension and degree
of
\[
    \mathscr{D}\bigl(E\cap(L\times\mathbb A^m)\bigr)
\]
are independent of the general affine linear subspace
$L\subseteq\mathbb A^n$ of dimension $n-\alpha$.
\end{corollary}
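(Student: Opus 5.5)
The plan is to apply Lemma~\ref{lema tecnico} with $T:=\overline E$ and $T':=\overline E'$, and then pass from projective to affine through \eqref{ecuacion cono}. The paper already notes that this is the intended route.

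First I would check the standing hypotheses of Lemma~\ref{lema tecnico}. By Proposition~\ref{proposición completa}, $E$ is irreducible of dimension $d+r$ with $d=\dim V$ and $r=\operatorname{rk}(E)$, so $\overline E$ is irreducible of the same dimension. The fibers $L_p$ are linear, so $E$ is stable under $(\mathbf x,\mathbf y)\mapsto(\mathbf x,\lambda\mathbf y)$. Lemma~\ref{asd} then says that $I(\overline E)$ is bihomogeneous. Moreover, $\overline E$ is not contained in either coordinate subspace, since $V\neq\emptyset$ and $r>0$. Hence $\overline E'$ is equidimensional of dimension $d+r-1=\alpha+\beta$. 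Hypothesis (b) of the lemma is exactly the assumption $\deg_{\alpha,\beta}(\overline E')>0$.

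Next, for hypothesis (a), I would use the identity $\bigl(\overline{E\cap(L\times\mathbb A^m)}\bigr)'=\overline E'\cap(\overline L\times\mathbb P^{m-1})$ stated before the corollary. That variety has dimension $\dim\overline E'-\alpha=\beta$ for a general affine $L$ of codimension $\alpha$. The point to verify is that ``general affine $L$ with closure $\overline L$'' and ``general $L\in\mathbb G(n+1,n+1-\alpha)$'' define the same genericity. The subspaces contained in the hyperplane $\{x_0=0\}$ form a proper closed subset of the Grassmannian. Its complement is in bijection with affine subspaces of codimension $\alpha$, and the parametrization by full-rank matrices is open and surjective. So open dense sets correspond, and (a) holds.

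Lemma~\ref{lema tecnico} then gives a nonempty open $W\subseteq\mathbb G(n+1,n+1-\alpha)$ on which $\pi_2(\overline E'\cap(\overline L\times\mathbb P^{m-1}))$ has dimension $\beta$ and a constant degree $N$. By \eqref{ecuacion cono}, the direction variety $\mathscr D(E\cap(L\times\mathbb A^m))$ is the affine cone over this projective variety. Its dimension is therefore $\beta+1$, and its degree equals that of the projective base, namely $N$: a general linear section of complementary dimension of the cone through a general point matches a general projective section of the base. Intersecting $W$ with the open set of affine $L$ gives independence over general $L$.

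I expect the main obstacle to be the cone/degree compatibility and the justification of \eqref{ecuacion cono} for general $L$. Since the paper states the displayed identities as facts, I would cite them. The remaining care goes into the matching of genericity notions described above.
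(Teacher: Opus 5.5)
Your proposal is correct and follows the paper's own proof: apply Lemma~\ref{lema tecnico} to $T=\overline E$, $T'=\overline E'$, then pass to $\mathscr{D}\bigl(E\cap(L\times\mathbb A^m)\bigr)$ through \eqref{ecuacion cono}, using that the affine cone has the same degree and one more dimension. Your explicit checks spell out what the paper leaves implicit: bihomogeneity via Lemma~\ref{asd}, hypothesis (a), and the matching of the affine and Grassmannian notions of ``general''. One small addition: the section-dimension identity you cite is stated only for $\alpha\le d$, and here $\alpha\le d$ is forced by $\deg_{\alpha,\beta}(\overline E')>0$.
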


\begin{proof}
Apply Lemma~\ref{lema tecnico} to $T=\overline E$ and
$T'=\overline E'$, and use
\eqref{ecuacion cono}, together with the fact that taking the affine cone
does not change the degree.
\end{proof}

This motivates the following notation.

\begin{notation}\label{notacion Ea}
Let $E\subseteq\mathbb A^{n+m}$ be an embedded algebraic vector bundle of rank $r$ over an irreducible variety $V\subseteq\mathbb A^n$ of dimension $d$, and
let $1\leq a\leq d$. We write
\[
    E_a:=E\cap(L\times\mathbb A^m)
\]
for a general affine linear subspace
$L\subseteq\mathbb A^n$ of dimension $n-(d-a)$, and set
\[
    V_a:=V\cap L.
\]
\end{notation}

The restriction $E_a$ is an embedded algebraic vector bundle of rank $r$ over
$V_a$. Moreover, Bertini's theorem implies that $V_a$ is irreducible and
$\dim (V_a)=a$ (see \cite{jouanolou1983,flenner1999joins}). Consequently, by Proposition~\ref{proposición completa} we have: $\dim( E_a)=a+r.$

\section{A General Defect Theory}\label{asdff}

Let \(V\subseteq\mathbb A^n\) be an irreducible algebraic variety of
dimension \(d\), and let
\(E\subseteq\mathbb A^{n+m}\) be an embedded algebraic vector bundle of rank
\(r\) over \(V\). Its variety of directions
\[
    \mathscr{D}(E)=\overline{\pi_2(E)}
\]
is given by the Zariski closure of the union of the linear spaces \(L_p\subseteq\mathbb A^m\), as \(p\)
varies in \(V\). Lemma~\ref{lema 7} gives
\[
    r\leq\dim\mathscr{D}(E)\leq\min\{d+r,m\}.
\]
The upper bound \(\min\{d+r,m\}\) is the natural expected dimension of
\(\mathscr{D}(E)\): it is the maximal dimension that a \(d\)-dimensional family of
\(r\)-dimensional linear spaces can sweep inside \(\mathbb A^m\). 

\begin{definition}\label{defect definition}
Let $V \subseteq \mathbb{A}^n$ be an irreducible algebraic variety of dimension $d$ and $E \subseteq \mathbb{A}^{n+m}$ be an embedded algebraic vector bundle over $V$ of rank $r$.
\begin{itemize}
        \item The directional defect of \(E\) is
\[
    \Delta(E):=\min\{d+r,m\}-\dim\mathscr{D}(E).
\]
    \item The Grassmann classifying map of \(E\) is the regular map 
\[
\Gamma_E\colon
V\longrightarrow\mathbb G(m,r),
\qquad
p\longmapsto L_p,
\]

\item The Grassmann defect of \(E\) is
\[
    \Delta_G(E)
    :=
    d-\dim\overline{\Gamma_E(V)}.
\]
\end{itemize}

We say that \(E\) is \emph{defective} if \(\Delta(E)>0\) and  {Grassmann defective} if \(\Delta_G(E)>0\).
\end{definition}
Thus, \(\Delta(E)\) measures the failure of the fibers to sweep out a variety in $\mathbb{A}^m$ of the expected dimension while $\Delta_G(E)$ measures the failure of the fibers of $E$ regarding them as points inside the Grassmannian.

\medskip

The section is organized as follows. In Subsection~\ref{subsec:swept-linear},
we show that the defect theory of embedded algebraic vector bundles is
equivalent to the defect theory of varieties swept out by linear spaces
(see Definition~\ref{def: scoper} and Proposition~\ref{defectia}).
In Subsection~\ref{subsec: effective deff}, we establish the numerical
criterion for directional defectivity, Theorem~\ref{thm defect}, stated as
Theorem A in the introduction. In
Subsection~\ref{subsec:Gauss-bundle-duality}, we compare directional and
Grassmann defectivity and study the relation between the two notions.

\subsection{Varieties swept out by linear spaces}\label{subsec:swept-linear}

\begin{definition}\label{def: scoper}
    Let $X \subseteq \mathbb{P}^m$ be an irreducible projective variety.
    \begin{itemize}
        \item  We say $X$ is \emph{swept out by linear spaces} if there exist integers $n \in \mathbb{N}$, $k \in \mathbb{N}_0$, an irreducible quasi-projective variety $B \subseteq \mathbb{P}^n$ and a regular map $\gamma_X: B \rightarrow \mathbb{G}(m+1,k+1)$ such that: $$X = \overline{\left \{ [v] \in \mathbb{P}^m \mid \exists \ p \in B , v \in \gamma_X(p) \setminus \{0\} \right \}} = \overline{\bigcup_{p \in B} \mathbb{P}(\gamma_X(p))},$$ where the bars denote the Zariski closure.
        \item  We define the defect of \((X,\gamma_X)\), by
$$
\delta(X,\gamma_X):=
\min\{\dim B+k,m\}-\dim X.
$$
    \end{itemize}
\end{definition}

Notice that this formulation also captures families of
translated affine linear spaces. Indeed, every affine $k$-plane
$q+L\subseteq\mathbb A^m$ determines the $(k+1)$-dimensional vector
subspace
\[
\langle (1,q) \rangle\oplus\bigl(\{0\}\times L\bigr)
\subseteq\mathbb A^{m+1},
\]
whose projectivization is the projective closure of $q+L$. Conversely,
every $(k+1)$-dimensional vector subspace not contained in
$\{y_0=0\}$ determines, after dehomogenization, a translated affine
$k$-plane.

\begin{observation}\label{rmk: obse}
Let $X \subsetneq \mathbb{P}^n$ be an irreducible projective variety. Then the tangential variety $\Tan(X),$ the secant variety $\operatorname{Sec}(X),$ and the dual variety $X^\vee$ (see Definition \ref{def tangt}) are swept out by linear spaces in the sense of Definition \ref{def: scoper}.
\end{observation}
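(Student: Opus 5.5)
The plan is to exhibit, for each of the three varieties, explicit data $(n', k, B, \gamma_X)$ as in Definition~\ref{def: scoper}, and to check two things: that $\gamma_X$ is a regular map into the appropriate Grassmannian, and that the closure of the union of the corresponding projective linear spaces equals the variety in question. Throughout, $d=\dim X\ge 1$ and $X\subseteq\mathbb P^n$, so the ambient space of Definition~\ref{def: scoper} is $\mathbb P^n$ (playing the role of $\mathbb P^m$ there). In the dual case it is $(\mathbb P^n)^\vee$.

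\emph{Tangential variety.} Take $B=X_{\operatorname{reg}}$, which is an open dense subset of the irreducible variety $X$. It is therefore an irreducible quasi-projective subvariety of $\mathbb P^n$. Set $k=d$ and let $\gamma_X$ be the Gauss map $g\colon X_{\operatorname{reg}}\to\mathbb G(n+1,d+1)$. Its regularity is seen in an affine chart: by the Jacobian criterion, the tangent space $T_pV$ is the kernel of a Jacobian matrix whose rank is constant on the smooth locus. Its Plücker coordinates are therefore given locally by minors, exactly as in the Cramer-rule argument of Observation~\ref{obs:localmente-trivial}. By definition $\mathbb P(g(p))=\mathbb T_pX$, so the union $\bigcup_p\mathbb P(\gamma_X(p))$ is precisely the set whose closure defines $\Tan(X)$. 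That closure is irreducible, being the closure of the image of the irreducible incidence variety $\{(p,v):v\in g(p)\}$.

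\emph{Secant variety.} Take $B=(X\times X)\setminus\Delta_X$. To land inside some $\mathbb P^{n'}$, embed $X\times X$ via the Segre embedding into $\mathbb P^{(n+1)^2-1}$. Then $B$ is an open subset of the irreducible variety $X\times X$, hence irreducible and quasi-projective. Set $k=1$ and $\gamma_X(p,q)=\operatorname{Span}(\hat p,\hat q)\in\mathbb G(n+1,2)$, where $\hat p,\hat q$ are representatives. This map is regular: in Plücker coordinates it is $(p,q)\mapsto[\hat p\wedge\hat q]$, which is bihomogeneous of bidegree $(1,1)$ and nonvanishing off the diagonal. Since $\mathbb P(\gamma_X(p,q))=\langle p,q\rangle$, the union reproduces the definition of $\operatorname{Sec}(X)$.

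\emph{Dual variety.} Again take $B=X_{\operatorname{reg}}$, and send $p$ to the annihilator $\mathbb T_pX^{\perp}\subseteq (K^{n+1})^\vee$. This is a subspace of dimension $n-d$, so $k=n-d-1\ge 0$; here we use $X\subsetneq\mathbb P^n$, which gives $d\le n-1$. The map $\gamma_X$ is the composition of the Gauss map with the canonical isomorphism $\mathbb G(n+1,d+1)\cong\mathbb G(n+1,n-d)$, $W\mapsto W^\perp$, so it is regular. A point $[\ell]$ lies in $\mathbb P(\mathbb T_pX^\perp)$ if and only if $\ell$ vanishes on $\mathbb T_pX$, that is, if and only if $\mathbb T_pX\subseteq Z(\ell)$. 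Hence the union of these spaces is exactly the set whose closure is $X^\vee$.

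The only step that needs genuine care is the regularity of the Gauss map, and through it of the maps in the first and third cases. I would handle it by the local minor argument, taking the rank of the Jacobian to be $n-d$ on $X_{\operatorname{reg}}$. The remaining checks are set-theoretic identities that follow directly from the definitions.
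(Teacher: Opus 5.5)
Your proposal is correct and matches the paper's proof: the paper uses exactly the same data, namely $B=X_{\operatorname{reg}}$ with the Gauss map $g$ for $\Tan(X)$, the same base with $\iota\circ g$ (where $\iota$ sends a subspace to its annihilator) for $X^\vee$, and $B=(X\times X)\setminus\Delta$ with $(p,q)$ sent to the $2$-dimensional subspace projectivizing to $\langle p,q\rangle$ for $\operatorname{Sec}(X)$. Your extra checks are details the paper leaves implicit: regularity via local minors and Pl\"ucker coordinates, the Segre embedding so that $B$ sits in a projective space, and the count $k=n-d-1\geq 0$ in the dual case.
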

\begin{proof}
For the tangential variety, take $B=X_{\operatorname{reg}}$
and $\gamma_{\Tan(X)}=g$, the Gauss map.

For the dual variety, take the same base and
$\gamma_{X^\vee}=\iota\circ g$, where $\iota$ is the
isomorphism of Grassmannians sending a linear subspace
$L\subseteq K^{n+1}$ to its annihilator
$L^\perp\subseteq(K^{n+1})^*$.

For the secant variety, take
$B=(X\times X)\setminus\Delta$, where $\Delta$ denotes the
diagonal, and let $\gamma_{\operatorname{Sec}(X)}$ send $(p,q)$ to the
two-dimensional linear subspace whose projectivization is
$\langle p,q\rangle$.
\end{proof}

In particular it is easy to see (following the notation of the previous proof) the defect notions $\delta(\operatorname{Tan}(X),g),$ $\delta(X^\vee,\iota \circ g)$ and $\delta(\operatorname{Sec}(X),\gamma_{\operatorname{Sec}(X)})$ match with the corresponding defect notions from the literature (see \cite{BaurDraismaDeGraaf2007,ChiantiniCiliberto2010, https://doi.org/10.1112/blms.12379}).

The following proposition shows that the defect theory of
varieties swept out by linear spaces can be equivalently
formulated in terms of embedded algebraic vector bundles.

\begin{proposition}\label{defectia}
Let $X\subseteq\mathbb P^m$ be an irreducible projective variety and
let $0\leq k\leq m$. The following conditions are equivalent:
\begin{enumerate}
    \item $X$ is swept out by $k$-dimensional linear spaces.

    \item There exists an irreducible affine variety
    $V\subseteq\mathbb A^n$ and an embedded algebraic vector bundle $E\subseteq V\times\mathbb A^{m+1}$
    of rank $k+1$ such that $\mathscr{D}(E)=C(X)$.
\end{enumerate}

Moreover, under this correspondence: 
$$\delta(X,\gamma_X) = \Delta(E), \qquad \gamma_X|_{V} = \Gamma_E.$$
\end{proposition}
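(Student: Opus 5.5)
Both implications will be proved by explicit constructions, and the defect identity will then be read off from a dimension count.

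For (2)$\Rightarrow$(1), let $E\subseteq V\times\mathbb A^{m+1}$ have rank $k+1$ and satisfy $\mathscr D(E)=C(X)$. Take $B:=V$, regarded as a quasi-projective subvariety of $\mathbb P^n$ through the standard chart. Take $\gamma_X:=\Gamma_E$. This map is regular: on a trivializing open set $U$, local triviality gives $k+1$ regular sections forming a frame, and the Plücker coordinates of the frame are regular and do not vanish simultaneously. Then
\[
\bigcup_{p\in V}\mathbb P(L_p)=\mathbb P\bigl(\pi_2(E)\setminus\{0\}\bigr).
\]
Taking closures, and using that closure commutes with projectivization of cones, this union has closure $\mathbb P(\mathscr D(E))=X$. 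Hence $X$ is swept out by $k$-dimensional linear spaces.

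For (1)$\Rightarrow$(2), start from $(B,\gamma_X)$. First cover $B$ by affine opens and replace $B$ by one of them, $V\subseteq\mathbb A^n$. This is legitimate because $B$ is irreducible: the union of $\mathbb P(\gamma_X(p))$ over a dense open subset of $B$ already has closure $X$, since the incidence variety is irreducible and the restricted part is dense in it. Shrinking does not change $\dim B$, so $\delta$ is unchanged. Next consider the tautological bundle
\[
E:=\{(p,v)\in V\times\mathbb A^{m+1}\mid v\in\gamma_X(p)\}.
\]
This is the pullback of the tautological subbundle on $\mathbb G(m+1,k+1)$, which is Zariski locally trivial on the standard Plücker charts; pulling back along $\gamma_X$ gives local triviality over $V$ with fibers of dimension $k+1$. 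In affine coordinates $E$ is closed in $V\times\mathbb A^{m+1}$: on each chart it is cut out by the vanishing of the $(k+2)$-minors of the matrix formed by the frame together with $v$. Thus $E$ is an embedded algebraic vector bundle, and $\mathscr D(E)=\overline{\pi_2(E)}=C(X)$. By construction $\Gamma_E=\gamma_X|_V$.

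For the defect identity, set $d=\dim V=\dim B$ and $r=k+1$. Since $\dim C(X)=\dim X+1$,
\[
\Delta(E)=\min\{d+k+1,m+1\}-\dim X-1=\min\{d+k,m\}-\dim X=\delta(X,\gamma_X).
\]

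I expect the main obstacle to be the technical points in the construction rather than the count itself: checking that shrinking $B$ to an affine open preserves $X$, which uses the irreducibility of the incidence variety, and checking the local triviality of the pulled-back tautological bundle. Both are handled by the standard affine charts of the Grassmannian.
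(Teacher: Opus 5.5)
Your proposal is correct and follows essentially the same route as the paper. For (1)$\Rightarrow$(2) you restrict $\gamma_X$ to a dense affine open $V\subseteq B$ and take the tautological bundle, trivialized over the standard Grassmannian charts; for (2)$\Rightarrow$(1) you take $B=V$ and $\gamma_X=\Gamma_E$, and the defect identity follows from $\dim C(X)=\dim X+1$ and $\operatorname{rk}(E)=k+1$, while you also supply details the paper leaves implicit (regularity of $\Gamma_E$ via Plücker coordinates of a local frame, closedness of $E$ via minors, and the explicit dimension count).
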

\begin{proof}
Assume first that $X$ is swept out by $k$-dimensional linear spaces,
and let
\[
\Gamma:B\longrightarrow\mathbb G(m+1,k+1)
\]
be its corresponding regular map. Choose a dense affine open subset
$V\subseteq B$ and define
\[
E_\Gamma
:=
\left\{
(p,v)\in V\times\mathbb A^{m+1}
\;\middle|\;
v\in\Gamma(p)
\right\}.
\]
On the inverse image of each standard affine chart of the
Grassmannian, the subspace $\Gamma(p)$ is represented by the columns
of a matrix whose entries are regular functions of $p$. Hence $E_\Gamma$ is an embedded algebraic vector bundle over $V$ by Observation \ref{obs:localmente-trivial}. Since $V$ is dense in $B$,
\[
\mathscr{D}(E_\Gamma)
=
\overline{\bigcup_{p\in V}\Gamma(p)}
=
C\left(
\overline{\bigcup_{p\in B}
\mathbb P\bigl(\Gamma(p)\bigr)}
\right)
=
C(X).
\]

Conversely, let $E\subseteq V\times\mathbb A^{m+1}$
be an embedded algebraic vector bundle of rank $k+1$ such that
$\mathscr{D}(E)=C(X)$, and let
\[
\Gamma_E:V\longrightarrow\mathbb G(m+1,k+1)
\]
be its Grassmann direction map. The implication follows taking $B = V$ and $\gamma_X = \Gamma_E$ and noticing that the cone and the projectivization are inverse operations. The last equalities follow immediately by Definition \ref{defect definition}.
\end{proof}

\begin{example}\label{ejemplo fibrados}
Let $X\subsetneq\mathbb P^n$ be an irreducible projective variety
of dimension $d>0$, and assume that its affine chart
$V:=X\cap\{x_0\neq 0\}$ is nonempty and smooth.
Applying the correspondence of Proposition~\ref{defectia}
to the families from Observation \ref{rmk: obse} yields the
following embedded algebraic vector bundles:
\begin{itemize}
    \item For the tangential variety:
    \[
        \mathbb{T}V_{\mathrm{cone}}
        :=
        \left\{
            \bigl(p,(\lambda,\lambda p+v)\bigr)
            \in V\times\mathbb A^{n+1}
            \;\middle|\;
            \lambda\in K,\ v\in T_pV
        \right\}.
    \]

    \item For the dual variety:
    \[
        N^*(V)
        :=
        \left\{
            (p,\ell)\in V\times(K^{n+1})^*
            \;\middle|\;
            \ell\in g([1:p])^\perp
        \right\}.
    \]

    \item For the secant variety, choose a nonempty affine open
    subset $U\subseteq(V\times V)\setminus\Delta_V$.
    For instance, one may take $U=\{(p,q)\mid p_i\neq q_i\}$
    whenever $f:V\rightarrow \mathbb{A}^1, \ f(\mathbf{x})=x_i$ is nonconstant.
    The secant map then gives the rank-two bundle
    \[
        \mathcal S(V)
        :=
        \left\{
            \bigl((p,q),\lambda(1,p)+\mu(1,q)\bigr)
            \in U\times\mathbb A^{n+1}
            \;\middle|\;
            \lambda,\mu\in K
        \right\}.
    \]
    Notice the smoothness of $V$ is not required to do this.
\end{itemize}
Their varieties of directions are the affine cones over
$\Tan(X)$, $X^\vee$, and $\operatorname{Sec}(X)$, respectively. 
\end{example}

The scope of this framework extends beyond the three constructions considered above. Indeed, several other classical varieties arise naturally as varieties swept out by linear spaces in the sense of Definition~\ref{def: scoper}. Some remarkable examples are: higher secant varieties \(\operatorname{Sec}_s(X)\)  (see, for instance, \cite{ChiantiniCiliberto2010}), osculating varieties  (see \cite{BallicoFontanari2004}) and more generally, joins varieties (see \cite[Chapter~4]{flenner1999joins}).

\subsection{Effective defectivity Criteria}\label{subsec: effective deff}
\ \medskip

In this subsection, we establish an effective numerical criterion characterizing the defectivity of an algebraic vector bundle, proving Theorem~A from the introduction (see Theorem \ref{thm defect} below). We begin with a Lemma.

\begin{lemma}\label{observación positividad}
Let $V\subseteq\mathbb A^n$ be an irreducible algebraic variety of dimension
$d$, and let $E\subseteq\mathbb A^{n+m}$ be an embedded algebraic vector bundle of
positive rank $r$ over $V$. For $1\leq a\leq d$, the following conditions
are equivalent:
\begin{enumerate}
    \item $\deg_{d-a,a+r-1}(\overline E')>0$.

    \item $\dim(\mathscr{D}(E_a))=\dim (E_a)=a+r$.
\end{enumerate}
\end{lemma}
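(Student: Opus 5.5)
We plan to prove Lemma~\ref{observación positividad} by reducing the bidegree $\deg_{d-a,a+r-1}(\overline E')$ to a bidegree of the form $\deg_{0,\beta}$ of a linear section, and then applying \cite[Theorem~A]{CASTILLO2020107382}, exactly as in the first half of the proof of Lemma~\ref{lema tecnico}. Set $\alpha:=d-a$ and $\beta:=a+r-1$, so that $\alpha+\beta=d+r-1=\dim\overline E'$. Let $L\subseteq\mathbb A^n$ be a general affine linear subspace of codimension $\alpha$, with closure $\overline L$. By the discussion preceding Corollary~\ref{observación del lema tecnico}, we have
\[
T'_L:=\overline E'\cap(\overline L\times\mathbb P^{m-1})=\bigl(\overline{E_a}\bigr)'.
\]
This variety is equidimensional of dimension $\beta$, so hypothesis~(a) of Lemma~\ref{lema tecnico} holds automatically. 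Moreover, by \eqref{ecuacion cono}, $\mathscr D(E_a)=C(\pi_2(T'_L))$. Hence $\dim\mathscr D(E_a)=\dim\pi_2(T'_L)+1$, and condition~(2) of the lemma is equivalent to $\dim\pi_2(T'_L)=\beta$, that is, to $\pi_2|_{T'_L}$ being generically finite onto its image.

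The key identity we would establish is
\[
\deg_{\alpha,\beta}(\overline E')=\deg_{0,\beta}(T'_L)
\]
for general $L$. This follows from the definition of bidegree: cutting with a general $\mathbb P(L)\times\mathbb P(H)$ can be done in two steps, first fixing a general $L$ and then taking $H$ general in the nonempty open fiber of the good set over $L$. This is the same openness argument with $\Pi_1$ used in Lemma~\ref{lema tecnico}. One must also check that a general projective $(n-\alpha)$-plane is the closure of a general affine one; this holds because the planes not contained in $\{x_0=0\}$ form a dense open set.

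Next we apply \cite[Theorem~A]{CASTILLO2020107382} to $T'_L$, using $\overline E_a$ irreducible, which holds by Bertini and Proposition~\ref{proposición completa}. That theorem says that $\deg_{0,\beta}(T'_L)>0$ if and only if $\dim\pi_2(T'_L)\geq\beta$, i.e.\ the second projection has maximal-dimensional image. The direction $(1)\Rightarrow(2)$ is then literally the argument already given in Lemma~\ref{lema tecnico}, combined with the upper bound $\dim\pi_2(T'_L)\leq\dim T'_L=\beta$.

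For $(2)\Rightarrow(1)$ we can also argue directly. If $\dim\pi_2(T'_L)=\beta$, then a general $\mathbb P(H)$ of codimension $\beta$ meets $\pi_2(T'_L)$ in finitely many, and at least one, points. The preimages in $T'_L$ are then finite and nonempty, and this gives $\deg_{0,\beta}(T'_L)>0$. The main obstacle is careful bookkeeping of generality. Condition~(2) refers to a general $E_a$, which is well defined by Corollary~\ref{observación del lema tecnico} and Bertini. So we must make sure that the open set of $L$ for which $\dim\mathscr D(E_a)$ takes its generic value intersects the open set where the bidegree computation is valid. Both sets are nonempty opens in an irreducible Grassmannian, so they intersect.
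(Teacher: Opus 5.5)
Your proposal is correct and follows the paper's proof. Both arguments reduce to $\deg_{0,a+r-1}(\overline E_a')$ by genericity of the section, translate (2) via \eqref{ecuacion cono} into $\dim\pi_2(\overline E_a')=a+r-1$, and conclude with \cite[Theorem~A]{CASTILLO2020107382}. The only differences are minor: the paper dispatches the case $a+r>m$ up front, whereas your argument covers it implicitly since both conditions then fail, and your direct argument for $(2)\Rightarrow(1)$ is a harmless variation that bypasses the cited theorem in that direction.
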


    \begin{proof}
If $a+r>m$, the bidegree vanishes as we are cutting with more hyperplanes than the second factor dimension, while
\[
    \dim\mathscr{D}(E_a)\leq m<a+r,
\]
so the equivalence holds. We may therefore assume that $a+r\leq m$.
    
By the definition of $E_a$ and the genericity of the linear section,
\[
    \deg_{d-a,a+r-1}(\overline E')
    =
    \deg_{0,a+r-1}(\overline E_a').
\]
Moreover, $\overline E_a'$ is equidimensional of dimension $a+r-1$ and,
by~\eqref{ecuacion cono}, we have $\mathscr{D}(E_a)=C\bigl(\pi_2(\overline E_a')\bigr).$
Therefore
\[
    \dim\mathscr{D}(E_a)=a+r
    \quad\Longleftrightarrow\quad
    \dim\pi_2(\overline E_a')=a+r-1.
\]
The equivalence now follows from
\cite[Theorem~A]{CASTILLO2020107382}.
\end{proof}

In particular if we consider $a = d$ in Lemma \ref{observación positividad} then Theorem A follows from embedded algebraic vector bundles $E$ in the stable range $d+r \leq m$. The following proposition will let us extend this result to the whole range:

\begin{proposition}
\label{prop:recovery-successive-sections}
Let \(V\subseteq\mathbb A^n\) be an irreducible algebraic variety of
dimension \(d\), and let
\(E\subseteq\mathbb A^{n+m}\) be an embedded algebraic vector bundle of
rank \(r\) over \(V\). Assume that $\Delta(E)<\min\{d,m-r\}$. Then
\[
    \mathscr{D}(E_a)=\mathscr{D}(E)
    \quad\text{for every } a \in \mathbb{N}_0 \text{ such that }
    \min\{d,m-r\}-\Delta(E)\leq a\leq d.
\]
In particular, the corresponding bidegrees from
Lemma~\ref{observación positividad} vanish for
$a>\min\{d,m-r\}-\Delta(E)$, whereas the one corresponding
to $a=\min\{d,m-r\}-\Delta(E)$ is nonzero.
\end{proposition}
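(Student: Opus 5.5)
Set $s:=\min\{d,m-r\}-\Delta(E)$ and $D:=\dim\mathscr D(E)=\min\{d+r,m\}-\Delta(E)$, so that $D=s+r$ in both regimes. The hypothesis $\Delta(E)<\min\{d,m-r\}$ gives $s\geq 1$ and $D>r$. I would first record two monotonicity facts. Let $a\leq a'$. Choose the general affine subspaces nested, $L_a\subseteq L_{a'}$; genericity is preserved, since a general subspace of a general subspace is general. Then $E_a\subseteq E_{a'}\subseteq E$, hence $\mathscr D(E_a)\subseteq\mathscr D(E_{a'})\subseteq\mathscr D(E)$. Moreover $\dim\mathscr D(E_a)\leq\min\{a+r,D\}$, and every $\mathscr D(E_a)$ is irreducible by Lemma~\ref{lema 7} applied to the bundle $E_a$ over the irreducible $V_a$.

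The core step is a growth statement: if $\mathscr D(E_{a})\neq\mathscr D(E)$, then $\dim\mathscr D(E_{a+1})\geq\dim\mathscr D(E_a)+1$. To prove it, write $V_a=V_{a+1}\cap H$ for a general hyperplane $H$ in $L_{a+1}$.

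First, $\mathscr D(E_{a+1})\neq\mathscr D(E_a)$. Choose a point $p\in V_{a+1}$ with $L_p\not\subseteq\mathscr D(E_a)$. Such a point exists: otherwise every $L_p$ with $p\in V_{a+1}$ lies in $\mathscr D(E_a)$. Iterating up to $a'=d$ (or arguing with the full family) would force $\mathscr D(E)=\mathscr D(E_a)$. To make this rigorous I would use the constructible set of $p\in V$ with $L_p\subseteq\mathscr D(E_a)$. Since $\mathscr D(E_a)\neq\mathscr D(E)$, this set is a proper closed subset of $V$, and a general $V_{a+1}$ of dimension $\geq1$ is not contained in it.

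Second, we get a strict dimension increase. Because $\mathscr D(E_a)\subsetneq\mathscr D(E_{a+1})$ and both are irreducible, $\dim\mathscr D(E_{a+1})>\dim\mathscr D(E_a)$. Irreducibility turns strict inclusion into a strict dimension jump.

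With the growth statement in hand, start from $\dim\mathscr D(E_1)\geq r+1$ when $D>r$. This holds because $E_1$ is not trivially embedded: the set of $p$ with $L_p=L_{p_0}$ is proper closed, so a general curve section meets its complement, and then Lemma~\ref{lema 7} applies. Induction gives $\dim\mathscr D(E_a)\geq\min\{a+r,D\}$. Together with the upper bound, this yields $\dim\mathscr D(E_a)=\min\{a+r,D\}$. For $a\geq s$ this equals $D$, so $\mathscr D(E_a)\subseteq\mathscr D(E)$ is an inclusion of irreducible varieties of equal dimension, hence an equality.

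For the bidegree statement, apply Lemma~\ref{observación positividad}. For $a>s$ we have $\dim\mathscr D(E_a)=D=s+r<a+r$, so the bidegree vanishes. For $a=s$ we have $\dim\mathscr D(E_s)=s+r=\dim E_s$, so it is positive.

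The main obstacle is the first half of the growth step: showing that a general section $V_{a+1}$ contains a point whose fiber escapes $\mathscr D(E_a)$, while $V_a$ is itself cut from $V_{a+1}$. The danger is circularity, since $\mathscr D(E_a)$ depends on the section. I would handle it by fixing $V_{a+1}$ first and then taking $H$ general. The locus $\{p\in V_{a+1}: L_p\subseteq\mathscr D(E_a)\}$ could a priori be all of $V_{a+1}$ for every $H$. In that case, moving $H$ over a dense family and noting that $\mathscr D(E_a)$ has constant dimension (Corollary~\ref{observación del lema tecnico}), one sees that $\mathscr D(E_a)$ would equal $\mathscr D(E_{a+1})$. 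Repeating the argument up to $a=d$ would then give $\mathscr D(E_a)=\mathscr D(E)$, contradicting the assumption of the growth statement.
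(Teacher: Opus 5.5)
Your overall architecture (nested general sections, strict growth of the irreducible varieties $\mathscr D(E_a)$, equality once the dimension reaches $\dim\mathscr D(E)=s+r$) is sound, and your deduction of the bidegree statement from Lemma~\ref{observación positividad} is correct. The gap is at the step you yourself single out as the main obstacle, and the resolution you propose does not close it.

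The closed set $B_a:=\{p\in V\mid L_p\subseteq\mathscr D(E_a)\}$ is indeed proper when $\mathscr D(E_a)\neq\mathscr D(E)$. However, it contains $V_a$ and is determined by it, while $V_{a+1}$ is forced to contain $V_a$. So ``a general $V_{a+1}$ is not contained in it'' is not an instance of the standard fact that a general section avoids a \emph{fixed} proper closed subset.

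With your chosen order (fix $V_{a+1}$, then vary $H$), you only obtain the tautology that either $V_{a+1}\not\subseteq B_a$ or $\mathscr D(E_a)=\mathscr D(E_{a+1})$; these are just negations of each other. The sentence ``repeating the argument up to $a=d$'' then silently assumes that an equality $\mathscr D(E_a)=\mathscr D(E_{a+1})$ forces $\mathscr D(E_{a+1})=\mathscr D(E_{a+2})$. That propagation is a statement of the same nature as the one you are trying to prove, and nothing in the proposal establishes it.

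Your base case has the same defect. To conclude that $E_1$ is not trivially embedded, the point $p_0$ defining $\{p\mid L_p=L_{p_0}\}$ must itself lie on $V_1$. So that closed set is again not independent of the section.

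The cleanest repair keeps your set $B_a$ but reverses the order of choices. Fix a general $L_a$ and let $L_{a+1}$ range over the affine subspaces of one more dimension containing $L_a$. Every $p\in V\setminus V_a$ lies in exactly one of them, namely the span of $L_a$ and $p$, so these sections sweep out $V$. Since $B_a\subsetneq V$ is closed and $V$ is irreducible, a general member satisfies $V\cap L_{a+1}\not\subseteq B_a$, and $(L_a,L_{a+1})$ is then a general flag. For the base case, take $L_1$ general through a general point $p_0\in V$.

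Repaired this way, your induction proves more than the statement: $\dim\mathscr D(E_a)=\min\{a+r,s+r\}$ for every $1\leq a\leq d$. Hence $\deg_{d-a,a+r-1}(\overline E')\neq 0$ for all $1\leq a\leq s$, which the paper instead imports from Huh (Proposition~\ref{proposition huh}).

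The paper's proof needs no induction. For a general $q\in\mathscr D(E)$, the set $F_q=\{p\in V\mid q\in L_p\}$ has dimension $d+r-\dim\mathscr D(E)=d-s$ by the fiber dimension theorem. So a general affine section of codimension $c\leq d-s$ still meets $F_q$. An incidence and Chevalley argument then places a dense open subset of $\mathscr D(E)$ inside $\pi_2(E_{d-c})$, giving $\mathscr D(E_{d-c})=\mathscr D(E)$ directly.
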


\begin{proof}
For a generic \(q\in\mathscr{D}(E)\), set
\[
    F_q:=\{p\in V\mid q\in L_p\}.
\]
By Theorem~\ref{teorema de la dimensión de la fibra},
as $q$ is generic we have:
\[
    \dim F_q
    =d+r-\dim\mathscr{D}(E)
    =d-\min\{d,m-r\}+\Delta(E).
\]
Fix an integer $0\leq c\leq\dim F_q$. Consider $U \subseteq \mathbb{A}^{nc} \times \mathbb{A}^c$ an open dense subset such that $U$ is a parameter space of affine linear subspaces of codimension $c$ (see Subsection \ref{subsec:deg geom}). Now consider the incidence
\[
  \mathcal{I}=  \left\{
    ((p,q),L)\in E\times U
    \ \middle|\
    p\in L
    \right\}.
\]
Let $\Psi:\mathcal{I} \rightarrow \mathscr{D}(E)\times U $ be the projection map and $(q,L) \in \mathscr{D}(E) \times U$ then: $$\Psi^{-1}(q,L) = (F_q \cap L) \times \{q \} \times \{L\}$$

 Notice for generic $q \in \mathscr{D}(E)$ we have that for generic $L \in U$ the variety \(F_q \cap L\) is nonempty and has dimension \(\dim F_q-c\). Therefore this projection is dominant and hence, by Chevalley's Theorem (see \cite[Chapter I, \S 5.3, Theorem 1.14]{shafarevich2013basic}) we can find a dense open set $G \subseteq \operatorname{Im}(\Psi) \subseteq \mathscr{D}(E) \times U$. If we now project $G$ onto $U$ and call this set $\Omega,$ it is easy to see that as the projection is open $\Omega$ is open and if $L  \in \Omega$ then  the following condition is satisfied:
 \begin{equation}\label{ecuacion G_L}
     G_L:= \{ q \in \mathscr{D}(E) \mid  (q,L) \in G\} \subseteq \pi_2(E \cap (L \times \mathbb{A}^m)) \subseteq \mathscr{D}(E).
 \end{equation}

Notice if we consider the mapping $i_L: \mathscr{D}(E) \rightarrow \mathscr{D}(E) \times U,$ given by: $i_L(q) = (q,L)$ we have that $G_L= i_L^{-1}(G)$ and hence is open and dense in $\mathscr{D}(E)$.

Taking the Zariski closure on \eqref{ecuacion G_L} it follows:
$$\mathscr{D}(E) \subseteq \mathscr{D}(E_{d-c}) \subseteq \mathscr{D}(E) \text{ for every } 0 \leq c \leq \dim(F_q) = d-\min\{d,m-r\}+\Delta(E).$$
This proves the asserted equalities. The assertion about the vanishing of the bidegrees follows directly from
Lemma~\ref{observación positividad}.
\end{proof}

Lemma~\ref{observación positividad} gives an effective numerical
criterion for detecting defectivity in the stable range.

\begin{theorem}[Theorem A from the Introduction]\label{thm defect}
Let $V\subseteq\mathbb A^n$ be an irreducible algebraic variety of dimension $d$, and let $E\subseteq V\times\mathbb A^m$ be an embedded algebraic vector bundle of rank $r>0$ and set $c:=\max\{d-(m-r),0\}.$ Then
\[
\Delta(E)>0
\quad\Longleftrightarrow\quad
\deg_{c,d+r-1-c}(\overline{E}')=0.
\]
\end{theorem}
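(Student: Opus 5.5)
The plan is to reduce to Lemma~\ref{observación positividad} by setting $a:=d-c$, and then to treat the two ranges separately.

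With this choice, $c=\max\{d-(m-r),0\}$ gives $a=\min\{d,m-r\}$. Then $1\leq a\leq d$ provided $m>r$. The degenerate case $m=r$ is handled separately: there $\mathscr D(E)=\mathbb A^m$ and $\Delta(E)=0$, and the bidegree $\deg_{d,r-1}(\overline E')$ should be checked directly to be nonzero, since a general fiber already fills $\mathbb P^{m-1}$.

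Under the identification $d-a=c$ and $a+r-1=d+r-1-c$, Lemma~\ref{observación positividad} states that
\[
\deg_{c,d+r-1-c}(\overline E')>0\iff \dim\mathscr D(E_a)=a+r=\min\{d+r,m\}.
\]
So it suffices to prove $\dim\mathscr D(E_a)=\min\{d+r,m\}$ if and only if $\Delta(E)=0$.

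Case $d+r\leq m$. Here $c=0$ and $a=d$, so $E_a=E$ (a codimension-zero section). The equivalence is then immediate from the definition of $\Delta(E)$.

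Case $d+r>m$. Here $a=m-r<d$. For the forward direction, note $\mathscr D(E_a)\subseteq\mathscr D(E)$. Hence $\dim\mathscr D(E_a)=m$ forces $\dim\mathscr D(E)=m$, which gives $\Delta(E)=0$.

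For the converse, assume $\Delta(E)=0$, so $\Delta(E)<\min\{d,m-r\}=m-r$ because $m>r$. Proposition~\ref{prop:recovery-successive-sections} applies and gives $\mathscr D(E_a)=\mathscr D(E)$ for all $a\geq m-r-0$. In particular, at $a=m-r$ we get $\dim\mathscr D(E_{m-r})=m=a+r$. Equivalently, the last sentence of that proposition states that the bidegree at $a=\min\{d,m-r\}-\Delta(E)$ is nonzero, which is exactly our bidegree.

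The main obstacle is purely bookkeeping. One must verify that the index $a=\min\{d,m-r\}$ lies in the range $1\leq a\leq d$ required by Lemma~\ref{observación positividad}; this is where the edge case $m=r$ arises. One must also confirm that $E_d=E$ is legitimate in Notation~\ref{notacion Ea}, which holds since a general subspace of codimension $0$ is $\mathbb A^n$. Everything substantive is already contained in Lemma~\ref{observación positividad} (through \cite[Theorem~A]{CASTILLO2020107382}) and in Proposition~\ref{prop:recovery-successive-sections}.
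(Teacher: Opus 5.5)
Your proposal follows the paper's proof essentially step for step: the same reduction to Lemma~\ref{observación positividad} at $a=\min\{d,m-r\}$, the same split into $d+r\le m$ and $d+r>m$, the inclusion $\mathscr D(E_{m-r})\subseteq\mathscr D(E)$ for one direction and Proposition~\ref{prop:recovery-successive-sections} for the other, with $m=r$ handled separately. The one slip is your claim that $1\le a\le d$ whenever $m>r$, which fails for $d=0$; the paper treats that case together with $m=r$, since there $c=d$, $\Delta(E)=0$, and $\deg_{d,r-1}(\overline E')=\deg(V)>0$ by Lemma~\ref{lemacancelacion}.
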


\begin{proof}
We split the proof in cases:

If $r=m$, then $E=V\times\mathbb A^m$, so
$\Delta(E)=0$ and
$\deg_{d,m-1}(\overline E')=\deg(V)>0$. The case $d = 0$ is analogous.
We may therefore assume that $r<m$ and $d > 0$. 

If $d+r\leq m$, then $c=0$ and as we said before the assertion follows from
Lemma~\ref{observación positividad}, applied with $a=d$..

If $d+r>m$, then $c=d-(m-r)$. By Lemma~\ref{observación positividad}, applied with $a=m-r$, we have
\[
    \deg_{c,m-1}(\overline E')>0
    \quad\Longleftrightarrow\quad
    \dim(\mathscr{D}(E_{m-r}))=m.
\]

If $\Delta(E)>0$, then
\[
    \dim(\mathscr{D}(E_{m-r}))
    \leq\dim(\mathscr{D}(E))<m,
\]
so this bidegree vanishes.

Conversely, if $\Delta(E)=0$, then
$\dim\mathscr{D}(E)=m$. Since $d>0$ and $r<m$,
Proposition~\ref{prop:recovery-successive-sections}
applies and gives
\[
    \mathscr{D}(E_{m-r})=\mathscr{D}(E).
\]
Hence $\dim\mathscr{D}(E_{m-r})=m$, and the same
bidegree is positive.
\end{proof}

In particular, combining Theorem \ref{thm defect} with Proposition \ref{prop:recovery-successive-sections} we deduce the defect can be characterized in terms of the first non vanishing bidegree yielding a generalization of \cite[Theorem 1.1]{HOLME2001363} for general defects:

\begin{corollary}\label{cor:defect-bidegrees}
Let $V\subseteq\mathbb A^n$ be an irreducible algebraic variety of
dimension $d$, and let
$E\subseteq V\times\mathbb A^m$ be an embedded algebraic vector bundle
of rank $r>0$. Set $c:=\max\{d-(m-r),0\}.$ Then, for every $0 \leq k \leq \min\{d,m-r\}$ the following are equivalent:
\begin{enumerate}
    \item $\Delta(E)=k.$
    \item $\deg_{c+j,d+r-1-c-j}(\overline E')
=0,$ for every $0 \leq j < k $ and $\deg_{c+k,d+r-1-c-k}(\overline E')\neq 0.$
\qed\end{enumerate} 
\end{corollary}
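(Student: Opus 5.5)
The idea is to combine Proposition~\ref{prop:recovery-successive-sections}, Lemma~\ref{observación positividad} and Theorem~\ref{thm defect}, after translating the index $c+j$ into the dimension $a$ of the linear section. Put $s_0:=\min\{d,m-r\}$. Then $c=d-s_0$ in both ranges, since $c=\max\{d-(m-r),0\}$. The bidegree $\deg_{c+j,d+r-1-c-j}(\overline E')$ is then exactly $\deg_{d-a,a+r-1}(\overline E')$ with $a=s_0-j$. By Lemma~\ref{observación positividad}, for $1\le a\le d$ this bidegree is nonzero if and only if $\dim\mathscr D(E_a)=a+r$. So the task reduces to showing that, as $a$ decreases from $s_0$, the first $a$ for which $\dim\mathscr D(E_a)=a+r$ is $a=s_0-\Delta(E)$.

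For (1)$\Rightarrow$(2), let $\Delta(E)=k$. If $k<s_0$, Proposition~\ref{prop:recovery-successive-sections} gives both conclusions. The bidegrees with $a>s_0-k$, that is $j<k$, vanish. The one with $a=s_0-k$, that is $j=k$, is nonzero.

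It remains to handle the boundary case $k=s_0$, which I expect to be the main obstacle. Here $\dim\mathscr D(E)=r$, so $E$ is trivially embedded by Lemma~\ref{lema 7}, and then $\mathscr D(E_a)=L$ for every $a$.

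1. For $j<k$ we have $a=s_0-j\ge1$, so $\dim\mathscr D(E_a)=r<a+r$ and the bidegree vanishes.
2. For $j=k$ we need $\deg_{d,r-1}(\overline E')\neq0$. This lies at $a=0$, outside Lemma~\ref{observación positividad}. I would compute it directly. Cutting with $d$ general hyperplanes in $\mathbb P^n$ leaves the $\deg(V)$ points of $V\cap H$, each carrying the fibre $\mathbb P(L)\cong\mathbb P^{r-1}$. The remaining condition is a general linear space of codimension $r-1$ in $\mathbb P^{m-1}$, which meets each $\mathbb P(L)$ in one point. Hence the bidegree equals $\deg(V)>0$.

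The argument in item 2 at $a=0$ holds for any bundle, so it also covers the case $k<s_0$ should Proposition~\ref{prop:recovery-successive-sections} not already include it.

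For (2)$\Rightarrow$(1), note that the defect $\Delta(E)=k'$ is some integer in $[0,s_0]$, because $r\le\dim\mathscr D(E)$. By the forward direction the bidegree pattern is determined by $k'$: the first nonvanishing index is $j=k'$. Two distinct values $k\ne k'$ give incompatible patterns, since the bidegree with index $\min\{k,k'\}$ is nonzero under one and zero under the other. So pattern (2) for $k$ forces $k'=k$.

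Finally, I would check the degenerate cases $r=m$ and $d=0$ separately, as in Theorem~\ref{thm defect}. In both, $s_0=0$, so only $k=0$ occurs, and then $\deg_{d,m-1}(\overline E')=\deg(V)\neq0$ (resp. $\deg_{0,r-1}(\overline E')=\deg(V)\neq0$ when $d=0$).
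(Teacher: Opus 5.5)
Your proof is correct and follows essentially the paper's route, which deduces the corollary from Proposition~\ref{prop:recovery-successive-sections} (itself resting on the bidegree-positivity lemma) together with Theorem~\ref{thm defect}. Your separate treatment of the boundary case $\Delta(E)=\min\{d,m-r\}$, where that proposition does not apply and the needed bidegree is $\deg_{d,r-1}(\overline E')=\deg(V)$ as in Lemma~\ref{lemacancelacion}, and your uniqueness-of-pattern argument for (2)$\Rightarrow$(1) spell out two steps the paper leaves implicit.
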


\begin{remark}
  It is well known (see \cite{KOHN2021157}) that the bidegrees of the conormal bundle recover the polar degrees of a projective variety $X$. A classical theorem of Holme (see \cite[Theorem~1.1]{HOLME2001363}) characterizes the dual defect of $X$ in terms of the vanishing pattern of its polar degrees. Corollary~\ref{cor:defect-bidegrees} recovers Holme's theorem when $E$ is the conormal bundle and extends Holme's Theorem to other defects.
\end{remark}

\subsection{Grassmann Defectivity}\label{subsec:Gauss-bundle-duality}

\ \medskip

In this subsection, we study Grassmann defectivity and its
relation to directional defectivity
(see Theorem~\ref{thm:Gauss-annihilator-defects}). We start with a definition:

\begin{definition}
    Let \(V\subseteq\mathbb A^n\) be an irreducible algebraic variety of
dimension \(d\), and let
\(E\subseteq\mathbb A^{n+m}\) be an embedded algebraic vector bundle of
rank \(r<m\)  over \(V\).
We define the
\emph{annihilator bundle}
\[
E^\perp
:=
\left\{
(p,\lambda)\in V\times\mathbb A^m
\ \middle|\
\lambda(q)=0
\text{ for every }q\in L_p
\right\},
\]
where we think of $\lambda$ as an element of the dual space of $\mathbb{A}^m$. 
\end{definition}
By Observation~\ref{obs:localmente-trivial},
$E^\perp\subseteq\mathbb A^{n+m}$ is an embedded algebraic
vector bundle of rank $m-r$.
Moreover, taking annihilators twice recovers the original
bundle, yielding $(E^\perp)^\perp=E$.

\begin{example}
Let $X\subsetneq\mathbb P^n$ be a smooth projective variety
and set $V:=X\cap\{x_0\neq 0\}$.
The tangent cone bundle and the conormal bundle are annihilators of one another:
\[
    (\mathbb{T}V_{\mathrm{cone}})^\perp=N^*(V),
    \qquad
    N^*(V)^\perp=\mathbb{T}V_{\mathrm{cone}}.
\]
\end{example}

\medskip

Since $K$ has characteristic zero, the differential of the
classifying map $\Gamma_E:V\to\mathbb G(m,r)$ has rank
$\dim(\overline{\Gamma_E(V)})$ at a general point
$p\in V_{\mathrm{reg}}$. Thus Grassmann defectivity can
be computed as the dimension of the kernel of
\[
    d_p\Gamma_E:
    T_pV\longrightarrow T_{\Gamma_E(p)}\mathbb G(m,r).
\]

We are now ready to prove the main Theorem of this Subsection.

\begin{theorem}\label{thm:Gauss-annihilator-defects}
Let \(V\subseteq\mathbb A^n\) be a smooth irreducible algebraic variety of
dimension \(d\), and let
\(E\subseteq\mathbb A^{n+m}\) be an embedded algebraic vector bundle of
rank \(r<m\) over \(V\). Then
\[
\Delta_G(E)
=
\Delta_G(E^\perp)
\leq
\min\left\{
\Delta(E)+\max\{d+r-m,0\},
\Delta(E^\perp)+\max\{d-r,0\}
\right\}.
\]
In particular, \(E\) is Grassmann defective if and only if \(E^\perp\) is
Grassmann defective.
\end{theorem}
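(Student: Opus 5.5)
The plan has three parts: the equality of the two Grassmann defects, a bound for $\Delta_G(E)$ in terms of $\Delta(E)$, and the same bound applied to $E^\perp$.

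\emph{Equality.} The annihilator map $\iota\colon\mathbb G(m,r)\to\mathbb G(m,m-r)$, $L\mapsto L^\perp$, is an isomorphism of varieties, and by construction $\Gamma_{E^\perp}=\iota\circ\Gamma_E$. Hence $\overline{\Gamma_{E^\perp}(V)}=\iota\bigl(\overline{\Gamma_E(V)}\bigr)$ has the same dimension as $\overline{\Gamma_E(V)}$, which gives $\Delta_G(E)=\Delta_G(E^\perp)$. The final ``in particular'' follows immediately.

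\emph{Bound $\Delta_G(E)\leq\Delta(E)+\max\{d+r-m,0\}$.} Factor $\pi_2\colon E\to\mathbb A^m$ through the tautological bundle over the image of the classifying map. Set $G:=\overline{\Gamma_E(V)}$, of dimension $d-\Delta_G(E)$, and let
\[
\mathcal T_G:=\{(L,v)\in G\times\mathbb A^m\mid v\in L\}.
\]
This is irreducible of dimension $d-\Delta_G(E)+r$, and its image in $\mathbb A^m$ has closure $\mathscr D(E)$, because $\pi_2(E)$ is the union of the $L_p$. The fiber of $\mathcal T_G\to\mathscr D(E)$ over a general $q$ is $\{L\in G\mid q\in L\}$. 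This alone only yields $\dim\mathscr D(E)\leq d-\Delta_G(E)+r$, which points the wrong way. Instead I would compare the fibers $F_q=\{p\in V\mid q\in L_p\}$ appearing in the proof of Proposition~\ref{prop:recovery-successive-sections}. The map $\Gamma_E$ restricts to $F_q\to\{L\in G\mid q\in L\}$, and its fibers are the fibers of $\Gamma_E$, of dimension $\Delta_G(E)$ generically. Since these fibers lie entirely in $F_q$ (if $\Gamma_E(p)=\Gamma_E(p')$ then $q\in L_{p'}$), we get
\[
\dim F_q\geq \Delta_G(E).
\]
The fiber dimension theorem (Theorem~\ref{teorema de la dimensión de la fibra}) gives $\dim F_q=d+r-\dim\mathscr D(E)=d+r-\min\{d+r,m\}+\Delta(E)$. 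Since $d+r-\min\{d+r,m\}=\max\{d+r-m,0\}$, this yields $\Delta_G(E)\leq\Delta(E)+\max\{d+r-m,0\}$. The main care point is making ``general fibre of $\Gamma_E$ has dimension $\Delta_G(E)$'' compatible with ``$q$ general in $\mathscr D(E)$''. I would handle it by restricting to a dense open $V^0\subseteq V$ over which every fiber of $\Gamma_E$ has dimension exactly $\Delta_G(E)$ (again by Theorem~\ref{teorema de la dimensión de la fibra}), and noting that $\pi_2(\pi_1^{-1}(V^0))$ is still dense in $\mathscr D(E)$. Then a general $q$ lies in some $L_p$ with $p\in V^0$, and the whole fiber $\Gamma_E^{-1}(\Gamma_E(p))\cap V^0$ sits inside $F_q$.

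\emph{Bound for $E^\perp$.} Apply the same inequality to $E^\perp$, which has rank $m-r<m$ (assuming $r>0$), so that $\max\{d+(m-r)-m,0\}=\max\{d-r,0\}$. This gives $\Delta_G(E^\perp)\leq\Delta(E^\perp)+\max\{d-r,0\}$, and combining with the equality yields the minimum. Smoothness of $V$ is not really needed in this argument; it serves only to fit the differential interpretation stated before the theorem. The case $r=0$, if allowed, is trivial since then $\Gamma_E$ is constant.
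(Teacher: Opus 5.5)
Your proposal is correct and follows essentially the same route as the paper. The equality comes from $\Gamma_{E^\perp}=\iota\circ\Gamma_E$ with $\iota$ the annihilator isomorphism, and the bound from embedding a general fiber of $\Gamma_E$ into a fiber of $\pi_2$ (the paper's $F_p\times\{e\}\subseteq\pi_2^{-1}(e)$ is your $\Gamma_E^{-1}(\Gamma_E(p))\subseteq F_q$), followed by the fiber dimension theorem and the same argument for $E^\perp$. Your explicit choice of $V^0$ handles the simultaneous genericity slightly more carefully than the paper's ``general point $(p,e)\in E$'', the tautological-bundle detour can simply be dropped, and you are right that smoothness of $V$ is not needed for the stated inequality.
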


\begin{proof}
Taking orthogonal complements with respect to the standard pairing
defines an isomorphism
\[
\iota:
\mathbb G(m,r)
\xrightarrow{\ \sim\ }
\mathbb G(m,m-r),
\qquad
L\longmapsto L^\perp.
\]
The Grassmann direction maps of \(E\) and \(E^\perp\) are related by
\[
\Gamma_{E^\perp}
=
\iota\circ\Gamma_E.
\]
Since \(\iota\) is an isomorphism, the two maps have images of the same
dimension and, at every smooth point \(p\in V\), $\ker(d_p\Gamma_{E}) =\ker(d_p\Gamma_{E^\perp}).$
In particular,
\begin{equation}\label{eq:Gauss-annihilator}
\Delta_G(E^\perp)=\Delta_G(E).
\end{equation}

Now choose a general point \((p,e)\in E\), and set $F_p:=\Gamma_E^{-1}\bigl(\Gamma_E(p)\bigr)$.
For every \(q\in F_p\), one has \(L_q=L_p\). Since \(e\in L_p\), it
follows that \(e\in E_q\), and therefore
\[
F_p\times\{e\}\subseteq\pi_2^{-1}(e).
\]
By the fiber dimension theorem and the definition of \(\Delta(E)\), we obtain
\begin{align*} \Delta_G(E)
=
\dim F_p
\leq
\dim\pi_2^{-1}(e)
=
d+r-\dim\mathscr{D}(E)
&=
\Delta(E)
+d+r-\min\{d+r,m\}
\\ &=
\Delta(E)+\max\{d+r-m,0\}.
\end{align*}

Applying the same argument to \(E^\perp\), whose rank is \(m-r\), gives
\[
\Delta_G(E^\perp)
\leq
\Delta(E^\perp)+\max\{d-r,0\}.
\]

Combining these inequalities with
\eqref{eq:Gauss-annihilator} proves the first assertion.
\end{proof}

Zak's theorem on tangencies yields a bound for the Grassmann
defect of any embedded bundle containing the tangent cone
bundle, provided that the projective closure of the base
is smooth and non degenerate.

\begin{proposition}\label{cor:zak-grassmann}
Let $V\subseteq\mathbb A^n$ be an irreducible variety
of dimension $d$ whose projective closure
$X\subseteq\mathbb P^n$ is smooth and non degenerate.
Let $E\subseteq V\times\mathbb A^{n+1}$ be an embedded
algebraic vector bundle of rank $r$, with
$d+1\leq r\leq n$, such that $\mathbb{T}V_{\mathrm{cone}}\subseteq E.$
Then:  $$\Delta_G(E)=\Delta_G(E^\perp)\leq r-d-1.$$
\end{proposition}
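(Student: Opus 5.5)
The equality $\Delta_G(E)=\Delta_G(E^\perp)$ is already given by Theorem~\ref{thm:Gauss-annihilator-defects}, so only the bound $\Delta_G(E)\leq r-d-1$ needs proof. The plan is to work fibrewise with the Grassmann classifying map and apply Zak's theorem on tangencies. Fix a general point $p\in V$ and let $F_p:=\Gamma_E^{-1}(\Gamma_E(p))$ be the fibre through it. By the fibre dimension theorem, $\Delta_G(E)=\dim F_p$ for general $p$. Let $\Lambda:=\mathbb P(L_p)\subseteq\mathbb P^n$; this is a projective linear subspace of dimension $r-1\leq n-1$, hence proper.

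First I show that $\Lambda$ is tangent to $X$ along the whole closure $\overline{F_p}$. For every $q\in F_p$ we have $L_q=L_p$, and since $\mathbb{T}V_{\mathrm{cone}}\subseteq E$, the fibre of $\mathbb{T}V_{\mathrm{cone}}$ over $q$ lies in $L_q$. That fibre is the cone over $\mathbb T_qX$, so $\mathbb T_qX\subseteq\Lambda$ for every $q\in F_p$. Now $X$ is smooth, so the Gauss map $g$ is a morphism on all of $X$. The condition $\mathbb T_xX\subseteq\Lambda$ is closed in $x$, so it extends to every point of $Y:=\overline{F_p}\subseteq X$, where the closure is taken in $\mathbb P^n$. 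Thus $\Lambda$ is tangent to $X$ along the closed subvariety $Y$, and $\dim Y=\dim F_p$.

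Next I invoke Zak's theorem on tangencies: if $X\subseteq\mathbb P^n$ is smooth and non degenerate of dimension $d$, and a linear subspace $\Lambda\subsetneq\mathbb P^n$ is tangent to $X$ along a closed subvariety $Y$, then $\dim Y\leq\dim\Lambda-d$. This is \cite[Chapter~I, Theorem~1.7]{Zak1993}, which is not among the results recalled in the paper, so I would cite it directly. Applying it with $\dim\Lambda=r-1$ gives
\[
\Delta_G(E)=\dim F_p=\dim Y\leq (r-1)-d=r-d-1.
\]

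The main obstacle is checking that Zak's hypotheses hold exactly as stated. Properness of $\Lambda$ needs $r\leq n$, which is assumed. Non degeneracy of $X$ is assumed. Tangency along a \emph{closed projective} subvariety requires passing from $F_p\subseteq V$ to its closure in $X$, which is why smoothness of all of $X$, and not only of $V$, is needed. One should also note that the hypothesis $r\geq d+1$ makes the bound nonnegative, and that the closure step uses the fact that $g$ is a morphism on the whole of $X$.
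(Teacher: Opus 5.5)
Your proposal is correct and follows the paper's proof essentially step by step. You take the closure of a general fibre of $\Gamma_E$, use $\mathbb{T}V_{\mathrm{cone}}\subseteq E$ together with regularity of the Gauss map on the smooth $X$ to get tangency of $\mathbb P(L)$ along it, apply Zak's theorem on tangencies (the paper cites Chapter~I, Corollary~1.8 of Zak rather than Theorem~1.7), and take the equality from Theorem~\ref{thm:Gauss-annihilator-defects}.
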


\begin{proof}
Let $L\in\Gamma_E(V)$ be general and set
$Y:=\overline{\Gamma_E^{-1}(L)}\subseteq X$.
The inclusion $\mathbb{T}V_{\mathrm{cone}}\subseteq E$
implies $\mathbb T_pX\subseteq\mathbb P(L)$ along
$\Gamma_E^{-1}(L)$, and hence along $Y$, since the
Gauss map of $X$ is regular.
As $\dim\mathbb P(L)=r-1<n$, Zak's theorem on tangencies
(see \cite[Chapter~I, Corollary~1.8]{Zak1993}) gives
\[
    \Delta_G(E)=\dim Y
    \leq\dim\mathbb P(L)-\dim X=r-d-1.
\]
Finally, $\Delta_G(E)=\Delta_G(E^\perp)$ by Theorem \ref{thm:Gauss-annihilator-defects}
\end{proof}

For $E=\mathbb{T}V_{\mathrm{cone}}$, Proposition \ref{cor:zak-grassmann} is the usual application of Zak's theorem on tangencies that establishes
the finiteness of the Gauss map (see \cite[Chapter~I, Corollary~2.8]{Zak1993}).

\section{The Geometric Degree of an Embedded Algebraic Vector Bundle}\label{asdasdasd}

In this section, we derive a general formula for the geometric degree of an embedded algebraic vector bundle $E$ by means of van der Waerden's theorem on bidegrees (see Theorem \ref{teorema formula}). The key point is that each bidegree appearing in this formula admits a natural geometric interpretation in terms of the direction varieties of suitable linear restrictions of \(E\). This allows us to express the geometric degree of \(E\) as a sum of contributions determined by the geometry of these associated direction varieties. Finally in Subsection \ref{subsec trivial} we characterize algebraic vector bundles with minimal geometric degree.

\medskip

We start with an immediate consequence of
Theorem~\ref{thm:vdw}.

\begin{observation}\label{vdw}
Let $V\subseteq\mathbb A^n$ be an irreducible algebraic variety and let
$E\subseteq\mathbb A^{n+m}$ be an embedded algebraic vector bundle of positive rank
over $V$, then:
\begin{equation}\label{eq:vdw}
    \deg(E)
    =
    \sum_{\alpha+\beta=\dim V+\operatorname{rk}(E)-1}
    \deg_{\alpha,\beta}(\overline E').
\end{equation}
\end{observation}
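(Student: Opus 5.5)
The identity will follow from Theorem~\ref{thm:vdw} applied to $X:=\overline E\subseteq\mathbb P^{n+m}$. To invoke it, I will check its three hypotheses: $\overline E$ is irreducible, its ideal is bihomogeneous, and it is not contained in either coordinate subspace. Once these hold, the summation index $\alpha+\beta=\dim\overline E-1$ is identified with $\dim V+\operatorname{rk}(E)-1$ through Proposition~\ref{proposición completa}. The degree is then transferred from $\overline E$ to $E$ by the convention $\deg(\overline E)=\deg(E)$ of Subsection~\ref{subsec:deg geom}.

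\emph{Irreducibility and dimension.} By Proposition~\ref{proposición completa}, $E$ is irreducible of dimension $\dim V+r$, where $r=\operatorname{rk}(E)$. Hence its projective closure $\overline E$ is irreducible of the same dimension. It follows that $\dim\overline E-1=\dim V+\operatorname{rk}(E)-1$, which is exactly the index range appearing in \eqref{eq:vdw}.

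\emph{Bihomogeneity.} Each fiber $\{p\}\times L_p$ is a linear subspace, so $(p,\lambda v)\in E$ whenever $(p,v)\in E$ and $\lambda\in K$. This is the scaling hypothesis of Lemma~\ref{asd}, which therefore shows that $I(\overline E)\subseteq K[\overline{\mathbf x},\mathbf y]$ is bihomogeneous.

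\emph{Non-containment in the coordinate subspaces.} The two coordinate subspaces are $\{\overline{\mathbf x}=0\}$ and $\{\mathbf y=0\}$; I treat them separately.
\begin{enumerate}
\item Every point of $E$ has $x_0=1$, so $\overline E$ meets $\{x_0\neq0\}$ in a nonempty set. Hence $\overline E$ is not contained in $\{\overline{\mathbf x}=0\}$.
\item Since $r>0$, each $L_p$ contains a vector $v\neq0$. The point $[1:p:v]$ then lies in $\overline E$ but not in $\{\mathbf y=0\}$, so $\overline E$ is not contained in that subspace either.
\end{enumerate}
This is the only place where positivity of the rank is used. Without it, $E$ would reduce to $V\times\{0\}$, which lies inside $\{\mathbf y=0\}$ and has empty biprojective interpretation.

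With all hypotheses verified, Theorem~\ref{thm:vdw} gives $\deg(\overline E)=\sum_{\alpha+\beta=\dim V+r-1}\deg_{\alpha,\beta}(\overline E')$, and $\deg(E)=\deg(\overline E)$ completes the argument. There is no real obstacle here. The only point requiring care is to confirm that the affine geometric degree of $E$ agrees with the degree of $\overline E$ used in van der Waerden's formula, and this holds by definition since $E$ is the dense affine chart $\overline E\cap\{x_0\neq0\}$.
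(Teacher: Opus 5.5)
Your proposal is correct and is the argument the paper leaves implicit when it calls this an immediate consequence of Theorem~\ref{thm:vdw}: irreducibility and dimension come from Proposition~\ref{proposición completa}, bihomogeneity from Lemma~\ref{asd}, and then van der Waerden's formula; your explicit check that $\overline E$ avoids both coordinate subspaces, which is where $r>0$ is used, supplies the one hypothesis the paper does not spell out. One small imprecision: when $E$ is only locally closed (e.g.\ over a quasi-affine base), $\overline E\cap\{x_0\neq0\}$ is the closure of $E$ in $\mathbb A^{n+m}$ rather than $E$ itself, but this changes nothing because the degree of such a variety is defined through its closure.
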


Our first goal is to determine the bidegrees appearing in \eqref{eq:vdw}. We begin with the extremal cases, which can be computed directly from the geometry of the projection onto the base.

\begin{lemma}\label{lemacancelacion}
Let $V\subseteq\mathbb A^n$ be an irreducible algebraic variety of dimension
$d$, and let $E\subseteq\mathbb A^{n+m}$ be an embedded algebraic vector bundle of
positive rank $r$ over $V$. Let $\alpha,\beta\in\mathbb N_0$ satisfy
$\alpha+\beta=d+r-1$. Then:
\begin{enumerate}
    \item If $\alpha\geq d+1$, then
    $\deg_{\alpha,\beta}(\overline E')=0$.

    \item One has
    $\deg_{d,r-1}(\overline E')=\deg(V)$.
\end{enumerate}
\end{lemma}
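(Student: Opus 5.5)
Both parts come from reading the definition of the bidegree directly and using the geometry of the first projection $\pi_1\colon\overline E'\to\mathbb P^n$. The key fact is that the closure of $\pi_1(\overline E')$ is $X:=\overline V\subseteq\mathbb P^n$, a variety of dimension $d$. Indeed, every point of $E$ with nonzero $\mathbf y$-part maps to a point of $V$, and $\overline E'$ is the closure of the image of $E\setminus(V\times\{0\})$. Recall that $\deg_{\alpha,\beta}(\overline E')=\#\bigl(\overline E'\cap(L_1\times L_2)\bigr)$, where $L_1\subseteq\mathbb P^n$ is a general linear subspace of codimension $\alpha$ and $L_2\subseteq\mathbb P^{m-1}$ is a general linear subspace of codimension $\beta$.

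For part (1), suppose $\alpha\geq d+1$. A general linear subspace $L_1$ of codimension $\alpha>\dim X$ does not meet $X$, and therefore does not meet $\pi_1(\overline E')\subseteq X$. Hence $\overline E'\cap(L_1\times\mathbb P^{m-1})=\emptyset$ and the bidegree is $0$. The convention $\deg_{\alpha,\beta}=0$ for $\alpha>n$ only reinforces this.

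For part (2), take $\alpha=d$ and $\beta=r-1$. A general $L_1$ of codimension $d$ meets $X$ in exactly $\deg(V)$ points. By genericity (Definition~\ref{def: grado geom}, taking $L_1$ avoiding the hyperplane at infinity's contribution), these points are $[1:p_1],\dots,[1:p_D]$ with $p_i\in V$ and $D=\deg(V)$. Next I show that the fiber of $\overline E'$ over each $[1:p_i]$ is exactly $\mathbb P(L_{p_i})\cong\mathbb P^{r-1}$. Local triviality (Definition~\ref{definicion 1}) gives $\pi_1^{-1}(U)\simeq U\times\mathbb A^r$, so $E$ is closed in $U\times\mathbb A^m$. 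Consequently the closure $\overline E$ adds nothing over affine points of $V$ beyond possibly points at infinity in the $\mathbf y$-directions. In biprojective terms, the fiber of $\overline E'$ over $[1:p]$ is the projectivization of the closure of $L_p$, which is just $\mathbb P(L_p)$. I would verify this carefully by checking that the bihomogeneous equations $A(\mathbf x)\mathbf y^t=0$ from Observation~\ref{obs:localmente-trivial}, homogenized in $\mathbf x$, lie in $I(\overline E)$ and cut out exactly $L_p$ over the chart $x_0\neq0$. The fiber then sits inside $\mathbb P(L_p)$, and the reverse inclusion is immediate.

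With the fibers identified, $\overline E'\cap(L_1\times L_2)=\bigsqcup_i \{[1:p_i]\}\times\bigl(\mathbb P(L_{p_i})\cap L_2\bigr)$. Choosing $L_2$ of codimension $r-1$ general with respect to the finitely many $(r-1)$-planes $\mathbb P(L_{p_i})$, each intersection is a single point. This gives the count $\deg(V)$. The main obstacle is justifying the genericity needed for the single-number definition of the bidegree. Concretely, one must choose the pair $(L_1,L_2)$ in a single open set where the bidegree count holds, where $L_1$ avoids the boundary $\overline E'\setminus\pi_1^{-1}(\{x_0\neq0\})$ (which has projection of dimension $<d$), and where $L_1\cap X$ consists of $\deg(V)$ points of $V$. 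Each condition is open and nonempty, so their intersection is nonempty, and transversality (reducedness) follows because the count equals the generic cardinality.
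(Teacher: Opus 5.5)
Your proof is correct and follows the same route as the paper. Part (1) uses $\pi_1(\overline E')\subseteq\overline V$ together with the fact that a general subspace of codimension $\alpha>d$ misses $\overline V$. Part (2) cuts $\overline V$ with a general codimension-$d$ subspace meeting it in $\deg(V)$ affine points, identifies the fiber of $\overline E'$ over each of these points with $\mathbb P(L_{p_i})$, and cuts each fiber with a general codimension-$(r-1)$ subspace of $\mathbb P^{m-1}$. Your extra justifications only make explicit details the paper leaves implicit: closedness of $E$ in $V\times\mathbb A^m$ pins down the fibers over affine points, and you check the joint genericity of $(L_1,L_2)$.
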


\begin{proof}
Since $\pi_1(\overline E')\subseteq\overline V,$
a nonempty intersection computing $\deg_{\alpha,\beta}(\overline E')$ induces a
nonempty intersection of $\overline V$ with $\alpha$ general hyperplanes in
$\mathbb P^n$. This is impossible when $\alpha\geq d+1$, and proves the first
assertion.

For the second assertion, let $L\subseteq\mathbb P^n$ be a general linear
subspace of codimension $d$ such that
\[
    \overline V\cap L
\]
is contained in the affine chart $x_0\neq0$ and consists of $\deg(V)$
points. Over each $p\in\overline V\cap L$, the fiber of $\overline E'$ is
the projective linear space $\mathbb P(L_p)\simeq\mathbb P^{r-1}$. A
general linear subspace $H\subseteq\mathbb P^{m-1}$ of codimension $r-1$
meets each of these finitely many fibers in exactly one point. Consequently,
\[
   \deg_{d,r-1}(\overline E')= \#\bigl(\overline E'\cap(L\times H)\bigr)=\deg(V),
\]
and the result follows.
\end{proof}

We now interpret the remaining bidegrees in terms of the
direction varieties of general linear restrictions of $E$.
In our setting, this generalizes
\cite[Lemma 4.3]{CaminataCidRuizConca2023},
with both factors independent of the chosen general section
of a fixed dimension.
We give an independent proof, although the result also follows
by combining their lemma with Lemma \ref{lema tecnico}.

\begin{proposition}\label{producto}
Let $V\subseteq\mathbb A^n$ be an irreducible algebraic variety of dimension
$d$, and let $E\subseteq\mathbb A^{n+m}$ be an embedded algebraic vector bundle of
positive rank $r$ over $V$. For every $1\leq a\leq d$, there exists an
integer $\xi_{d-a+1}(E)\in\mathbb N_0$ such that
\[
    \deg_{d-a,a+r-1}(\overline E')
    =
    \xi_{d-a+1}(E)\deg\bigl(\mathscr{D}(E_a)\bigr).
\]
\end{proposition}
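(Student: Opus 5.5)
Two cases, according to whether the bidegree vanishes. If $\deg_{d-a,a+r-1}(\overline E')=0$, we set $\xi_{d-a+1}(E):=0$ and the identity is trivial. So we assume $\deg_{d-a,a+r-1}(\overline E')>0$. By Lemma~\ref{observación positividad}, $\dim\mathscr{D}(E_a)=\dim E_a=a+r$, so the projection $\pi_2\colon E_a\to\mathscr{D}(E_a)$ is dominant between irreducible varieties of the same dimension. Here $E_a$ is irreducible by Proposition~\ref{proposición completa} applied over $V_a$, which Bertini makes irreducible of dimension $a$. We then define $\xi_{d-a+1}(E)$ as the degree $[K(E_a):K(\mathscr{D}(E_a))]_{\pi_2}$ of this generically finite map. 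To see that it is well defined, i.e.\ independent of the general section $L$, we use Corollary~\ref{observación del lema tecnico}, which already gives independence of $\deg\mathscr{D}(E_a)$. For $\xi$ itself, the quotient $\deg_{d-a,a+r-1}(\overline E')/\deg\mathscr{D}(E_a)$ will be constant once the product formula is proved, so independence follows a posteriori.

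The core is the computation
\[
\deg_{d-a,a+r-1}(\overline E')=\deg_{0,a+r-1}(\overline E_a')=\#\bigl(\overline E_a'\cap(\mathbb P^n\times\mathbb P(H))\bigr),
\]
where $\mathbb P(H)\subseteq\mathbb P^{m-1}$ is a general linear subspace of codimension $a+r-1$; the first equality is the one already used in the proof of Lemma~\ref{observación positividad}. Consider the projection $\pi_2\colon\overline E_a'\to\pi_2(\overline E_a')$, which by~\eqref{ecuacion cono} has image the projectivization of $\mathscr{D}(E_a)$, of dimension $a+r-1$. Since $\mathbb P(H)$ is general, $\pi_2(\overline E_a')\cap\mathbb P(H)$ consists of exactly $\deg\mathscr{D}(E_a)$ points, and these are general points of $\pi_2(\overline E_a')$. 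Over each such point $[v]$ the fiber of $\pi_2$ has cardinality equal to the generic fiber cardinality, which Corollary~\ref{coro:fibra} identifies with the field degree. Multiplying gives the product formula.

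The main obstacle is making sure that the points counted lie in the affine part and are counted once. Two things must be checked. First, $\overline E_a'$ may have components or points at infinity, i.e.\ with $x_0=0$, since it is only equidimensional. Those components have dimension at most $a+r-2$ in the $\pi_2$-image or fail to dominate, so a general $\mathbb P(H)$ avoids their images. Second, the fiber of $\pi_2\colon\overline E_a'\to\mathbb P^{m-1}$ over a general $[v]$ must be identified with $\{p\in V_a: v\in L_p\}$, which is the fiber of $E_a\to\mathscr{D}(E_a)$ over $v$, since fibers are cones and $\lambda v\in L_p$ for all $\lambda$. I would handle both by shrinking to a dense open set $G\subseteq\mathscr{D}(E_a)$ where Corollary~\ref{coro:fibra} applies and whose complement, together with the image of the boundary $\overline E_a'\setminus(\text{affine part})$, is a proper closed conical set. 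Then a general $H$ meets the projectivization of $\mathscr{D}(E_a)$ only inside $\mathbb P(G)$, and the count becomes $\xi_{d-a+1}(E)\deg\mathscr{D}(E_a)$.
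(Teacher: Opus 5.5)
Your proposal is correct and follows essentially the same route as the paper. You split according to whether $\dim\mathscr{D}(E_a)=a+r$ (equivalently, by the positivity lemma, whether the bidegree is nonzero), and you take $\xi_{d-a+1}(E)$ to be the generic fiber cardinality of $\pi_2\colon E_a\to\mathscr{D}(E_a)$ from Corollary~\ref{coro:fibra}. You then count the $\deg\mathscr{D}(E_a)$ points of a general linear slice of the direction variety times that fiber size, after discarding points at infinity and special fibers, and get independence of the section from the generic constancy of $\deg\mathscr{D}(E_a)$ (Lemma~\ref{lema tecnico}).

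The only difference is cosmetic. The paper slices by a general affine subspace $H_{G,s}\subseteq\mathbb A^m$ and transfers the count to $\widehat H_{G,s}\subseteq\mathbb P^{m-1}$ by an explicit bijection, whereas you slice directly in $\mathbb P^{m-1}$. Your version needs the good open set to be $K^*$-stable, which you assert without justification; it holds after replacing $U$ by $K^*\cdot U$, since $\pi_2^{-1}(\lambda q)$ and $\pi_2^{-1}(q)$ have the same cardinality for $\lambda\neq0$.
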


\begin{proof}
If $\dim\mathscr{D}(E_a)<a+r$, then
$\deg_{d-a,a+r-1}(\overline E')=0$ by
Corollary~\ref{observación positividad}. In this case we set
$\xi_{d-a+1}(E)=0$.

Assume now that $\dim\mathscr{D}(E_a)=a+r$. The projection
\[
    \pi_2\colon E_a\longrightarrow\mathscr{D}(E_a)
\]
is dominant between irreducible varieties of the same dimension. It is
therefore generically finite and becomes quasi-finite after restricting to
a suitable nonempty open subset of the target. By
Corollary~\ref{coro:fibra}, there exist a nonempty open subset
$U\subseteq\mathscr{D}(E_a)$ and an integer
$\xi_{d-a+1}(E)>0$ such that
\begin{equation}\label{eq:xi-general-fiber}
    \#\pi_2^{-1}(q)=\xi_{d-a+1}(E)
    \qquad\text{for every }q\in U.
\end{equation}
Notice that
\[
    \pi_2^{-1}(q)
    =
    \bigl\{(p,q)\in E_a\mid p\in V_a\bigr\}
    =
    \bigl\{(p,q)\in E_a\mid p\in V_a,\ q\in L_p\bigr\}.
\]
Thus~\eqref{eq:xi-general-fiber} simultaneously counts all the fibers
$L_p$ of $E_a\to V_a$ that contain the general direction $q$.

Set $k:=a+r$. Choose a general matrix
$G\in\mathbb A^{k\times m}$ and a general point $s\in\mathbb A^k$, and
write
\[
    H_{G,s}:=\{q\in\mathbb A^m\mid Gq=s\}.
\]
By Definition~\ref{def: grado geom}, the intersection
\[
    \mathscr{D}(E_a)\cap H_{G,s}
\]
consists of $\deg(\mathscr{D}(E_a))$ points. By choosing $(G,s)$ in a sufficiently
small nonempty open subset, we may assume that all these points are nonzero
and belong to $U$. It follows that
\begin{equation}\label{eq:affine-fiber-count}
\#\bigl(E_a\cap(\mathbb A^n\times H_{G,s})\bigr)
=
\sum_{q\in\mathscr{D}(E_a)\cap H_{G,s}}\#\pi_2^{-1}(q)\notag
=\xi_{d-a+1}(E)\deg\bigl(\mathscr{D}(E_a)\bigr).
\end{equation}

Consider the projective linear subspace
\[
    \widehat H_{G,s}
    :=
    \bigl\{[q]\in\mathbb P^{m-1}\ \bigm|\ Gq
    \text{ is proportional to }s\bigr\}.
\]
Since $G$ has rank $k$, the subspace $\widehat H_{G,s}$ has codimension
$k-1=a+r-1$. If $k=m$, then $\ker G=0$. If $k<m$, we may moreover assume
that
\[
    \pi_2(\overline E_a')\cap\mathbb P(\ker G)=\varnothing,
\]
because
\[
    \dim\pi_2(\overline E_a')+\dim\mathbb P(\ker G)
    =(k-1)+(m-k-1)=m-2<m-1.
\]
Hence the map $q\mapsto[q]$ induces a bijection
\[
    \mathscr{D}(E_a)\cap H_{G,s}
    \longrightarrow
    \pi_2(\overline E_a')\cap\widehat H_{G,s}.
\]
After shrinking the open set of choices once more, it can be assumed the corresponding points
of $\overline E_a'$ lie in the chart $\{x_0\neq0\}$. Therefore the same
rescaling gives a bijection between
\[
    E_a\cap(\mathbb A^n\times H_{G,s})
    \quad\text{and}\quad
    \overline E_a'\cap
    (\mathbb P^n\times\widehat H_{G,s}).
\]
Combining the preceding bijections with the affine intersection count, we
obtain,
\begin{align*}
\deg_{d-a,a+r-1}(\overline E')
=
\deg_{0,a+r-1}(\overline E_a')=
\xi_{d-a+1}(E)\deg\bigl(\mathscr{D}(E_a)\bigr),
\end{align*}
as desired. Finally, the bidegree on the left and the degree of
$\mathscr{D}(E_a)$ are independent of the general linear section defining $E_a$
by Corollary~\ref{observación del lema tecnico}. Hence
$\xi_{d-a+1}(E)$ depends only on $E$ and $a$.
\end{proof}

\begin{remark}\label{rmk: w(E)}
When $\dim\mathscr{D}(E_a)=a+r$, the preceding proof and
Corollary~\ref{coro:fibra} give
\[
    \xi_{d-a+1}(E)
    =
    \left[K(E_a):K\bigl(\mathscr{D}(E_a)\bigr)\right].
\]
Equivalently, for a general $q\in\mathscr{D}(E_a)$,
\[
    \xi_{d-a+1}(E)
    =
    \#\bigl\{p\in V_a\mid q\in L_p\bigr\}.
\]
 When $\dim\mathscr{D}(E_a)<a+r$, we have set
$\xi_{d-a+1}(E)=0$.
\end{remark}

Combining the preceding results with Proposition \ref{prop:recovery-successive-sections} yields the following Theorem

\begin{theorem}[Theorem B from the introduction]\label{teorema formula}
Let $V\subseteq\mathbb A^n$ be an irreducible algebraic variety of dimension
$d$, and let $E\subseteq\mathbb A^{n+m}$ be an embedded algebraic vector bundle of
positive rank $r$ over $V$. Set $s:=\min\{d,m-r\}-\Delta(E),$ then:
\[
\deg(E)
=
\deg(V)
+
\xi_{d-s+1}(E)\deg\bigl(\mathscr D(E)\bigr)
+
\sum_{a=1}^{s-1}
\xi_{d-a+1}(E)\deg\bigl(\mathscr D(E_a)\bigr),
\]
where we set $\xi_{d+1}(E)=0$ in case $s=0$.
\end{theorem}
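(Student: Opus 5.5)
Start from Observation \ref{vdw}, which writes $\deg(E)$ as the sum of the bidegrees $\deg_{\alpha,\beta}(\overline E')$ over $\alpha+\beta=d+r-1$. Reparametrize these terms by $\alpha=d-a$, so that $\beta=a+r-1$. The terms then split into three groups, which I handle in turn.

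By Lemma \ref{lemacancelacion}(1), every term with $\alpha\geq d+1$ (that is, $a\leq -1$) vanishes. By Lemma \ref{lemacancelacion}(2), the term $a=0$ equals $\deg(V)$. Terms with $\beta>m-1$, i.e.\ $a>m-r$, vanish by the convention in Definition \ref{def:bidegree}. So the surviving contributions beyond $\deg(V)$ are those with $1\leq a\leq \min\{d,m-r\}$. By Proposition \ref{producto}, each of these equals $\xi_{d-a+1}(E)\deg(\mathscr D(E_a))$.

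Now I use Proposition \ref{prop:recovery-successive-sections} to cut the range further down to $a\leq s$. If $\Delta(E)<\min\{d,m-r\}$, i.e.\ $s\geq 1$, that proposition gives two facts. First, the bidegrees with $s<a\leq \min\{d,m-r\}$ vanish. Second, $\mathscr D(E_s)=\mathscr D(E)$, so the term $a=s$ equals $\xi_{d-s+1}(E)\deg(\mathscr D(E))$. The remaining indices $1\leq a\leq s-1$ give exactly the displayed sum. This yields the formula.

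The degenerate case $s=0$, i.e.\ $\Delta(E)=\min\{d,m-r\}$, needs a separate check: there I must show that all terms with $1\leq a\leq\min\{d,m-r\}$ vanish. For this I use Lemma \ref{observación positividad}. A nonzero bidegree at index $a$ forces $\dim\mathscr D(E_a)=a+r$. Since $\mathscr D(E_a)\subseteq\mathscr D(E)$ and
\[
\dim\mathscr D(E)=\min\{d+r,m\}-\min\{d,m-r\}=r,
\]
this is impossible for $a\geq1$. With the convention $\xi_{d+1}(E)=0$, the formula reduces to $\deg(E)=\deg(V)$, which is consistent. Equivalently, by Lemma \ref{lema 7}, $E$ is trivially embedded in this case.

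The main point to verify is the boundary behaviour of Proposition \ref{prop:recovery-successive-sections}: that its vanishing statement covers the whole range $s<a\leq\min\{d,m-r\}$. When $d>m-r$ there are further indices $a$ with $m-r<a\leq d$, and for these I rely on the $\beta>m-1$ convention rather than on that proposition. I would check this bookkeeping carefully, together with the fact that $\xi_{d-s+1}(E)$ is nonzero exactly when $s\geq1$.
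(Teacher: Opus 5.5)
Your proposal is correct and follows essentially the same route as the paper. Both arguments use van der Waerden's sum, Lemma \ref{lemacancelacion} for the terms with $\alpha\geq d$, Proposition \ref{producto} for the remaining terms, and Proposition \ref{prop:recovery-successive-sections} to cut the sum off at $a=s$ and identify $\mathscr D(E_s)=\mathscr D(E)$. When $s=0$, both reduce to the trivially embedded case via Lemma \ref{lema 7}.

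The boundary bookkeeping you flag is harmless. Proposition \ref{prop:recovery-successive-sections} already covers every index $s\leq a\leq d$, including $m-r<a\leq d$, so the $\beta>m-1$ convention is only a redundant second justification.
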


\begin{proof}
If $s = 0$, Lemma \ref{lema 7} implies $E$ is trivially embedded and the result follows. By Observation~\ref{vdw},
\[
    \deg(E)
    =
    \sum_{\alpha+\beta=d+r-1}\deg_{\alpha,\beta}(\overline E').
\]
Lemma~\ref{lemacancelacion} shows that the terms with $\alpha\geq d+1$ vanish
and that the term with $\alpha=d$ equals $\deg(V)$. For each remaining term,
write $\alpha=d-a$, with $1\leq a\leq d$, and apply
Proposition~\ref{producto}. The result follows immediately from Proposition \ref{prop:recovery-successive-sections}.
\end{proof}

In particular, in our context, Huh's Theorem states the following:
\begin{proposition}\label{proposition huh}
    Let $V \subseteq \mathbb{A}^n$ be an irreducible algebraic variety of dimension $d$ and $E$ an embedded algebraic vector bundle of rank $r$ over $V$ and $1 \leq a \leq d$. Denote $s:=\min\{d,m-r\}-\Delta(E),$ $b_0 = \deg(V)$ and $b_a = \deg_{d-a,a+r-1}(\overline E')  =
    \xi_{d-a+1}(E)\deg\bigl(\mathscr{D}(E_a)\bigr) $ for $1 \leq a \leq d$. Then we have that the sequence $(b_j)_{j=0}^d$ is log-concave with no internal zeros: 
    $$b_a^2 \geq b_{a+1} b_{a-1}, \qquad b_i \neq 0 \ \iff \ 0 \leq i \leq s$$
\end{proposition}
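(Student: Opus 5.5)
The plan has two independent parts: log-concavity from Huh's theorem on multidegrees, and the no-internal-zeros pattern from results already proved in the paper.

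\emph{Log-concavity.} I would apply \cite[Theorem 21]{Huh2012}, which says that the multidegrees (bidegrees) of an irreducible subvariety of $\mathbb P^n\times\mathbb P^{m-1}$ form a log-concave sequence with no internal zeros. The required irreducibility comes from Proposition~\ref{proposición completa}: $E$ is irreducible, so its projective closure $\overline E$ is irreducible. By Lemma~\ref{asd}, $I(\overline E)$ is bihomogeneous. Hence the biprojective interpretation $\overline E'$ is irreducible, being the image of the irreducible set $\overline E$ minus the two coordinate subspaces under the natural rational map. Huh's theorem then applies to the sequence $\deg_{\alpha,\beta}(\overline E')$ with $\alpha+\beta=d+r-1$. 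Reindexing by $\alpha=d-a$ turns it into the sequence $(b_a)_{a=0}^{d}$ together with the terms for $\alpha\geq d+1$, which vanish by Lemma~\ref{lemacancelacion}(1). Lemma~\ref{lemacancelacion}(2) identifies $b_0=\deg(V)$, and Proposition~\ref{producto} identifies $b_a=\xi_{d-a+1}(E)\deg(\mathscr D(E_a))$. A contiguous subsequence of a log-concave sequence is still log-concave, so $b_a^2\geq b_{a+1}b_{a-1}$ for $1\leq a\leq d-1$. The statement is vacuous at the ends, or holds there if we set $b_{d+1}=b_{-1}=0$.

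\emph{Vanishing pattern.} I would not rely on Huh for this; the paper's own results give it directly. First, $b_0=\deg(V)>0$. Next, treat the trivially embedded case $s=0$ separately. There $\dim\mathscr D(E_a)=r<a+r$ for every $a\geq1$, so $b_a=0$ by Lemma~\ref{observación positividad}, and the claimed pattern holds. When $s\geq1$, the hypothesis $\Delta(E)<\min\{d,m-r\}$ of Proposition~\ref{prop:recovery-successive-sections} is satisfied. That proposition gives $b_a=0$ for $a>s$ and $b_s\neq0$. For $1\leq a<s$, I would show $\dim\mathscr D(E_a)=a+r$, so that $b_a>0$ by Lemma~\ref{observación positividad}. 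Since $b_0$ and $b_s$ are nonzero and Huh's no-internal-zeros property holds, this is automatic. Alternatively, it follows from the monotonicity $\dim\mathscr D(E_{a})\geq \dim\mathscr D(E_{a+1})-1$: cutting $V_{a+1}$ by one general hyperplane drops $\dim\mathscr D$ by at most one. Starting from $\dim\mathscr D(E_s)=s+r$ and using $\dim\mathscr D(E_a)\leq a+r$, we get equality for all $a\leq s$.

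The main obstacle I expect is justifying that Huh's theorem applies to $\overline E'$. Specifically, one must check that $\overline E'$ is irreducible and not merely equidimensional, and that Huh's hypotheses, stated over $\mathbb C$ or for the Chow class of an irreducible subvariety, transfer to an arbitrary algebraically closed field of characteristic zero. I would handle irreducibility through the surjection from $\overline E$ minus the coordinate subspaces, whose points $(\overline{\mathbf x},\mathbf y)$ with both parts nonzero map onto $\overline E'$. I would handle the field issue by the Lefschetz principle, since bidegrees are computed by finitely many polynomial data.
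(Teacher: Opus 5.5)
Your proposal is correct and is essentially the paper's argument: the paper's proof consists solely of the citation \cite[Theorem 21]{Huh2012}, and you use the same theorem. You also make explicit what the paper leaves implicit, namely the irreducibility of $\overline E'$ needed to invoke Huh, the identification $b_0=\deg_{d,r-1}(\overline E')$ together with the vanishing of the terms with $\alpha\geq d+1$, and the use of Proposition~\ref{prop:recovery-successive-sections} to place the last nonzero term exactly at $a=s$. Your alternative monotonicity route for $1\leq a<s$ is only sketched (it needs the generic-fiber argument giving $\dim\mathscr D(E_a)=\min\{\dim\mathscr D(E_{a+1}),a+r\}$), but it is not needed, since Huh's no-internal-zeros property already covers it.
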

\begin{proof}
    See \cite[Theorem 21]{Huh2012}.
\end{proof}

 Notice we can iterate these estimates and for every $1 \leq a \leq s:= \min\{d,m-r\}-\Delta(E)$ obtain:
\begin{equation}\label{ecuacion huh}
    \xi_{d-a+1}(E)\deg(\mathscr{D}(E_a)) \leq \frac{\deg(\mathscr{D}(E_1))^a \xi_{d}(E)^a}{\deg(V)^{a-1}}.
\end{equation}
So, by Theorem \ref{teorema formula} a full general bound for $\deg(E)$ can be obtained in terms of $\deg(\mathscr{D}(E_1)) \xi_d(E)$ and $\deg(V)$:
\begin{equation*}
    \deg(E) \leq \deg(V)\sum_{i=0}^s \left ( \frac{\deg(\mathscr{D}(E_1))\xi_{d}(E)}{\deg(V)} \right)^i.
\end{equation*}

\subsection{Embedded algebraic vector bundles of minimal degree}\label{subsec trivial} \ \medskip

Recall from Example~\ref{ex:trivially-embedded-bundle} that an embedded
algebraic vector bundle
\[
E\subseteq\mathbb A^{n+m}
\]
over \(V\subseteq\mathbb A^n\) is said to be \emph{trivially embedded}
if $E=V\times L$
for some linear subspace \(L\subseteq\mathbb A^m\). Equivalently, all
the fibers of \(E\) coincide with the same linear subspace \(L\).

We first characterize embedded triviality by maximal directional
defect and the vanishing of a single bidegree. We then relate
these conditions to the minimality of the degree of the total space.

\begin{proposition}\label{proposición cancelación}
Let \(V\subseteq\mathbb A^n\) be an irreducible algebraic variety of
dimension \(d\geq1\), and let
\(E\subseteq\mathbb A^{n+m}\) be an embedded algebraic vector bundle
of rank \(r\) over \(V\). The following conditions are equivalent:
\begin{enumerate}
    \item \(E\) is trivially embedded.
    
    \item \label{(2)} $\Delta(E) = \min\{d,m-r \}$
    
    \item $\deg_{d-1,r}(\overline E')=0$
\end{enumerate}
\end{proposition}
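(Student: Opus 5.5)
I will prove the cycle of implications $(1)\Rightarrow(2)\Rightarrow(3)\Rightarrow(1)$, using only Lemma~\ref{lema 7}, Lemma~\ref{observación positividad} with $a=1$, and Proposition~\ref{prop:recovery-successive-sections}.

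\textbf{(1) $\Leftrightarrow$ (2).} By Lemma~\ref{lema 7}, $E$ is trivially embedded if and only if $\dim\mathscr D(E)=r$. Since $\min\{d+r,m\}-r=\min\{d,m-r\}$, this says exactly that $\Delta(E)=\min\{d,m-r\}$. So (1) and (2) are equivalent directly from the definition of $\Delta(E)$.

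\textbf{(1) $\Rightarrow$ (3).} Assume $E=V\times L$. Then $E_1=V_1\times L$ and $\mathscr D(E_1)=L$, which has dimension $r<1+r$. Lemma~\ref{observación positividad} with $a=1$ (allowed since $d\ge 1$) then gives $\deg_{d-1,r}(\overline E')=0$. This also covers the case $r=m$, where the bidegree vanishes trivially.

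\textbf{(3) $\Rightarrow$ (1).} This is the main step. Suppose $\deg_{d-1,r}(\overline E')=0$. By Lemma~\ref{observación positividad} with $a=1$, $\dim\mathscr D(E_1)<1+r$, so $\dim\mathscr D(E_1)=r$ by Lemma~\ref{lema 7} applied to $E_1$. Since $V_1$ is an irreducible curve and $E_1$ is a bundle over it, Lemma~\ref{lema 7} shows that $E_1$ is trivially embedded: all $L_p$ with $p\in V_1$ coincide with one subspace $L$.

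To pass from the curve section back to $E$, argue by contradiction. Suppose $E$ is not trivially embedded, so that $\Delta(E)<\min\{d,m-r\}$. If $m=r$ every bundle is trivial, so we may assume $r<m$, and then $s:=\min\{d,m-r\}-\Delta(E)\ge1$. Proposition~\ref{prop:recovery-successive-sections} says the bidegree for $a=s$ is nonzero and those for $a>s$ vanish. Proposition~\ref{proposition huh} (log-concavity with no internal zeros of $(b_a)$) then gives $b_1\neq0$ because $1\le s$. But $b_1=\deg_{d-1,r}(\overline E')=0$, a contradiction.

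If we prefer to avoid invoking Huh's theorem, there is a geometric alternative. A general curve section passes through two general points $p,p'$ of $V$ (general lines meet $V$ in general points). Then $L_p=L_{p'}$ for general pairs, so $\Gamma_E$ is constant on a dense open set and hence constant. Making this genericity statement precise, namely that the points of a general $V_1$ are jointly general in $V\times V$, is the delicate point. The Huh route avoids it, so I would use that as the primary argument.
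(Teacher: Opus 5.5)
Your proof is correct and follows essentially the same route as the paper: (1)$\Leftrightarrow$(2) via Lemma~\ref{lema 7}, (1)$\Rightarrow$(3) via Lemma~\ref{observación positividad} with $a=1$, and (3)$\Rightarrow$(1) via the no-internal-zeros property of Proposition~\ref{proposition huh}. You additionally spell out, through Proposition~\ref{prop:recovery-successive-sections}, that $b_s\neq 0$ with $s\geq 1$ (which forces $b_1\neq0$); two small remarks are that your opening sentence omits that you rely on Huh's theorem, and that neither the observation that $E_1$ is trivially embedded nor the geometric alternative is needed for this argument.
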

    \begin{proof}
Conditions~(1) and~(2) are equivalent by Lemma~\ref{lema 7},
and $(1)\Rightarrow(3)$ follows from
Lemma~\ref{observación positividad}. Finally, condition $(3)$ implies $(1)$ by Proposition \ref{proposition huh}, as the bidegree sequence has no internal zeros.
\end{proof}

Notice that \ref{teorema formula} shows that for an embedded algebraic vector bundle $E$ we have the inequality:
$$\deg(E) \geq \deg(V).$$
The next result is an extension of \cite[Theorem 35]{jeronimo2025geometric} to the setting of embedded algebraic vector bundles. It characterizes the embedded algebraic vector bundles which have this minimal degree. 

\begin{corollary}\label{coro: triviales}
Let \(V\subseteq\mathbb A^n\) be an irreducible algebraic variety and
let \(E\subseteq\mathbb A^{n+m}\) be an embedded algebraic vector
bundle over \(V\). Then
\[
E\text{ is trivially embedded}
\quad\Longleftrightarrow\quad
\deg(E)=\deg(V).\] 
\end{corollary}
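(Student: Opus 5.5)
We prove the two implications separately, using the bidegree decomposition of Observation~\ref{vdw} together with Lemma~\ref{lemacancelacion} and Proposition~\ref{proposición cancelación}. Let $d=\dim V$ and $r=\operatorname{rk}(E)$.

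Equation~\eqref{eq:vdw}, combined with Lemma~\ref{lemacancelacion}, gives
\[
\deg(E)=\deg(V)+\sum_{a=1}^{d}\deg_{d-a,a+r-1}(\overline E').
\]
Every summand in the sum is a nonnegative integer, since it counts the points of a general intersection. Therefore $\deg(E)=\deg(V)$ holds exactly when all the bidegrees $\deg_{d-a,a+r-1}(\overline E')$ with $1\le a\le d$ vanish.

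Suppose first that $E$ is trivially embedded, so $E=V\times L$ with $\dim L=r$. Then $\mathscr D(E_a)=L$ for every $a$. Hence $\dim\mathscr D(E_a)=r<a+r$, and Lemma~\ref{observación positividad} shows that every bidegree with $a\ge1$ vanishes. This gives $\deg(E)=\deg(V)$. A more direct argument also works: $E$ is isomorphic to $V\times\mathbb A^r$ by a linear change of coordinates, and a generic affine section of complementary dimension meets each fiber $\{p\}\times L$ over the $\deg(V)$ cut points exactly once.

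Conversely, suppose $\deg(E)=\deg(V)$. If $d=0$, then $V$ is a single point and $E$ is trivially embedded. If $d\ge1$, the observation above shows in particular that $\deg_{d-1,r}(\overline E')=0$, which is the term with $a=1$. Proposition~\ref{proposición cancelación}, implication $(3)\Rightarrow(1)$, then shows that $E$ is trivially embedded.

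The only delicate point is that the converse depends on the no-internal-zeros property used inside Proposition~\ref{proposición cancelación}. Here, however, all the bidegrees with $a\geq1$ vanish, not just the one with $a=1$. So we can avoid Huh's theorem and argue directly from Theorem~\ref{teorema formula}. Vanishing of the $a=s$ term means $s=0$, because that term is nonzero whenever $s\ge1$ by Proposition~\ref{prop:recovery-successive-sections}. Then $s=0$ gives $\Delta(E)=\min\{d,m-r\}$, and Lemma~\ref{lema 7} shows that $E$ is trivially embedded.
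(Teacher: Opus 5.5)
Your proof is correct, and your final paragraph is exactly the paper's argument: by Theorem~\ref{teorema formula} together with Proposition~\ref{prop:recovery-successive-sections}, the equality $\deg(E)=\deg(V)$ forces $s=0$, which is condition~(2) of Proposition~\ref{proposición cancelación}, and Lemma~\ref{lema 7} then gives triviality. Your first converse argument through condition~(3) is also valid, but it relies on Huh's no-internal-zeros theorem, which is not needed here; your forward direction via Lemma~\ref{observación positividad} simply makes explicit the $s=0$ case that the paper leaves implicit in Theorem~\ref{teorema formula}.
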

\begin{proof}
    The result follows immediately from Condition \ref{(2)} in Proposition \ref{proposición cancelación} and Theorem \ref{teorema formula}.
\end{proof}

\section{Multiplicities and Tangent Bundle Degrees}

In this section, we apply the theory developed in
Sections \ref{asdff} and \ref{asdasdasd} to classical constructions.
The degree formula in Theorem \ref{teorema formula} plays a
central role in proving Theorem \ref{teorema cuadrado}
(Theorem C in the introduction), which gives a positive answer
to \cite[Question 33]{jeronimo2025geometric}.

\medskip

The section is organized as follows.
Subsection \ref{multiplicities} relates the multiplicities
$\xi_i$ of the tangent bundle $TV$, the secant bundle
$\mathcal{S}(V)$, and the tangent cone bundle
$\mathbb{T}V_{\operatorname{cone}}$ to classical enumerative
invariants (see Remark \ref{remark seto},
Proposition \ref{prop:secant-multiplicities}, and
Observation \ref{ejemplo TV}).
Subsection \ref{deg estiamtes} establishes bounds for the
geometric degree of tangent bundles of smooth algebraic
varieties, culminating in Theorem \ref{teorema cuadrado}.

\subsection{Multiplicities and enumerative invariants}\label{multiplicities}
\ \medskip

The multiplicities introduced in Proposition~\ref{producto} recover
classical enumerative invariants in the tangent and secant cases.
For instance, for a smooth affine curve $\CC \subseteq \mathbb{A}^n$,
the invariant $\omega(\CC)$ introduced in
\cite[Definition 17]{jeronimo2025geometric} counts the points
$p \in \CC$ satisfying $T_p\CC = T_q\CC$ for a general point
$q \in \CC$. The following observation is therefore immediate
from the definitions.

\begin{observation}\label{ejemplo TV}
Let $\CC \subseteq \mathbb{A}^n$ be a smooth irreducible affine curve.
Then
$$
\omega(\CC) = \xi_1(T\CC).
\qed
$$
\end{observation}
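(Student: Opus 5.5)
The plan is to unwind both sides at a general point and check that they count the same set. First fix the setup. $\CC$ is a smooth irreducible affine curve, so $d=1$ and $T\CC$ is an embedded algebraic vector bundle of rank $r=1$ in $\mathbb A^{2n}$, with $m=n$. The only restriction in play is $a=d=1$, and $E_1$ is obtained by cutting with an affine linear subspace of dimension $n-(d-a)=n$, that is, with all of $\mathbb A^n$. Hence $E_1=T\CC$, $V_1=\CC$, and $\mathscr D(E_1)=\operatorname{Tan}_0(\CC)$.

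Next, apply Proposition~\ref{producto} and Remark~\ref{rmk: w(E)} with $a=1$, which gives $\xi_1(T\CC)=\xi_{d-a+1}(T\CC)$. Two cases arise.

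\emph{Case $\dim\operatorname{Tan}_0(\CC)=2=a+r$.} Then $\xi_1(T\CC)$ is the degree of the dominant, generically finite projection $\pi_2\colon T\CC\to\operatorname{Tan}_0(\CC)$. For a general direction $v\in\operatorname{Tan}_0(\CC)$ this degree equals
\[
\#\{p\in\CC\mid v\in T_p\CC\}.
\]
Since every $T_p\CC$ is a line through the origin, $v\in T_p\CC$ holds exactly when $T_p\CC=\operatorname{Span}(v)$.

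The key step, and essentially the only one with content, is to pass from a general $v\in\operatorname{Tan}_0(\CC)$ to a general point $q\in\CC$. The set of pairs $(q,v)\in T\CC$ with $v\neq0$ is a dense open subset of $T\CC$, and $\pi_2$ maps it dominantly onto $\operatorname{Tan}_0(\CC)$. So a general $v$ is of the form $v\in T_q\CC\setminus\{0\}$ with $(q,v)$ general in $T\CC$. In particular $q$ is general in $\CC$, because the first projection $\pi_1$ is open on the locally trivial bundle. For such $v$,
\[
\{p\mid T_p\CC=\operatorname{Span}(v)\}=\{p\mid T_p\CC=T_q\CC\},
\]
and the cardinality of the right-hand side is exactly $\omega(\CC)$ as in \cite[Definition 17]{jeronimo2025geometric}. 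The only care needed is to intersect the open sets where each count is constant, using Corollary~\ref{coro:fibra} for $\xi_1$ and the genericity built into the definition of $\omega$; this is routine.

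\emph{Case $\dim\operatorname{Tan}_0(\CC)<2$.} Then $\xi_1(T\CC)=0$ by convention. By Lemma~\ref{lema 7}, $T\CC$ is trivially embedded, so all tangent lines coincide and $\CC$ is an affine line. In this case the convention of \cite{jeronimo2025geometric} for $\omega$ in the line case should also assign the value $0$; the fiber $\{p\mid T_p\CC=T_q\CC\}$ is infinite, so it contributes no finite count. Matching the two conventions is the only point to check, and I would verify it directly against \cite[Definition 17]{jeronimo2025geometric}.
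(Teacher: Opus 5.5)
Your proposal is correct and follows the paper's route. The paper states the observation as immediate from the definitions, namely Remark~\ref{rmk: w(E)} with $d=a=1$ (so $E_1=T\CC$, $V_1=\CC$ and $\xi_1(T\CC)=\#\{p\in\CC\mid v\in T_p\CC\}$ for a general $v\in\operatorname{Tan}_0(\CC)$), matched with $\omega(\CC)$ by taking $(q,v)$ in the intersection of the relevant dense open subsets of $T\CC$; your treatment of the line case via Lemma~\ref{lema 7} and the convention in \cite{jeronimo2025geometric} makes explicit a point the paper leaves implicit. One small correction: what makes $q$ general is continuity of $\pi_1$ together with irreducibility of $T\CC$, since $\pi_1^{-1}$ of a dense open subset of $\CC$ is then dense open in $T\CC$; openness of $\pi_1$ is not what is needed.
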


A further example is provided by the secant degree, which we
recall below.

\begin{definition}\label{def:secantdegree}
Let \(X\subseteq\mathbb P^n\) be an irreducible projective variety of
dimension \(d\). The \emph{secant degree}
        of \(X\), denoted by \(\mu(X)\), is the number of secant lines to \(X\) passing through a general point of \(\operatorname{Sec}(X)\) if $\dim(\operatorname{Sec}(X))=2d+1$ and $0$ otherwise.
\end{definition}

\begin{observation}
\label{prop:secant-multiplicities}
Let $V \subseteq \mathbb{A}^n$ be an irreducible algebraic variety of dimension $d>0$ and \(X\subseteq\mathbb P^n\) be its projective closure.  Assume that $ 2d \leq n$. Then:
\[
    \xi_1(\mathcal{S}(V))
    =
    2\mu(X) \]

\end{observation}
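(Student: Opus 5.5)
We use Remark~\ref{rmk: w(E)} with $E=\mathcal S(V)$. The base is $U\subseteq (V\times V)\setminus\Delta_V$, of dimension $2d$, the rank is $r=2$, and $m=n+1$. Hence $\xi_1(\mathcal S(V))$ corresponds to $a=2d$, so $E_a=E$ and no linear section is taken. Since $2d\le n$, we have $\min\{2d+2,n+1\}=2d+2$. Therefore $\xi_1(\mathcal S(V))$ vanishes unless $\dim\mathscr D(\mathcal S(V))=2d+2$. By Example~\ref{ejemplo fibrados}, $\mathscr D(\mathcal S(V))=C(\operatorname{Sec}(X))$, so this condition says exactly that $\dim \operatorname{Sec}(X)=2d+1$.

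\emph{Degenerate case.} If $\dim\operatorname{Sec}(X)<2d+1$, then $\xi_1=0$ by convention and $\mu(X)=0$ by Definition~\ref{def:secantdegree}. The equality holds trivially.

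\emph{Nondegenerate case.} Assume $\dim\operatorname{Sec}(X)=2d+1$. By Remark~\ref{rmk: w(E)}, for a general $w\in C(\operatorname{Sec}(X))$,
\[
\xi_1(\mathcal S(V))=\#\{(p,q)\in U \mid w\in \operatorname{Span}((1,p),(1,q))\}.
\]
Fix such a general $w$ and let $[w]$ be the corresponding general point of $\operatorname{Sec}(X)$. By definition there are exactly $\mu(X)$ secant lines through $[w]$. We then do two things.

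First, we show that for general $[w]$ each of these $\mu(X)$ lines is spanned by two distinct points $p\neq q$ of $X$ that both lie in the affine chart $V$, and that the ordered pairs $(p,q)$ and $(q,p)$ both belong to $U$. We also check that such a line meets $X$ in exactly these two points. The argument is a dimension count in the incidence $\{(p,q,[w]) \mid [w]\in\langle p,q\rangle\}$, which has dimension $2d+1$ and maps generically finitely onto $\operatorname{Sec}(X)$. Each bad locus has dimension at most $2d$, so its image is a proper closed subset and a general $[w]$ avoids it:
\begin{itemize}
\item pairs with a point at infinity, i.e.\ in $X\setminus V$;
\item pairs in $(V\times V)\setminus U$;
\item lines that are trisecant, or that meet $X$ in a positive-dimensional set.
\end{itemize}
The trisecant case needs the most care. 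If a general secant line through a general point met $X$ in three or more points, the number of secant lines would be counted differently. Each such line must be counted by $\mu(X)$ as one line, and the map from ordered pairs to lines is then exactly $2:1$ over lines that are $2$-secant.

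Second, we note that the condition $w\in\operatorname{Span}((1,p),(1,q))$ depends only on the line $\langle p,q\rangle$. So it holds for both orderings $(p,q)$ and $(q,p)$, and for no other pairs. This gives $\xi_1(\mathcal S(V))=2\mu(X)$.

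The main obstacle is the genericity step: ensuring that a general point of $\operatorname{Sec}(X)$ lies only on secant lines spanned by exactly two affine points whose ordered pairs lie in $U$. If trisecant lines filled $\operatorname{Sec}(X)$ generically, each line would contribute more than two ordered pairs. I would rule this out by the dimension count above. The trisecant locus is the image of a triple incidence of dimension at most $3d$ with positive-dimensional fibres over its image. Alternatively, one can interpret $\mu(X)$ as counting unordered pairs $\{p,q\}$ through $[w]$, which would make the factor $2$ immediate.
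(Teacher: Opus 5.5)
Your route is the paper's: with $a=2d=\dim U$ one has $E_a=\mathcal S(V)$, and Remark~\ref{rmk: w(E)} identifies $\xi_1(\mathcal S(V))$ with the number of ordered pairs $(p,q)\in U$ whose span contains a general $w\in C(\operatorname{Sec}(X))$. The case $\dim\operatorname{Sec}(X)<2d+1$ gives $0=0$, and otherwise each secant line through $[w]$ should contribute two ordered pairs. Your exclusion of pairs meeting $X\setminus V$ and of pairs outside $U$ is correct: such pairs form a set of dimension at most $2d-1$, so their lines sweep out at most a $2d$-dimensional set. This is more careful than the paper, whose proof simply asserts that the general fibre of $\pi_{\mathrm{sec}}$ consists of the ordered pairs defining secant lines.

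The gap is the trisecant step, which is exactly what makes the factor $2$ correct rather than $k(k-1)$ for $k$-secant lines. You need the pairs $p\neq q$ whose line meets $X$ in a third point to form a proper closed subset of $X\times X$. Your justification (a triple incidence of dimension at most $3d$ with positive-dimensional fibres) does not give this. If a general secant line were trisecant, then
\[
\{(p,q,s)\in X^3 \mid p,q,s \text{ pairwise distinct and collinear}\}
\]
would have dimension exactly $2d$, and nothing in your count excludes that.

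No pure dimension count can exclude it, because the claim is false in positive characteristic. Let $Q\geq 3$ be a power of the characteristic. Then $P(t)=(t,t^Q,t^{Q^2})$ is $\mathbb F_Q$-linear, so $\lambda P(a)+(1-\lambda)P(b)=P(\lambda a+(1-\lambda)b)$ for $\lambda\in\mathbb F_Q$. The closure of its image in $\mathbb P^3$ is a nondegenerate curve with secant variety $\mathbb P^3$, all of whose secant lines are $Q$-secant. Each line through a general point therefore contributes $Q(Q-1)$ ordered pairs.

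The missing ingredient is the trisecant lemma, which genuinely uses characteristic zero: if $Y$ is irreducible of dimension $d$ and spans a $\mathbb P^k$ with $k\geq d+2$, then a general secant line of $Y$ meets $Y$ only in its two defining points. Its hypothesis is automatic here, since $\dim\operatorname{Sec}(X)=2d+1$ forces $\dim\langle X\rangle\geq 2d+1\geq d+2$. With it (lines contained in $X$ being a special case), the trisecant pairs have dimension at most $2d-1$. The union of the lines they span then has dimension at most $2d$, a general $[w]$ lies on none of them, and your count gives $2\mu(X)$.

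Your fallback of reading $\mu(X)$ as a count of unordered pairs is not available. Definition~\ref{def:secantdegree} counts lines, and identifying lines with unordered pairs is precisely the trisecant statement.

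A minor slip: $\min\{2d+2,n+1\}=2d+2$ fails when $2d=n$. This is harmless, since $\xi_1(\mathcal S(V))\neq 0$ exactly when $\dim\mathscr D(\mathcal S(V))=\dim\mathcal S(V)=2d+2$. For $2d=n$ one has $\dim\operatorname{Sec}(X)\leq n=2d$, so both sides vanish.
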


\begin{proof}
Let \(\pi_{\mathrm{sec}}\) be the projection of \(\mathcal{S}(V)\) onto
the direction coordinates. If
\(\dim\operatorname{Sec}(X)=2d+1\), then \(\pi_{\mathrm{sec}}\) is
generically finite. Since its general fiber parametrizes ordered pairs
defining secant lines, Corollary~\ref{coro:fibra} and
Remark~\ref{rmk: w(E)} give
\[
    \xi_1(\mathcal{S}(V))
    =
    \#\pi_{\mathrm{sec}}^{-1}(q)
    =
    2\mu(X).
\]
If $\dim(\operatorname{Sec}(X)) < 2d+1$ then $\mu(X) =0$ and also $\xi_1(\mathcal{S}(V)) = 0$
\end{proof}

We continue with a definition due to Severi
(see \cite{Severi1902} or \cite{HernandezGomezRusso2026}).

\begin{definition}\label{ceto}
Let $X\subseteq\mathbb P^n$ be an irreducible projective
variety of dimension $d$, and let $0\leq i\leq d$.
If $d+i\leq n$, choose a general linear subspace
$L\subseteq\mathbb P^n$ of codimension $d-i$ and a general
linear subspace $M\subseteq L$ of codimension $2i$ in $L$.
The \emph{$i$-th ceto} of $X$ is
$$
\omega_i(X):=
\#\bigl\{x\in(X\cap L)_{\mathrm{reg}}:
\mathbb T_x(X\cap L)\cap M\neq\varnothing\bigr\}.
$$
If $d+i>n$, we set $\omega_i(X)=0$.
\end{definition}

Now we will relate the Ceti with the bidegrees of $\mathbb{T}V_{\operatorname{cone}}.$

\begin{proposition}\label{proposetomultig}
Let \(X\subseteq\mathbb P^n\) be a smooth irreducible projective variety of dimension \(d\), let \(V=X\cap\{x_0\neq0\}\), and consider the embedded algebraic vector bundle \(\mathbb{T}V_{\mathrm{cone}}\). Then, for every \(0\leq i\leq d\),
\[
    \omega_i(X)
    =
    \deg_{d-i,d+i}
    \bigl((\overline{\mathbb{T}V_{\mathrm{cone}}})'\bigr).
\]
\end{proposition}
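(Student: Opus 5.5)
The plan is to unwind both sides as counts of incidences between points of $X\cap L$ and their projective tangent spaces, and then show that these counts agree. Set $E:=\mathbb{T}V_{\mathrm{cone}}$, an embedded bundle of rank $d+1$ over $V$ in $V\times\mathbb A^{n+1}$. Then $\overline E'\subseteq\mathbb P^n\times\mathbb P^n$ has dimension $2d$, and $(\alpha,\beta)=(d-i,d+i)$ is an admissible pair.

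\emph{Step 1: the first factor.} Cut with a general linear subspace $L\subseteq\mathbb P^n$ of codimension $d-i$, as in the paragraph preceding Corollary~\ref{observación del lema tecnico}. By Bertini, $X\cap L$ is smooth and irreducible of dimension $i$, and $\overline E'\cap(L\times\mathbb P^n)=\overline{E\cap(L\times\mathbb A^{n+1})}'$. Over a point $x\in V\cap L$, the fibre of $\overline E'$ is $\mathbb T_xX\simeq\mathbb P^d$.

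Since $X$ is smooth, the Gauss map is regular on all of $X$. Hence the closure $\overline E'\cap(L\times\mathbb P^n)$ should be exactly
\[
\mathcal T_L:=\{(x,[v])\mid x\in X\cap L,\ [v]\in\mathbb T_xX\},
\]
a $\mathbb P^d$-bundle over $X\cap L$. This is irreducible of dimension $i+d$ and also includes the points $x$ at infinity. I would justify this by noting that $\mathcal T_L$ is closed and irreducible of the right dimension, and that it contains the affine part as a dense subset.

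Consequently,
\[
\deg_{d-i,d+i}(\overline E')=\#\bigl(\mathcal T_L\cap(\mathbb P^n\times N)\bigr)
\]
for a general $(n-d-i)$-plane $N\subseteq\mathbb P^n$. The right-hand side counts pairs $(x,[v])$ with $x\in X\cap L$ and $[v]\in\mathbb T_xX\cap N$.

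\emph{Step 2: the ceto side.} For $M\subseteq L$ a general subspace of codimension $2i$ in $L$, we have $\dim M=n-d-i$, the same dimension as $N$. Since $M\subseteq L$,
\[
\mathbb T_x(X\cap L)\cap M=\mathbb T_xX\cap L\cap M=\mathbb T_xX\cap M .
\]
Moreover $\dim(\mathbb T_xX\cap L)=i$, while $M$ has codimension $2i$ in $L$. So for each relevant $x$ the intersection $\mathbb T_xX\cap M$ should be a single reduced point. A dimension count on $\mathcal T_L$, inside the $(n-d+i)$-dimensional space $L$, supports this. It follows that
\[
\omega_i(X)=\#\bigl(\mathcal T_L\cap(\mathbb P^n\times M)\bigr).
\]
The case $d+i>n$ is handled separately: there both sides vanish, because the bidegree is set to $0$ when the required linear space on the second factor is empty.

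\emph{Step 3: specialising $N$ into $L$.} This is the main obstacle. The bidegree uses $N$ general in $\mathbb P^n$, whereas the ceto uses $M$ general among planes contained in $L$. I would argue via intersection theory on $\mathbb P^n\times\mathbb P^n$. The number $\#(\mathcal T_L\cap(\mathbb P^n\times N))$ equals the intersection number $[\mathcal T_L]\cdot[\mathbb P^n\times N]$. This number depends only on the class of $N$, so it is the same for every $N$ meeting $\mathcal T_L$ properly and transversally.

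It therefore suffices to show that, for $M\subseteq L$ general, the intersection $\mathcal T_L\cap(\mathbb P^n\times M)$ is finite and reduced. Note that $\mathcal T_L\subseteq(X\cap L)\times L$ fails in general, since $\mathbb T_xX\not\subseteq L$. However, its intersection with $\mathbb P^n\times L$ is the variety
\[
\mathcal T_L^{L}:=\{(x,[v])\mid x\in X\cap L,\ [v]\in \mathbb T_xX\cap L\},
\]
which is a $\mathbb P^i$-bundle, transversal to $\mathbb P^n\times L$ at general points.

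The key step is then to apply Kleiman's transversality theorem to $\mathcal T_L^{L}\subseteq (X\cap L)\times L$, using the group $\mathrm{PGL}(L)$ acting on the second factor; this group acts transitively on $L$. This gives finiteness and reducedness of $\mathcal T_L^{L}\cap((X\cap L)\times M)$. Combined with the transversality of $\mathcal T_L$ and $\mathbb P^n\times L$ along these points, which again follows from genericity of $L$, this yields multiplicity one.

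I expect this transversality bookkeeping to be the most delicate point. In particular one must also ensure that no point lies where $X\cap L$ is singular; this cannot happen, since $X\cap L$ is smooth by Bertini.
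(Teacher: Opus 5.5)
Your proposal is correct and follows essentially the same route as the paper: identify $\overline{E}'$ with the $\mathbb{P}^d$-bundle $\{(x,[y]) : [y]\in\mathbb{T}_xX\}$, cut by $L\times M$ with $M\subseteq L$, check that the intersection is finite with simple points, and then use multiprojective B\'ezout to identify the count with $\deg_{d-i,d+i}$ (in your version, the Chow-class equality $[\mathbb{P}^n\times M]=[\mathbb{P}^n\times N]$). Your Kleiman argument, together with the transversality of $\mathcal{T}_L$ and $\mathbb{P}^n\times L$ (which in fact holds at every point, not only at general ones, because $L$ meets $X$ transversally), just makes explicit the ``proper intersection with simple points'' step that the paper states without detail.
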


\begin{proof}
If $d+i>n$, both sides vanish by convention.
Assume therefore that $d+i\leq n$.
Since $X$ is smooth and $V$ is dense in $X$, one has
$$
(\overline{\mathbb TV_{\mathrm{cone}}})'
=
\left\{
(x,[y])\in X\times\mathbb P^n:
[y]\in\mathbb T_xX
\right\}.
$$

Let $L\subseteq\mathbb P^n$ be a general linear subspace
of codimension $d-i$, and set $Y=X\cap L$.
Then $Y$ is smooth of dimension $i$, and
$$
\mathbb T_xY=\mathbb T_xX\cap L
\qquad\text{for every }x\in Y.
$$
Consequently,
$$
(\overline{\mathbb TV_{\mathrm{cone}}})'\cap(L\times L)
=
\left\{
(x,[y])\in Y\times L:
[y]\in\mathbb T_xY
\right\}.
$$
The equations of $L$ thus have constant rank
$(d+1)-(i+1)=d-i$ on the corresponding tangent vector
spaces. Since $Y$ is smooth, the incidence variety
on the right is smooth of dimension $2i$.

Choose a general linear subspace $M\subseteq L$
of codimension $2i$ in $L$.
Its defining equations cut this incidence variety
in finitely many simple points, or give an empty
intersection.
Moreover, each nonempty intersection
$\mathbb T_xY\cap M$ is linear and finite, hence consists
of exactly one point. By definition of the $i$-th ceto,
$$
\omega_i(X)
=
\#\bigl(
(\overline{\mathbb TV_{\mathrm{cone}}})'
\cap(L\times M)
\bigr).
$$

Finally, $L$ and $M$ have codimensions $d-i$ and $d+i$
in $\mathbb P^n$, respectively.
The intersection is proper and all its points are simple,
so the multiprojective Bézout theorem
\cite[Theorem~1.11]{D2013} gives
$$
\omega_i(X)
=
\deg_{d-i,d+i}
\bigl((\overline{\mathbb TV_{\mathrm{cone}}})'\bigr).
\qedhere$$
\end{proof}

\begin{example}\label{remark seto}
Let $V \subseteq \mathbb{A}^n$ be a smooth irreducible algebraic variety of dimension $d$ with smooth projective closure $X$ and suppose \(n\geq2d\). Taking \(a=d\) in
Proposition~\ref{producto} gives
\begin{equation}\label{ecuacion seto}
\begin{aligned}
\omega_d(X)
=
\deg_{0,2d}\bigl((\overline{\mathbb{T}V_{\mathrm{cone}}})'\bigr) &=
\xi_1(\mathbb{T}V_{\mathrm{cone}})
\deg\bigl(\mathscr{D}(\mathbb{T}V_{\mathrm{cone}})\bigr)\\
&=
\xi_1(\mathbb{T}V_{\mathrm{cone}})\deg\bigl(\Tan(X)\bigr).
\end{aligned}
\end{equation}
Thus \(\xi_1(\mathbb{T}V_{\mathrm{cone}})\) is the tangent degree $\tau(X)$ studied in \cite{HernandezGomezRusso2026}. 
\end{example}

Determining the possible values of the multiplicities is an
interesting problem in its own right. For instance in
\cite[Theorem 10]{HernandezGomezRusso2026}, the authors establish the restriction
\[
\xi_1(\mathbb{T}V_{\operatorname{cone}})\neq1
\]
for a non degenerate $d$-dimensional projective variety
$X\subseteq\mathbb P^{2d}$. For conormal bundles, the story is different. The bidegrees recover polar degrees
(see \cite{KOHN2021157}), and all nonzero multiplicities
$\xi_i(N^*(V))$ equal $1$ \cite{Tevelev2003}.

\begin{remark}\label{remark approach}Jorgenson uses polar-degree vanishing and numerical bounds
to establish numerous cases of the duality defect conjecture
(see \cite[Algorithm~1, Theorem 3.3 and Theorem~4.7]{https://doi.org/10.1112/blms.12379}).
We believe the effective defectivity criterion (see Theorem~\ref{thm defect}),
combined with the log-concavity constraints (see Proposition~\ref{proposition huh}) and their resulting bounds~\eqref{ecuacion huh}, together with explicit bundle equations, provide analogous tools for extending this
computational strategy to other defectivity conjectures such as the Abo-Ottaviani-Peterson conjecture on secant varieties of Segre
varieties (see \cite{AboOttavianiPeterson2009}) and the
Baur-Draisma-de Graaf conjecture on secant varieties of
Grassmannians (see \cite{BaurDraismaDeGraaf2007}).
\end{remark}

\subsection{The Tangent Bundle}\label{deg estiamtes}  \ \medskip

We now specialize the preceding theory to its motivating example: the
tangent bundle of a smooth affine algebraic variety. The geometric
degree of this bundle and its relation with the corresponding variety
of tangent directions were studied in
\cite{jeronimo2025geometric}. Let $s:= \min\{d,n-d\}-\Delta(TV)$. Applying
Theorem~\ref{teorema formula} to \(E=TV\), we obtain
\begin{equation}\label{eq:tangent-bundle-degree-formula}
\deg(TV)
=
\deg(V)
+\xi_{d-s+1}(TV) \deg(\operatorname{Tan}_0(V)) +
\sum_{a=1}^{s-1}
\xi_{d-a+1}(TV)
\deg\bigl(\mathscr{D}((TV)_a)\bigr).
\end{equation}
This in particular recovers \cite[Theorem~21]{jeronimo2025geometric} when $V$ is a curve.

\medskip 
 In \cite[Theorem~30]{jeronimo2025geometric}, is proven that:
\[
\deg(TV)
\leq
\min\left\{
\deg(V)^{\,n-d+1},
\,
\deg(V)\bigl((n-d)(\deg(V)-1)+1\bigr)^d
\right\}.
\]
Although intrinsic, these bounds depend essentially on the dimension
and codimension of \(V\), and their degree as polynomials in $\deg(V)$
increases with these parameters. Motivated by examples (see \cite[Corollary 32]{jeronimo2025geometric}), this led to
\cite[Question~33]{jeronimo2025geometric}, asking whether
\(\deg(TV)\) admits a universal upper bound which is quadratic in
\(\deg(V)\). In this subsection we will give a positive answer to this question (see Theorem \ref{teorema cuadrado} below). We start with an Proposition:

\begin{proposition}\label{thm}
Let \(V\subseteq\mathbb A^n\) be a smooth irreducible affine variety of
dimension \(d\), and suppose that its projective closure
\(X\subseteq\mathbb P^n\) is smooth. Then 
$$\deg(\mathbb{T}V_{\operatorname{cone}})=\begin{cases} \deg(V)^2- 2\mu(X)\deg(\operatorname{Sec}(X)) & \text{if } 2d \leq n. \\
\deg(V)^2 & \text{if } 2d>n.
\end{cases}$$
\end{proposition}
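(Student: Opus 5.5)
We compute $\deg(\mathbb{T}V_{\mathrm{cone}})$ through the van der Waerden decomposition of Observation~\ref{vdw}. The bundle $\mathbb{T}V_{\mathrm{cone}}$ has rank $d+1$ over $V$, with ambient $\mathbb A^{n+1}$, so $m=n+1$ and the bidegrees live in $\mathbb P^n\times\mathbb P^n$. We then identify the nonzero bidegrees with Severi's ceti. Lemma~\ref{lemacancelacion} discards $\alpha\ge d+1$, and Proposition~\ref{proposetomultig} gives $\deg_{d-i,d+i}=\omega_i(X)$ for $0\le i\le d$. The case $i=0$ returns $\deg(V)$, in agreement with Lemma~\ref{lemacancelacion}. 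Hence
\[
\deg(\mathbb{T}V_{\mathrm{cone}})=\sum_{i=0}^{d}\omega_i(X),
\]
with $\omega_i(X)=0$ whenever $d+i>n$. The proof therefore reduces to evaluating $\sum_i\omega_i(X)$.

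The key step is a classical identity of Severi type: $\sum_{i=0}^d\omega_i(X)$ equals the degree of the graph of the identity, i.e.\ of the diagonal $\Delta_X\subseteq X\times X$, computed as a class in $\mathbb P^n\times\mathbb P^n$. I would prove it by degenerating the secant incidence. Consider the join incidence $J=\overline{\{(x,y,[z]) : z\in\langle x,y\rangle\}}$, or more simply $X\times X\subseteq\mathbb P^n\times\mathbb P^n$, whose bidegrees are $\deg_{d,d}=\deg(V)^2$ and zero otherwise. Severi's double point formula then expresses, for a general projection $\mathbb P^n\dashrightarrow\mathbb P^{2d}$ when $2d\le n$, the class of the double point locus as
\[
\deg(V)^2-\sum_{i=0}^{d}\omega_i(X).
\]
Since $X$ is smooth, the double points of a general projection to $\mathbb P^{2d}$ correspond to secant lines meeting the general center of dimension $n-2d-1$. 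Each such line contributes one point of $\operatorname{Sec}(X)\cap(\text{center})$, and there are $\mu(X)$ lines through each such point. This gives $\deg(\operatorname{Sec}(X))\,\mu(X)$ unordered pairs, hence $2\mu(X)\deg(\operatorname{Sec}(X))$ ordered double points. If $\dim\operatorname{Sec}(X)<2d+1$, then $\mu(X)=0$ and there are no double points. I would cite the double point formula in the form: for a smooth $X^d$ generically projected to $\mathbb P^{2d}$, the number of ordered double points equals $\deg(X)^2-\sum_{i=0}^d\omega_i(X)$, with the ceti arising as the Segre/Chern-type contributions of the diagonal. This yields the case $2d\le n$.

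For $2d>n$, the terms $\omega_i$ with $i>n-d$ vanish by convention. The claim then says that the truncated sum $\sum_{i\le n-d}\omega_i(X)$ already equals $\deg(X)^2$. Here I would argue via the diagonal in $\mathbb P^n\times\mathbb P^n$. The class of $X\times X$ restricted to the diagonal gives $\deg(X)^2$ points of $X\cap gX$ for a general $g\in PGL_{n+1}$, which is Bézout since $2d\ge n$ makes the intersection nonempty. The excess-intersection (Fulton--Severi) decomposition of $[\Delta_X]\cdot[X\times X]$ along the diagonal of $\mathbb P^n$ contributes exactly the ceti terms that survive, with no residual double-point term. Both cases thus follow from one formula,
\[
\deg(X)^2=\sum_{i=0}^{\min\{d,n-d\}}\omega_i(X)+2\mu(X)\deg(\operatorname{Sec}(X)),
\]
where the last term is present only when $2d\le n$.

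The main obstacle is this Severi double point identity with ceti: matching conventions, namely ordered versus unordered pairs, and the precise codimensions in Definition~\ref{ceto}. I would first check it against the case $d=1$ of a plane-free space curve, where $\omega_0=\deg$ and $\omega_1$ is the rank, and otherwise rely on \cite{HernandezGomezRusso2026} and Severi's original formula.
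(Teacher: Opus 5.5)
\paragraph{Case $2d\le n$.} Your route here is the paper's. Observation~\ref{vdw}, Lemma~\ref{lemacancelacion} and Proposition~\ref{proposetomultig} give $\deg(\mathbb{T}V_{\mathrm{cone}})=\sum_{i=0}^{d}\omega_i(X)$. Severi's double point formula then finishes, with the ordered double points of a general projection to $\mathbb{P}^{2d}$ counted as $2\mu(X)\deg(\operatorname{Sec}(X))$.

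Your aside that $\sum_i\omega_i(X)$ is the degree of the diagonal $\Delta_X\subseteq\mathbb{P}^n\times\mathbb{P}^n$ is false. Every bidegree of $\Delta_X$ equals $\deg X$, so the total is $(d+1)\deg X$; for a plane cubic this gives $6$, whereas $\omega_0+\omega_1=3+6=9$. You never actually use this claim, though.

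\paragraph{The gap: case $2d>n$.}
\begin{itemize}
\item The claim that $X\cap gX$ consists of $\deg(X)^2$ points is wrong in this range: $X\cap gX$ has dimension $2d-n>0$.
\item More seriously, saying that the excess intersection along the diagonal ``contributes exactly the ceti terms that survive, with no residual double-point term'' is not an argument. It restates the identity $\sum_{i=0}^{n-d}\omega_i(X)=\deg(X)^2$, which is the entire content of this case. Nothing in your proposal relates an excess-intersection term to the $\omega_i$.
\end{itemize}

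\paragraph{How the paper closes it.} The paper uses only the tool you already invoked. Set $c=n-d$, take a general $\Lambda\cong\mathbb{P}^{2c}$, and let $Y=X\cap\Lambda$.
\begin{itemize}
\item By Bertini, $Y$ is a smooth $c$-fold with $\deg Y=\deg X$.
\item $\omega_i(Y)=\omega_i(X)$ for $i\le c$, because general linear sections of $Y$ are general linear sections of $X$.
\item $Y$ already lies in $\mathbb{P}^{2c}$, so it has no double points; equivalently $\mu(Y)=0$. Severi's formula for $Y$ then gives $\deg(X)^2=\sum_{i=0}^{c}\omega_i(X)$.
\item The remaining $\omega_i(X)$ with $i>c$ vanish by convention.
\end{itemize}

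\paragraph{Completing your own route.} Your excess-intersection idea can also be made rigorous, but the computation has to be written out.
\begin{itemize}
\item The self-intersection formula gives $\deg(X)^2=\int_X c_{n-d}(N_X)\,h^{2d-n}$.
\item $(\overline{\mathbb{T}V_{\mathrm{cone}}})'=\mathbb{P}(\widehat TX)$, where $\widehat TX\subseteq\mathcal{O}_X^{n+1}$ is the affine tangent bundle and $\mathcal{O}_X^{n+1}/\widehat TX\cong N_X(-1)$. Hence $\omega_i(X)=\int_X h^{d-i}c_i(N_X(-1))$.
\item Expand $c(N_X(-1))=\sum_j c_j(N_X)(1-h)^{n-d-j}$. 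Precisely when $d\ge n-d$, the sum $\sum_i h^{d-i}c_i(N_X(-1))$ collapses to $c_{n-d}(N_X)h^{2d-n}$, which gives the identity.
\end{itemize}
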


\begin{proof}
If $2d\leq n$ notice that:
\begin{equation*}
\begin{aligned}
2\mu(X) \deg(\operatorname{Sec}(X))
&=
\deg(X)\bigl(\deg(X)-1\bigr)
-\sum_{i=1}^d\omega_i(X)
\\&=
\deg(X)\bigl(\deg(X)-1\bigr)
-\sum_{i=1}^d
\deg_{d-i,d+i}
\bigl((\overline{\mathbb{T}V_{\mathrm{cone}}})'\bigr)
=\deg(X)^2-\deg(\mathbb{T}V_{\mathrm{cone}}),
\end{aligned}
\end{equation*}
where the first equality is Severi's Double Point Formula
(see \cite{Severi1902}), the second follows from
Proposition~\ref{proposetomultig}, and the last one follows from
Observation~\ref{vdw}.

For the other case, suppose that \(2d>n\), and set \(c=n-d\).
Then \(2c<n\). Let $L\simeq\mathbb P^{2c}\subseteq\mathbb P^n$
be a general linear subspace and set \(Y:=X\cap L\). Since \(L\) is general and has
codimension \(2d-n=d-c\), by Bertini's Theorem (see \cite{jouanolou1983}) the variety \(Y\) is smooth of dimension \(c\)
and:
\[
    \deg(Y)=\deg(X)=\deg(V).
\]
Moreover, for \(1\leq i\leq c\), a general \(i\)-dimensional linear
section of \(Y\) is also a general \(i\)-dimensional linear section of
\(X\). Hence
\begin{equation*}\label{setata}
    \omega_i(Y)=\omega_i(X),
    \qquad 1\leq i\leq c.
\end{equation*}
The result then follows by a similar reasoning to the first case as the left part of Severi's double point formula vanishes for $Y \subseteq \mathbb{P}^{2c}$ and the remaining ceti vanish.
\end{proof}

With this we are ready to give a proof of the main Theorem of the section.

\begin{theorem}[Theorem C from the introduction]\label{teorema cuadrado}
Let $V\subseteq\mathbb A^n$ be a smooth irreducible affine
variety, with its tangent bundle naturally embedded as
$TV\subseteq\mathbb A^{2n}$ and let $X \subseteq \mathbb{P}^n$ be its projective closure. Then
$$\deg(TV)\leq \deg(V)^2.$$
Moreover: \begin{enumerate}
    \item For every $d,D \in \mathbb{N}$ such that $d+1 \leq n$ there exists an irreducible smooth affine variety $V_{D,d}$ with $\deg(V_{D,d})=D$ and $\dim(V_{D,d}) = d$ such that the previous inequality is an equality.
    \item If $X$ is smooth and $2\dim(V) \leq n$, then the stronger bound holds:
    $$\deg(TV) \leq \deg(V)^2-2\mu(X)\deg(\operatorname{Sec}(X)).$$
\end{enumerate}
\end{theorem}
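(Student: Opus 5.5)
The plan is to prove the bound by comparing $TV$ with the tangent cone bundle $\mathbb{T}V_{\mathrm{cone}}$, computing $\deg(\mathbb{T}V_{\mathrm{cone}})$ exactly when $X$ is smooth (Proposition~\ref{thm}), and removing the smoothness assumption with the deformation Lemma~\ref{Lema defomracion}.

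\emph{Step 1 (reduction to the cone bundle).} The map $(p,v)\mapsto (p,(0,v))$ identifies $TV$ with
\[
\mathbb{T}V_{\mathrm{cone}}\cap\{y_0=0\}.
\]
Indeed, a point $(p,(\lambda,\lambda p+v))$ of $\mathbb{T}V_{\mathrm{cone}}$ with $\lambda=0$ is exactly $(p,(0,v))$ with $v\in T_pV$. Since $\mathbb{T}V_{\mathrm{cone}}$ is irreducible of dimension $2d+1$ (Proposition~\ref{proposición completa}), $TV$ is a hyperplane section of it. The affine B\'ezout inequality (Proposition~\ref{prop:bezout}) then gives
\[
\deg(TV)\leq\deg(\mathbb{T}V_{\mathrm{cone}}).
\]
When $X$ is smooth, Proposition~\ref{thm} yields $\deg(\mathbb{T}V_{\mathrm{cone}})=\deg(V)^2-2\mu(X)\deg(\operatorname{Sec}(X))$ if $2d\le n$, and $\deg(V)^2$ otherwise. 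This proves both the quadratic bound and item (2) when $X$ is smooth.

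\emph{Step 2 (removing smoothness of $X$).} Here $V$ is smooth but $X$ may be singular at infinity. I would build an irreducible family $\mathcal V\subseteq\mathbb A^n\times\mathbb A^1$ over the $s$-line with the following properties:
\begin{itemize}
\item the fiber over $s=0$ is $V$;
\item every fiber $V_s$ is smooth, irreducible, of dimension $d$ and degree $\deg(V)$;
\item for general $s$ the projective closure $\overline{V_s}$ is smooth.
\end{itemize}
I would then apply Lemma~\ref{Lema defomracion} to
\[
Z:=\{(p,v,s)\mid p\in V_s,\ v\in T_pV_s\}\subseteq\mathbb A^{2n}\times\mathbb A^1 .
\]
Its fibers are the tangent bundles $TV_s$. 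These are irreducible of dimension $2d$ by Proposition~\ref{proposición completa}, and $Z$ is irreducible because it is locally trivial over the irreducible $\mathcal V$ (Observation~\ref{obs:localmente-trivial} applied to the relative Jacobian). The lemma gives $\deg(TV)=\deg(TV_0)\le\deg(TV_s)$ for general $s$, and Step 1 applied to $V_s$ gives $\deg(TV_s)\le\deg(V_s)^2=\deg(V)^2$.

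The main obstacle is producing this smoothing family. The first attempt is to perturb the defining equations of $V$ by generic terms of the same degrees, scaled by $s$: $f_j+s\,g_j$ with $g_j$ generic. The difficulty is that $V$ need not be a complete intersection, so such a perturbation can change dimension, degree, or irreducibility. If this fails, I would try a different reduction: take a general linear section $V_a$ and use the bidegree description in Observation~\ref{vdw} and Proposition~\ref{producto} to bound each bidegree $\deg_{d-i,d+i}$ of $\mathbb{T}V_{\mathrm{cone}}$ by its smooth-case value. That bound should come from upper semicontinuity of bidegrees in flat families, i.e. the same flatness argument as in Lemma~\ref{Lema defomracion}, now applied to $\overline{Z}'$ inside $\mathbb P^n\times\mathbb P^{n}\times\mathbb A^1$.

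\emph{Step 3 (sharpness, item (1)).} Take a generic plane curve $\CC\subseteq\mathbb A^2$ of degree $D$, which is smooth with smooth projective closure, and set
\[
V_{D,d}:=\CC\times\mathbb A^{d-1}\subseteq\mathbb A^{d+1}\subseteq\mathbb A^n,
\]
which is possible because $d+1\le n$. It is smooth and irreducible, of dimension $d$ and degree $D$, and
\[
T V_{D,d}\cong T\CC\times\mathbb A^{2(d-1)}
\]
after a linear change of coordinates. Hence $\deg(TV_{D,d})=\deg(T\CC)=D^2$, where the last equality holds by \cite[Corollary~32]{jeronimo2025geometric}. One can also check it directly via Proposition~\ref{thm}: $2\cdot 1>2$ fails, so the case $n=2$ for the curve gives $\deg(\mathbb{T}\CC_{\mathrm{cone}})=D^2$.
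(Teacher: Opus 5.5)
\textbf{There is a genuine gap in Step 2.} Your Step 1 is the paper's proof of item (2), and Step 3 is essentially its proof of item (1). Step 2, however, is the whole content of the main inequality when $X$ is singular, and it is not carried out: the smoothing family is never constructed. Such a family does not exist in general.

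Here is a counterexample to its existence. Let $S\subseteq\mathbb P^5$ be the Segre embedding of $\mathbb P^1\times\mathbb P^2$, and let $X\subseteq\mathbb P^6$ be the projective cone over $S$ with vertex $v$. Take a general hyperplane through $v$ as hyperplane at infinity. Then $V=X\cap\mathbb A^6\cong(S\setminus H')\times\mathbb A^1$ is a smooth affine $4$-fold of degree $3$ whose closure spans $\mathbb P^6$.

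Suppose a family as in your Step 2 existed. The special fiber of its closure contains $X$, and limits of linear spans contain limits of the varieties. Hence for general $s$ the $4$-fold $\overline{V_s}$ of degree $3$ spans a $\mathbb P^6$; it cannot span more, since $\deg\geq\operatorname{codim}+1$ in the span. So $\overline{V_s}$ has minimal degree. By the classification of varieties of minimal degree, it is a rational normal scroll $S(a_1,\dots,a_4)$ with $a_1+\dots+a_4=3$. Some $a_i$ must then be $0$, so it is a cone, hence singular.

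Your fallback via semicontinuity of the bidegrees of $\overline Z'$ hits the same obstruction. It still needs a degeneration from varieties with smooth closure, and Proposition \ref{thm} (through the ceti and Severi's double point formula) is only available in the smooth case.

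\textbf{The missing idea.} The paper never smooths $X$; it degenerates $V\times V$ onto $TV$, which is the deformation to the normal cone of the diagonal.
\begin{itemize}
\item Since $V$ is smooth, it is locally a complete intersection. So on a principal open $U=\{h\neq0\}$ its ideal is generated by some $f_1,\dots,f_{n-d}$.
\item Let $\mathcal I\subseteq\mathbb A^{2n}\times\mathbb A^1$ be cut out by $f_i(\mathbf x)=0$, $\bigl(f_i(\mathbf x+t\mathbf y)-f_i(\mathbf x)\bigr)/t=0$, and $h(\mathbf x)h(\mathbf x+t\mathbf y)\neq0$.
\item Its fiber over $t=0$ is $TU$.
\item For $t\neq0$, its fiber is linearly isomorphic to $U\times U$ via $(\mathbf x,\mathbf y)\mapsto(\mathbf x,\mathbf x+t\mathbf y)$.
\item A dimension count ($2(n-d)$ equations) shows $\mathcal I$ is irreducible.
\item Lemma \ref{Lema defomracion} then gives $\deg(TV)=\deg(TU)\leq\deg(U\times U)=\deg(V)^2$.
\end{itemize}
This uses only the smoothness of $V$, so no hypothesis at infinity is needed.

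\textbf{A minor point in Step 3.} For a plane curve, $2d\leq n$ does hold. Proposition \ref{thm} gives $\deg(\mathbb T\mathcal C_{\mathrm{cone}})=D^2$ because $\mu(X)=0$, since $\operatorname{Sec}(X)=\mathbb P^2$. By your Step 1 this is only an upper bound for $\deg(T\mathcal C)$. The equality needs your citation, or Theorem \ref{teorema formula}: for a generic plane curve, $\operatorname{Tan}_0(\mathcal C)=\mathbb A^2$ and $\xi_1(T\mathcal C)=D(D-1)$, so $\deg(T\mathcal C)=D+D(D-1)=D^2$.
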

\begin{proof}

Fix $d=\dim(V)$ and an arbitrary point $p\in V$. Since $V$ is smooth, it is locally a complete intersection around $p$. Hence there exist a nonempty principal Zariski open subset $U:=\{h\neq0\}\subseteq V$ and polynomials $f_1,\ldots,f_{n-d}\in I(V)$ such that $p\in U$, the ideal $I(V)\mathcal O_q\subseteq\mathcal O_q$ is generated by $f_1,\ldots,f_{n-d}$ for every $q\in U$, and
\begin{equation*}
    TU = \{(\mathbf{x},\mathbf{y}) \in \mathbb{A}^{2n} \mid h(\mathbf{x}) \neq 0,  \text{ and } f_i(\mathbf{x})= \nabla f_i \cdot \mathbf{y} = 0, \quad 1 \leq i \leq n-d\}.
\end{equation*}

Define the incidence variety $\mathcal I\subseteq\mathbb A^{2n}\times\mathbb A^1$ by
\[
\mathcal I :=
\left\{
(\mathbf{x},\mathbf{y},t)\in\mathbb{A}^{2n}\times\mathbb{A}^1
\;\middle|\;
\begin{array}{@{}l@{}}
    h(\mathbf{x})\cdot h(\mathbf{x}+t\mathbf{y})\neq 0
    \\[2mm]
    \displaystyle
    f_i(\mathbf{x})
    =
    G_i(\mathbf{x},\mathbf{y},t)
    =
    \frac{f_i(\mathbf{x}+t\mathbf{y})-f_i(\mathbf{x})}{t}
    =0,
    \quad 1\leq i\leq n-d
\end{array}
\right\}.
\]
Here $G_i\in K[\mathbf{x},\mathbf{y},t]$, because expansion of the numerator shows that every term is divisible by $t$. Let
$$\vartheta:\mathcal{I} \rightarrow \mathbb{A}^1, \qquad \vartheta(\mathbf{x},\mathbf{y},t) = t.$$

We first study its fibers. For $t=0$, the fiber consists of the points $(\mathbf{x},\mathbf{y},0)\in\mathcal I$ satisfying
\[
h(\mathbf{x})\neq 0,\qquad
f_i(\mathbf{x})=0,\qquad
\nabla f_i(\mathbf{x})\cdot\mathbf{y}=0,
\]
because $G_i(\mathbf{x},\mathbf{y},0)=\nabla f_i(\mathbf{x})\cdot\mathbf{y}$. Consequently,
$$\vartheta^{-1}(0)=TU\times\{0\}.$$

If $t\neq0$, the equalities $f_i(\mathbf{x})=G_i(\mathbf{x},\mathbf{y},t)=0$ are equivalent to $f_i(\mathbf{x})=f_i(\mathbf{x}+t\mathbf{y})=0$. The open condition also guarantees that both $\mathbf{x}$ and $\mathbf{x}+t\mathbf{y}$ belong to $U$.

Consider the linear morphism
\[
\Phi_t:\vartheta^{-1}(t)\longrightarrow U\times U,
\qquad
(\mathbf{x},\mathbf{y},t)
\longmapsto
(\mathbf{x},\mathbf{x}+t\mathbf{y})
\]
The map $\Phi_t$ is linear, with linear inverse $\displaystyle (\mathbf{x},\mathbf{z})
\longmapsto
\left(
\mathbf{x},
\frac{\mathbf{z}-\mathbf{x}}{t},
t
\right).$
Thus all fibers of $\vartheta$ are irreducible of dimension $2d$, and for $t\neq0$,
$\deg(\vartheta^{-1}(t))=\deg(U\times U\times\{t\})$.

Allowing $t$ to vary in the preceding maps gives an isomorphism
$$\mathcal{I}\cap \{t \neq 0\} \simeq U \times U \times\left ( \mathbb{A}^{1} \setminus \{0\} \right ).$$
Hence $\mathcal{I}\cap\{t\neq0\}$ is a nonempty irreducible open subset of $\mathcal I$ of dimension $2d+1$. Its closure in $\mathcal I$ is therefore an irreducible component. Any other component would have to lie in the hyperplane $\{t=0\}$. But
$$\mathcal{I}\cap\{t=0\}=\vartheta^{-1}(0)=TU\times\{0\},$$
so such a component would have dimension $2d$. This is impossible because $\mathcal I$ is defined by $2(n-d)$ equations, and every component must therefore have dimension at least $2d+1$. Thus $\mathcal I$ is irreducible.

Applying Lemma \ref{Lema defomracion} to the irreducible quasi-affine variety $\mathcal I$ and the map $\vartheta$, we obtain for general $t\in\mathbb A^1\setminus\{0\}$:
\begin{align*}
    \deg(TV) = \deg(TU \times \{0\}) = \deg(\vartheta^{-1}(0))  \leq  \deg(\vartheta^{-1}(t)) & = \deg(U \times U \times \{t\}) \\ & = \deg(U)^2 = \deg(V)^2,
\end{align*}
where $\deg(U \times U \times \{0\}) = \deg(U)^2$ follows from \cite[Proposition 2]{Heintz1983}. This proves the main assertion. Finally settle the remaining conditions independently:
\begin{enumerate}
    \item By \cite[Example~27]{jeronimo2025geometric}, there exists a smooth
irreducible plane curve \(C_r\subseteq\mathbb A^2\) such that
\[
\deg(C_r)=r
\qquad\text{and}\qquad
\deg(TC_r)=r^2.
\]
One may take \(C_r=Z(x_1^r+x_2^r-1)\), since this curve is linearly
isomorphic to those considered in that example. Set $\widetilde V(d,r):=C_r\times\mathbb A^{d-1}
   \subseteq\mathbb A^{d+1}.$
Then \(\widetilde V(d,r)\) is smooth and irreducible of dimension \(d\),
and
\[
T\widetilde V(d,r)=TC_r\times\mathbb A^{2d-2}.
\]
Since taking a product with an affine space does not change geometric
degree,
\[
\deg\widetilde V(d,r)=r,
\qquad
\deg\bigl(T\widetilde V(d,r)\bigr)=r^2.
\]
Finally, if \(d+1<n\), we embed \(\mathbb A^{d+1}\) linearly into
\(\mathbb A^n\). This preserves both degrees and gives the required
variety \(V(d,r)\). 

\item Let \(H\subseteq\mathbb A^{n+1}\) be the hyperplane defined by
\(\lambda=0\). By construction,
\[
\mathbb{T}V_{\mathrm{cone}}\cap(V\times H)
=
\left\{
\bigl(p,(0,v)\bigr)
\ \middle|\
p\in V,\ v\in T_pV
\right\}
\simeq TV.
\]
Thus \(TV\) is linearly isomorphic to a hyperplane section of
\(\mathbb{T}V_{\mathrm{cone}}\), and Bézout's inequality gives
\[
    \deg(TV)\leq\deg(\mathbb{T}V_{\mathrm{cone}}),
\]
and the result follows straightforwardly from Proposition \ref{thm}. \qedhere
\end{enumerate}
\end{proof}
\begin{remark}
The proof of the quadratic bound in the general case relies
on the deformation argument of Lemma~\ref{Lema defomracion}.
We expect that an alternative proof could be obtained by
extending Proposition~\ref{proposetomultig} to varieties with
singular projective closure and applying a suitable singular
version of Severi's double point formula
(see~\cite{Severi1902,FultonLaksov1977,Catanese1979,CataneseOguiso2020}).
Such an approach could also yield sharper bounds through
nonnegative correction terms accounting for the singularities
of the projective closure.
\end{remark}

\begin{remark}
    The deformation considered in the proof of Theorem \ref{teorema cuadrado} is inspired by the deformation to the normal cone. This produces a family whose general fiber is isomorphic to a fixed variety and whose special fiber is the corresponding normal cone (see \cite[Chapter~4, \S4.1]{fulton1998intersection}). Applied to the diagonal of a smooth variety, the normal cone identifies with its tangent bundle.
\end{remark}

We now proceed to deduce a Corollary for the degree of the secant and tangential varieties:

\begin{corollary}\label{cor:tangential-degree-bounds}
    Let $X \subseteq \mathbb{P}^n$ be a smooth non degenerate projective variety of dimension $d$ and $V:= X\cap \{x_0 \neq 0\} \subseteq \mathbb{A}^n$. Let $s:=\min\{\dim(V),\operatorname{codim}(V)\}-\Delta(\mathbb{T}V_{\operatorname{cone}})$. 
    \begin{enumerate}
        \item If $X$ is secant defective then:
        $$\deg(\operatorname{Sec}(X)) = \deg(\operatorname{Tan}(X)) \leq \frac{\deg(X)(\deg(X)-1)}{\xi_{d-s+1}(\mathbb{T}V_{\operatorname{cone}})}.$$
        \item If $X$ is not secant defective and $2\dim(V) \leq n$:
         $$2\mu(X) \deg(\operatorname{Sec}(X)) + \xi_{d-s+1}(\mathbb{T}V_{\operatorname{cone}}) \deg(\operatorname{Tan}(X)) \leq \deg(X)(\deg(X)-1).  $$
    \end{enumerate}
\end{corollary}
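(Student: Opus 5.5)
The plan is to combine the degree formula of Theorem \ref{teorema formula} applied to $E=\mathbb{T}V_{\operatorname{cone}}$ with Proposition \ref{thm}. Here the rank is $r=d+1$, the ambient fibre space is $\mathbb A^{n+1}$ so $m=n+1$, and $\mathscr D(\mathbb{T}V_{\operatorname{cone}})=C(\Tan(X))$ by Example \ref{ejemplo fibrados}. Then $\deg\mathscr D=\deg\Tan(X)$, and $\min\{d,m-r\}=\min\{d,n-d\}$, which matches the $s$ of the statement.

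All terms in Theorem \ref{teorema formula} are nonnegative. Dropping the sum over $1\le a\le s-1$ therefore gives
\[
\deg(\mathbb{T}V_{\operatorname{cone}})\geq \deg(X)+\xi_{d-s+1}(\mathbb{T}V_{\operatorname{cone}})\deg(\Tan(X)).
\]

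\textbf{Case (2).} Here $X$ is not secant defective and $2d\le n$. Proposition \ref{thm} gives $\deg(\mathbb{T}V_{\operatorname{cone}})=\deg(X)^2-2\mu(X)\deg(\operatorname{Sec}(X))$. Substituting into the inequality above and rearranging yields exactly (2).

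\textbf{Case (1).} Here $X$ is secant defective. By Theorem \ref{Teorema Zak}, $\Tan(X)=\operatorname{Sec}(X)$, so the degree equality in (1) is immediate. Secant defectivity means $\dim\operatorname{Sec}(X)<2d+1$, so $\mu(X)=0$ by Definition \ref{def:secantdegree}.

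If $2d\le n$, Proposition \ref{thm} gives $\deg(\mathbb{T}V_{\operatorname{cone}})=\deg(X)^2$. If $2d>n$, the same proposition gives $\deg(X)^2$ directly.

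The displayed inequality then gives $\xi_{d-s+1}\deg(\Tan(X))\le \deg(X)^2-\deg(X)$, and dividing by $\xi_{d-s+1}$ gives (1).

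\textbf{Main obstacle.} The delicate point is making sure $\xi_{d-s+1}(\mathbb{T}V_{\operatorname{cone}})>0$, so that the division in (1) is legitimate.

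The case $s=0$ would make $\mathbb{T}V_{\operatorname{cone}}$ trivially embedded by Lemma \ref{lema 7}. Then all projective tangent spaces coincide, which forces $X$ to be a linear space; this contradicts nondegeneracy unless $X=\mathbb P^n$, which is excluded since $X\subsetneq\mathbb P^n$ in the relevant definitions. So $s\ge1$.

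For $s\geq 1$, Proposition \ref{prop:recovery-successive-sections} says the bidegree $b_s$ at $a=s$ is nonzero. Since $b_s=\xi_{d-s+1}\deg\mathscr D(E_s)$ and $\mathscr D(E_s)=\mathscr D(E)$ there, we get $\xi_{d-s+1}\ne0$. I would also check the convention $\xi_{d-s+1}$ versus the $a=s$ term in Theorem \ref{teorema formula} to confirm it is precisely the coefficient of $\deg(\Tan(X))$.
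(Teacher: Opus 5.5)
Your proposal is correct and follows the paper's own argument. The paper's proof combines three ingredients: Theorem~\ref{teorema formula} applied to $\mathbb{T}V_{\operatorname{cone}}$ with the nonnegative middle sum dropped, Zak's dichotomy (Theorem~\ref{Teorema Zak}) to obtain $\operatorname{Tan}(X)=\operatorname{Sec}(X)$ and $\mu(X)=0$ in the defective case, and Proposition~\ref{thm} for $\deg(\mathbb{T}V_{\operatorname{cone}})$. Your additional check that $\xi_{d-s+1}(\mathbb{T}V_{\operatorname{cone}})>0$ is sound and fills in a detail the paper leaves implicit: Lemma~\ref{lema 7} rules out $s=0$, and Proposition~\ref{prop:recovery-successive-sections} handles $s\geq 1$.
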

\begin{proof}
    The result follows combining Theorems \ref{teorema formula} and \ref{Teorema Zak} with Proposition \ref{thm}.
\end{proof}

Combining Theorem \ref{teorema cuadrado} with Theorem \ref{teorema formula} in the special case $d=1$ and $E = TV$ yields a sharper version of \cite[Proposition 18]{jeronimo2025geometric} for curves, giving a geometric bound on coincident tangent spaces.

\begin{corollary}\label{corolario: curvas estiamcion omega}
    Let $\CC \subseteq \mathbb{A}^n$ be a smooth irreducible algebraic curve. Then
$$\omega(\CC) \deg(\operatorname{Tan}_0(\CC)) \leq \deg(\CC)(\deg(\CC)-1).\qed$$ 
\end{corollary}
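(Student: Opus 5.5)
We apply Theorem~\ref{teorema formula} to $E=T\CC$ with $d=1$ and $r=1$, and combine the result with the quadratic bound of Theorem~\ref{teorema cuadrado}.

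\emph{The formula.} Set $s:=\min\{1,n-1\}-\Delta(T\CC)\in\{0,1\}$. The sum over $1\leq a\leq s-1$ is empty, so Theorem~\ref{teorema formula} reads
\[
\deg(T\CC)=\deg(\CC)+\xi_{2-s}(T\CC)\deg\bigl(\operatorname{Tan}_0(\CC)\bigr),
\]
with the convention $\xi_2(T\CC)=0$ when $s=0$.

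\emph{Case $s=1$.} Here $\xi_{2-s}=\xi_1(T\CC)$, and Observation~\ref{ejemplo TV} identifies this with $\omega(\CC)$. This gives
\[
\deg(T\CC)=\deg(\CC)+\omega(\CC)\deg\bigl(\operatorname{Tan}_0(\CC)\bigr).
\]

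\emph{Case $s=0$.} Then $\Delta(T\CC)=\min\{1,n-1\}$. By Lemma~\ref{lema 7}, $T\CC$ is trivially embedded, so $\dim\operatorname{Tan}_0(\CC)=1<2$ whenever $n\geq 2$. The projection $\pi_2$ is then not generically finite, so $\xi_1(T\CC)=0$ by Remark~\ref{rmk: w(E)}. Whether $\omega(\CC)$ agrees with $\xi_1$ in this degenerate situation depends on Definition~17 of \cite{jeronimo2025geometric}; I would take Observation~\ref{ejemplo TV} at face value, so $\omega(\CC)=0$ and the left-hand side of the inequality vanishes. The case $n=1$ is trivial, since then $\CC=\mathbb A^1$.

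\emph{Conclusion.} In all cases $\omega(\CC)\deg(\operatorname{Tan}_0(\CC))=\deg(T\CC)-\deg(\CC)$. Theorem~\ref{teorema cuadrado} gives $\deg(T\CC)\leq\deg(\CC)^2$, and subtracting $\deg(\CC)$ yields
\[
\omega(\CC)\deg\bigl(\operatorname{Tan}_0(\CC)\bigr)\leq\deg(\CC)\bigl(\deg(\CC)-1\bigr).
\]

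The only delicate point is the degenerate case $s=0$, namely making sure that $\omega(\CC)$ and $\xi_1(T\CC)$ agree there. If they did not, the inequality would still hold, because $\deg(\CC)=1$ forces $\CC$ to be a line, and then $\deg(\CC)(\deg(\CC)-1)=0$. For that we would need to check that $\omega$ vanishes by definition for lines, or that degenerate trivially embedded curves other than lines cannot occur. That does hold: a curve all of whose tangent lines share a single direction is contained in a line.
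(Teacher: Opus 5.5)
Your argument matches the paper's proof: apply Theorem~\ref{teorema formula} to $T\CC$ with $d=1$, identify $\xi_1(T\CC)$ with $\omega(\CC)$ via Observation~\ref{ejemplo TV}, and subtract $\deg(\CC)$ from the bound $\deg(T\CC)\leq\deg(\CC)^2$ of Theorem~\ref{teorema cuadrado}. Your closing fallback paragraph is muddled, though: for a line the right-hand side is $0$, so the inequality holds only because $\omega(\CC)=\xi_1(T\CC)=0$ there, which the paper also takes from Observation~\ref{ejemplo TV}, and it would fail if the two invariants disagreed.
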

\section{Applications to Prolongation Varieties}

Let \(K\) be an algebraically closed field of characteristic zero, as before, but now endow \(K\) with a derivation \(\partial\). For instance, one may consider \(\mathbb{C}\) equipped with the trivial derivation \(\partial\equiv 0\). We will then say that \(K\) is a \emph{differential field}. For a differential field \(K\), one can define the ring of differential polynomials
\[
K\{\mathbf{x}\}:=K[\mathbf{x}^{(i)}\mid i\in\mathbb{N}_0],
\]
where each derivative of a differential variable \(\mathbf{x}=(x_1,\ldots,x_n)\) is regarded as a new variable in the polynomial ring. The derivation \(\partial\) naturally extends to \(K\{\mathbf{x}\}\) by setting
\[
\partial(x_i^{(j)})=x_i^{(j+1)},
\]
and, in particular, \(\partial(x_i)=x_i^{(1)}=:x_i'\). We will use \(\mathbf{x}^{(\leq k)}\) to denote the variables \(\mathbf{x}\) together with all their derivatives up to order \(k\).

\medskip

A \emph{differential ideal} is an ideal of the ring \(K\{\mathbf{x}\}\) that is closed under derivation. For a subset \(J\subseteq K\{\mathbf{x}\}\), we denote by \([J]\) the smallest differential ideal containing \(J\). Notice that a system of algebraic differential equations, that is, a system of differential equations whose equations can be written as polynomials
\[
f_i(\mathbf{x}^{(\leq k)}),
\]
can naturally be encoded by the differential ideal of \(K\{\mathbf{x}\}\) generated by these polynomials. For more background on differential algebra, we refer the reader to \cite{ritt1950differential,kolchin1973differential}.

\subsection{Prolongations and Tangent Bundles} \ \medskip

We begin by defining the prolongation variety of a system of algebraic differential equations of order $k$. 

\begin{definition}\label{definicion prolongacion}
    Let $r,k \in \mathbb{N}$ and let $\Sigma:= \{f_1(\mathbf{x}^{(\leq k)})=0,\ldots,f_r(\mathbf{x}^{(\leq k)})=0\}$ be a system of algebraic differential equations, where each $f_i\in K\{\mathbf{x}\}$ has order at most $k$.
    \begin{itemize}
        \item We define the \emph{prolongation of $\Sigma$} as the system of algebraic differential equations
    \begin{equation*}
        \operatorname{Prol}(\Sigma):= \left  \{f_i(\mathbf{x}^{(\leq k)})= f_i^\partial+ \sum_{j=0}^k \nabla_{\mathbf{x}^{(j)}} f_i \cdot \mathbf{x}^{(j+1)} = 0 \ \forall 1 \leq i \leq r \right \},
    \end{equation*}
where, $f_i^\partial$ is simply applying the derivation $\partial$ to the coefficients of $f$ and $\nabla_{\mathbf{x}^{(j)}} f_i \cdot \mathbf{x}^{(j+1)}$ denotes the usual Euclidean inner product between the gradient of $f_i$ with respect to $\mathbf{x}^{(j)}$ and the vector of derivatives $\mathbf{x}^{(j+1)}$.
        \item The \emph{prolongation variety of $\Sigma$} is the algebraic variety $Z(\operatorname{Prol}(\Sigma)) \subseteq \mathbb{A}^{n \times(k+2)}$.
    \end{itemize}
\end{definition}

The systems $[\Sigma]$ and $[\operatorname{Prol}(\Sigma)]$ generate the same differential ideal and therefore have the same solution sets as systems of differential equations.

\begin{example}
  Prolongation varieties are useful for describing the constraints that every solution, including its initial conditions, must satisfy. To illustrate this, we reproduce a simple example from \cite[p.~11]{AscherPetzold1998}. Take $K$ as the algebraic closure of $\mathbb{C}(t)$ with $\partial:= \frac{d}{dt}$. Consider the system
$$
\Sigma:=\begin{cases}
x_1'-x_2=0,\\
x_1-t=0
\end{cases}
.$$
Prolonging both equations once gives
$$
\operatorname{Prol}(\Sigma):=\begin{cases}
x_1'-x_2=0,& \ x_1''-x_2'=0,\\
x_1-t=0, & \ x_1'-1=0.
\end{cases}
$$
Combining the equation $(x_1'-x_2=0)$ with the prolonged equation $(x_1'-1=0)$ yields the additional constraint $(x_2=1)$. Thus, at $t=0$, every solution must satisfy the initial conditions
$$x_1(0)=0
\qquad\text{and}\qquad
x_2(0)=1.$$  
\end{example}

\begin{remark}
    For an ODE system, that is, a system of the form
\[
\mathbf{x}'=\mathbf{f}(\mathbf{x}) \quad \text{with } n=r,
\]
existence and uniqueness theorems are available for solutions with a prescribed initial condition. This is usually known as the Cauchy principle (see \cite{JMPA_1893_4_9__217_0}).

For a system of algebraic differential equations, by contrast, the existence of solutions is less transparent, and some systems do not admit solutions. At least locally, the issue is closely tied to the implicit function theorem: one would like to solve for the highest-order derivatives in terms of the remaining variables and then apply a suitably adapted Cauchy principle. It is easy to exhibit DAE systems that cannot initially be treated in this way but acquire the required form after a finite number of prolongations. This idea appears in Cartan's pioneering work \cite{cartan1945systemes} and was subsequently refined and corrected by Kuranishi \cite{kuranishi1957}.
\end{remark}

For simplicity assume $K$ is a field of constants, that is $\partial(K) = 0$. Although little can be said about the dimension of prolongation varieties in full generality, an exact value can be obtained under rather restrictive regularity assumptions.

{\begin{proposition}
    Let $\Sigma:= \{f_1(\mathbf{x}^{(\leq k)})=0,\ldots,f_r(\mathbf{x}^{(\leq k)})=0\}$ be a system of differential equations, each of order $k$, such that $Z(\Sigma) \subseteq \mathbb{A}^{n \times (k+1)}$ is irreducible of dimension $d$ and the following conditions hold:
    \begin{itemize}
        \item The projection $\vartheta:Z(\operatorname{Prol}(\Sigma)) \rightarrow Z(\Sigma)$ is surjective.
        \item The matrix $\left( \displaystyle \frac{\partial f_i}{\partial x_j^{(k)}} \right)_{i,j}$ has constant rank $\rho$ on $Z(\Sigma)$.
    \end{itemize}
    Then $Z(\operatorname{Prol}(\Sigma))$ is equidimensional of dimension $d+n-\rho$. Moreover, if $Z(\Sigma)$ is smooth, then $Z(\operatorname{Prol}(\Sigma))$ is smooth and irreducible.
\end{proposition}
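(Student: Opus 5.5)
The plan is to view $Z(\operatorname{Prol}(\Sigma))$ as an affine-linear bundle over $Z(\Sigma)$ and read off dimension and smoothness fiberwise. Since $\partial(K)=0$, we have $f_i^\partial=0$. Hence the prolonged equations are the original $f_i(\mathbf{x}^{(\leq k)})=0$ together with
\[
\sum_{j=0}^{k-1}\nabla_{\mathbf{x}^{(j)}}f_i\cdot\mathbf{x}^{(j+1)}+\nabla_{\mathbf{x}^{(k)}}f_i\cdot\mathbf{x}^{(k+1)}=0 .
\]
These are linear in the new variables $\mathbf{x}^{(k+1)}$ with coefficient matrix $A=\bigl(\partial f_i/\partial x_j^{(k)}\bigr)$. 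Fix $P=(\mathbf{x}^{(\leq k)})\in Z(\Sigma)$. The fiber $\vartheta^{-1}(P)$ is the solution set in $\mathbb A^n$ of an affine system $A(P)\mathbf{y}=b(P)$. By the surjectivity hypothesis it is nonempty, so it is an affine subspace of dimension $n-\rho$ for every $P$.

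First I show local triviality. Near any $P_0$, I pick a nonvanishing $\rho\times\rho$ minor $h$ of $A$, exactly as in Observation~\ref{obs:localmente-trivial}. On $U=\{h\neq0\}\cap Z(\Sigma)$, constant rank $\rho$ implies that the remaining rows of $A$ are regular combinations of the chosen $\rho$ rows. Nonemptiness of the fibers forces the same relations on $b$, so over $U$ the system is equivalent to the $\rho$ chosen equations. Cramer's rule then solves $\rho$ of the $\mathbf{x}^{(k+1)}$ coordinates as regular functions of $P$ and the other $n-\rho$ coordinates, which gives an isomorphism
\[
\vartheta^{-1}(U)\simeq U\times\mathbb A^{n-\rho}
\]
over $U$. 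Consequently each irreducible component of $\vartheta^{-1}(U)$ is $U_i\times\mathbb A^{n-\rho}$, where $U_i$ is a component of $U$. Since $Z(\Sigma)$ is irreducible of dimension $d$, every such piece has dimension $d+n-\rho$.

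To globalize, I argue as in Proposition~\ref{proposición completa}. The sets $\vartheta^{-1}(U)$ form an open cover of $Z(\operatorname{Prol}(\Sigma))$, and each is equidimensional of dimension $d+n-\rho$. A variety covered by open sets equidimensional of a fixed dimension is itself equidimensional of that dimension, which gives the first claim. The step I expect to need the most care is the consistency of $b$ with the rank relations. Surjectivity guarantees a solution at each point, but it does not immediately make the other equations regular consequences of the chosen $\rho$. The right argument is that, because $A$ has rank $\rho$ with the chosen rows independent, consistency at each point says the augmented matrix $(A\,|\,b)$ also has rank $\rho$ there. Its rows are then linear combinations of the chosen ones, with coefficients given by Cramer's rule and regular on $U$.

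For the smooth case, if $Z(\Sigma)$ is smooth then each local product $U\times\mathbb A^{n-\rho}$ is smooth, so $Z(\operatorname{Prol}(\Sigma))$ is smooth. Irreducibility then follows verbatim from the proof of Proposition~\ref{proposición completa}. The components are pairwise disjoint, since a trivializing open set is irreducible and meets only one component. Each irreducible fiber $\vartheta^{-1}(P)$ lies in a unique component. Each component dominates $Z(\Sigma)$ by dimension count, and Chevalley's theorem then forces there to be a single component.
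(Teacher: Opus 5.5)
Your proof is correct, and it takes a genuinely different route from the paper, which gives no argument of its own and simply defers to \cite[Proposition 1.4.9(i)]{Yudilevich2016}. That is a result from the jet-bundle theory of PDE prolongations, and the citation leaves its transfer to varieties over an arbitrary algebraically closed field implicit.

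You instead prove the statement inside the paper's own framework:
\begin{enumerate}
\item You adapt Observation~\ref{obs:localmente-trivial} to the inhomogeneous system $A(P)\mathbf y=b(P)$.
\item You use surjectivity, via the Rouch\'e--Capelli criterion, to discard the non-pivot rows on $\{h\neq 0\}$.
\item You obtain $\vartheta^{-1}(U)\simeq U\times\mathbb A^{n-\rho}$ and globalize as in Proposition~\ref{proposición completa}.
\end{enumerate}
This makes the role of each hypothesis transparent. Surjectivity gives nonempty fibers and makes the inhomogeneous rows redundant; constant rank gives constant fiber dimension and the local trivialization.

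Your argument also shows that smoothness of $Z(\Sigma)$ is needed only for the smoothness conclusion. The trivializing opens $\vartheta^{-1}(U)$ are irreducible and pairwise intersect, because nonempty opens of the irreducible $Z(\Sigma)$ meet and $\vartheta$ is surjective. Hence $Z(\operatorname{Prol}(\Sigma))$ is irreducible under the first two hypotheses alone.

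A small simplification: you do not need the coefficients expressing the remaining rows through the pivot rows to be regular. Pointwise equivalence of the two systems over $U$ already identifies $\vartheta^{-1}(U)$ with the image of the regular Cramer-rule map $U\times\mathbb A^{n-\rho}\to\vartheta^{-1}(U)$. Its inverse is the projection onto the free coordinates.

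What the paper's citation buys is brevity and a link to the standard Cartan--Kuranishi prolongation theory, which is also formulated for several independent variables. What your route buys is a self-contained, purely algebraic argument in the setting where the proposition is actually used.
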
}
\begin{proof}
    See \cite[Proposition 1.4.9(i)]{Yudilevich2016}
\end{proof}

\begin{remark}\label{ejemplo prolongado}
For an ODE system $\Sigma:=\{\mathbf{x}'=f(\mathbf{x})\}$, the hypotheses of the preceding proposition hold automatically. Indeed, the Jacobian matrix with respect to the highest-order variables is the identity and hence has constant rank $n$. Consequently, every prolongation variety is isomorphic to the preceding one; in particular, all prolongation varieties have the same dimension.
\end{remark}

\subsection{Degree bounds} \ \medskip

As mentioned above, determining the dimension, degree, irreducibility, or even the number of irreducible components of a general prolongation variety remains largely open. In this subsection we combine the results of Subsection \ref{deg estiamtes} to obtain information about these invariants. In particular, we derive criteria guaranteeing the expected dimension and other geometric properties under successive prolongations.

\begin{notation}
    Let $r,k \in \mathbb{N}$ and let $\Sigma:= \{f_1(\mathbf{x}^{(\leq k)})=0,\ldots,f_r(\mathbf{x}^{(\leq k)})=0\}$ be a system of differential equations such that $Z(\Sigma)$ is smooth, irreducible and the ideal generated by the equations of $\Sigma$ is prime in $K[\mathbf{x}^{(\leq k)}]$. We use $T\Sigma$ to denote the tangent bundle of $Z(\Sigma)$, viewed as an algebraic variety in $\mathbb{A}^{2(n\times(k+1))}$.
\end{notation}

The intersection $T\Sigma \cap \{\mathbf{x}'=\mathbf{y}_1,\ldots,\mathbf{x}^{(k)}=\mathbf{y}_k\}$ is linearly isomorphic to $Z(\operatorname{Prol}(\Sigma))$. Bézout's inequality (see Proposition \ref{prop:bezout}) therefore gives
$$\deg(Z(\operatorname{Prol}(\Sigma))) \leq \deg(T\Sigma).$$

Applying Theorem \ref{teorema cuadrado}, we obtain the following result.

\begin{proposition}\label{prop:cota}
    Let $r,k \in \mathbb{N}$ and let $\Sigma:= \{f_1(\mathbf{x}^{(\leq k)})=0,\ldots,f_r(\mathbf{x}^{(\leq k)})=0\}$ be a system of differential equations such that $Z(\Sigma)$ is smooth, irreducible, and the ideal generated by the equations of $\Sigma$ is prime in
$K[\mathbf{x}^{(\leq k)}]$. Then
    $$\deg(Z(\operatorname{Prol}(\Sigma))) \leq  \deg(Z(\Sigma))^2.$$
\end{proposition}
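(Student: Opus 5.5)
The plan is to chain the two inequalities already available: the Bézout comparison between the prolongation variety and the tangent bundle $T\Sigma$ stated just before the proposition, and the quadratic bound of Theorem~\ref{teorema cuadrado} applied to $V:=Z(\Sigma)\subseteq\mathbb A^{n(k+1)}$.

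First I will make the identification of $Z(\operatorname{Prol}(\Sigma))$ with a linear section of $T\Sigma$ precise. Since $K$ is a field of constants, $f_i^\partial=0$, so the prolonged equations read $f_i=0$ and $\sum_{j=0}^k\nabla_{\mathbf x^{(j)}}f_i\cdot\mathbf x^{(j+1)}=0$. Because the ideal generated by $f_1,\dots,f_r$ is prime and $Z(\Sigma)$ is irreducible, the Nullstellensatz gives $I(Z(\Sigma))=\langle f_1,\dots,f_r\rangle$. Hence the Zariski tangent space at every point is cut out by the gradients of the $f_i$ alone, and
\[
T\Sigma=\{(\mathbf p,\mathbf y)\mid \mathbf p\in Z(\Sigma),\ \nabla f_i(\mathbf p)\cdot\mathbf y=0,\ 1\leq i\leq r\},
\]
where $\mathbf y=(\mathbf y_0,\dots,\mathbf y_k)$. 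Intersecting with the linear space $\{\mathbf y_j=\mathbf x^{(j+1)},\ 0\leq j\leq k\}$ (in the paper's indexing, $\mathbf y_0$ corresponds to $\mathbf x'$) and projecting away the redundant coordinates gives a linear isomorphism onto $Z(\operatorname{Prol}(\Sigma))\subseteq\mathbb A^{n(k+2)}$. Linear isomorphisms preserve degree, so Proposition~\ref{prop:bezout}, applied to $T\Sigma$ and a linear variety of degree $1$, yields $\deg Z(\operatorname{Prol}(\Sigma))\leq\deg(T\Sigma)$.

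Second, $Z(\Sigma)$ is a smooth irreducible affine variety and $T\Sigma$ is its tangent bundle naturally embedded in $\mathbb A^{2n(k+1)}$. Theorem~\ref{teorema cuadrado} then gives $\deg(T\Sigma)\leq\deg(Z(\Sigma))^2$, and combining the two inequalities proves the claim.

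The only delicate step is the first one: verifying that the equations of $\operatorname{Prol}(\Sigma)$ coincide with the tangent-bundle equations. This is exactly where the primality hypothesis is used. Without it, $\langle f_i\rangle$ might not be radical, or the gradients of the $f_i$ might not span the conormal space, and the prolongation could then be strictly larger than the tangent-bundle section. Note also that $Z(\operatorname{Prol}(\Sigma))$ need not be irreducible a priori; this causes no difficulty, because the Bézout inequality and the linear isomorphism hold with degree taken as the sum of the degrees of the components.
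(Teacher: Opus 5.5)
Your proposal is correct and follows the paper's own argument. You realize $Z(\operatorname{Prol}(\Sigma))$ as a linear section of $T\Sigma$, using primality to get $I(Z(\Sigma))=\langle f_1,\dots,f_r\rangle$ and the constant field to get $f_i^\partial=0$; then you apply the B\'ezout inequality of Proposition~\ref{prop:bezout} and conclude with Theorem~\ref{teorema cuadrado}. The only slip is notational: the linear constraints should be $\mathbf y_j=\mathbf x^{(j+1)}$ for $0\leq j\leq k-1$ only, with $\mathbf y_k$ simply relabeled as the new coordinate $\mathbf x^{(k+1)}$, because $\mathbf x^{(k+1)}$ is not a coordinate of $\mathbb A^{2n(k+1)}$.
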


These estimates apply to general dynamical systems. Their generality comes at the cost of sharpness, however, as the following example shows.

\begin{example}
    Consider
    $\Sigma:= \{
        x_1'=x_2^2, \ x_2'=x_1^{2}
    \}.$ It is immediate that $Z(\Sigma)$ is smooth and irreducible, with $\dim(Z(\Sigma))=2$ and $\deg(Z(\Sigma))=4$. Proposition~\ref{prop:cota} therefore gives
    $$\deg(Z(\operatorname{Prol}(\Sigma))) \leq \deg(Z(\Sigma))^2 = 16.$$
    On the other hand, standard computations show that $\deg(Z(\operatorname{Prol}(\Sigma)))=7$. Since this is a parametric setting, \cite[Theorem 25]{jeronimo2025geometric} shows that one should expect bounds linear in the degree rather than the quadratic bound obtained here.
\end{example}

\subsection*{Acknowledgements}  
This work was partially supported by Universidad de Buenos Aires (UBACYT 20020190100116BA), Argentina. The author is extremely grateful to his advisor and mentor Pablo Solernó for many stimulating discussions and valuable insights.

\bibliographystyle{abbrv}
\bibliography{bibliografiaTesis}{}
\end{document}